\newcommand{\disk}{\ensuremath{\mathbb{D}} } 
\newcommand{\riem}{\Sigma}  
\theoremstyle{plain}
        \newtheorem{theorem}{Theorem}[section]
        \newtheorem{lemma}[theorem]{Lemma}
        \newtheorem{proposition}[theorem]{Proposition}
        \newtheorem{corollary}[theorem]{Corollary}
\theoremstyle{definition}
        \newtheorem{definition}[theorem]{Definition}
\theoremstyle{remark}
    \newtheorem{remark}[theorem]{Remark}
\numberwithin{equation}{section} 
\numberwithin{figure}{section} 
\author{Eric Schippers}
\author{Wolfgang Staubach}
\begin{document}
\title{Plemelj-Sokhotski isomorphism for quasicircles in Riemann surfaces and the Schiffer operator}
\dedicatory{To the memory of our friend Peter C. Greiner}
\subjclass[2010]{58J05, 30F15.} 
\maketitle

\begin{abstract}
 Let $R$ be a compact Riemann surface and $\Gamma$ be a Jordan curve separating $R$ into connected components $\riem_1$ and $\riem_2$.  We consider Calder\'on-Zygmund type operators $T(\riem_1,\riem_k)$ taking the space of $L^2$ anti-holomorphic one-forms on $\riem_1$ to the space of $L^2$ holomorphic one-forms on $\riem_k$, which we call the Schiffer operators.  We extend results of Menahem M. Schiffer and others, which where confined to analytic Jordan curves $\Gamma$, to general quasicircles in a characterizing manner, and prove new identities for adjoints of the Schiffer operators.  Furthermore, we show that if $V$ is the space of anti-holomorphic one-forms orthogonal to $L^2$ forms on $R$ with respect to the inner product on $\riem_1$, then the Schiffer operator $T(\riem_1,\riem_2)$ is an isomorphism onto the set of exact one-forms on $\riem_2$.

 Using the relation between the Schiffer operator and a Cauchy-type integral involving Green's function, we also derive a jump decomposition (on arbitrary Riemann surfaces) for quasicircles and initial data which are boundary values of Dirichlet-bounded harmonic functions and satisfy the classical algebraic constraints.  In particular we show that the jump operator is an isomorphism on the subspace determined by these constraints.
\end{abstract}

\begin{section}{Introduction}
\begin{subsection}{Results and literature}

Let $\Gamma$ be a sufficiently regular curve separating a compact surface into two components $\riem_1$ and $\riem_2$. Given a sufficiently regular function $h$ on that curve, it is well known that there are holomorphic functions $h_k$ on $\riem_k$ such that
\[  h = h_2 - h_1 \]
if and only if $\int_\Gamma h \alpha =0$
for all holomorphic one forms on $R$.  In the plane, this is a consequence of the Plemelj-Sokhotski jump formula (which is a more precise formula in terms of a principal value integral).  The functions $h_k$ are obtained by integrating $h$ against the Cauchy kernel.

Different regularities of the curve and the function are possible.  In this paper, we show that the jump formula holds for quasicircles on compact Riemann surfaces, where the function $h$ is taken to be the boundary values of a harmonic function of bounded Dirichlet energy on either $\riem_1$ or $\riem_2$.  In the case that $\Gamma$ is analytic, this space agrees with the Sobolev $H^{1/2}$ space on $\Gamma$.  We showed in an earlier paper that the space of boundary values, for quasicircles, is the same for both $\riem_1$ and $\riem_2$, and the resulting map (which we call the transmission map) is bounded.

Since quasicircles are non-rectifiable, we replace the Cauchy integral by a limit of integrals along level curves of Green's function in $\riem_k$; for quasicircles, we show that this integral is the same whether one takes the limiting curves from within $\riem_1$ or $\riem_2$.  This relies on our transmission result mentioned above.  We also show that the map from the harmonic Dirichlet space $\mathcal{D}_{\mathrm{harm}}(\riem_k)$ to the direct sum of holomorphic Dirichlet spaces $\mathcal{D}(\riem_1) \oplus \mathcal{D}(\riem_2)$ obtained from the jump integral is an isomorphism.

We also consider a Calder\'on-Zygmund type integral operator on the space of one-forms which we call the Schiffer operator.   This was studied extensively by M. Schiffer and others in the plane and on Riemann surfaces (see Section \ref{se:attributions} for a discussion of the literature).  { Schiffer discovered deep relations between inequalities in function theory, potential theory and Fredholm eigenvalues, and properties of these operators.}   We extend many known results from analytic boundary to quasicircles.  We also derive some new identities for the adjoint of the Schiffer operator, and a complete set of identities relating the Schiffer operator to the jump integral in higher genus.   The derivative of the jump integral, when restricted to a finite co-dimension space of one-forms, equals the Schiffer operator.  We show that the restriction of the Schiffer operator to this finite co-dimensional space is an isomorphism.\\

In the case of simply-connected domains in the plane (where the finite co-dimensional space is the full space of one-forms), the fact that the Schiffer operator is an isomorphism is due to V. V. Napalkov and R. S. Yulmukhametov \cite{Nap_Yulm}.  In fact, they showed that it is an isomorphism precisely for domains bounded by quasicircles.  This is closely related to a result of Y. Shen \cite{ShenFaber}, who showed that the Faber operator of approximation theory is an isomorphism precisely for domains bounded by quasicircles.  Indeed, using Shen's result, the authors (at the time unaware of Napalkov and Yulmukhametov's result) derived a proof that the jump isomorphism and the Schiffer operator are isomorphisms precisely for quasicircles \cite{Schippers_Staubach_scattering}.  As mentioned above, here we generalize the jump and Schiffer isomorphism to Riemann surfaces separated by quasicircles.  We conjecture that the converse holds, as in the planar case; namely, if either of these is an isomorphism, then the separating curve is a quasicircle. \\

 We conclude with a few remarks on technical issues and related literature.
 The main hindrance to the solution of the Riemann boundary problem and the establishment of the jump decomposition is that quasicircles are highly irregular, and are not in general rectifiable.
 Riemann-Hilbert problems on non-rectifiable curves have been studied extensively by B. Kats, see e.g. \cite{Kats_2017} for the case of H\"older continuous boundary values, and the survey article \cite{Kats_survey} and references therein.  However the boundary values of Dirichlet bounded harmonic functions need not be H\"older continuous.  For Dirichlet spaces boundary values exist for quasicircles and the jump formula can be expressed in terms of certain limiting integrals.  A key tool here is our proof of the existence and boundedness of a transmission operator for harmonic functions in quasicircles \cite{Schippers_Staubach_general_transmission} (which, in the plane, also characterizes quasicircles \cite{Schippers_Staubach_transmission_sphere}).
 Indeed our approach to proving surjectivity of the Schiffer operator relies on the equality of the limiting integral from both sides.  { We have also found that the transmission operator has a clarifying effect on the theory as a whole.}

In this paper, approximation by functions which are analytic or harmonic on a neighbourhood of the closure plays an important role.  We rely on an approximation result of N. Askaripour and T. Barron \cite{AskBar} for $L^2$ $k$-differentials  on nested surfaces. Their result replaces the density of polynomials in the Bergman space of a Carath\'eodory domain used in proving the results in the case of the sphere. \\

\end{subsection}
\begin{subsection}{Outline of the paper}
  In Section \ref{se:preliminaries} we establish notation and state preliminary results.  We also outline previous results of the authors which are necessary here.   In Section \ref{Sec:Schiffer's comparison operators} we define the Schiffer operators, generalize known results to quasicircles, and establish some new identities.  In Section \ref{se:jump}, we give identities relating the Schiffer operator to a Cauchy-type integral (in general genus), we relate it to the jump decomposition, and establish the isomorphism theorems for the jump decomposition and the Schiffer operator.
\end{subsection}
\end{section}
\begin{section}{Notations and Preliminaries} \label{se:preliminaries}
\begin{subsection}{Forms and functions}

 We begin by establishing notation and terminology.

 Let $R$ be a Riemann surface, which we will always assume to be connected.
 For smooth real one-forms, define the dual of the almost
 complex structure  $\ast$  by

 \[  \ast (a\, dx + b \, dy) = a \,dy - b \,dx \]
 in a local holomorphic coordinate  $z=x+iy$.
This is independent of the choice of coordinates.  Harmonic functions $f$ on $R$
 are those which satisfy $d \ast d f =0$, while harmonic one-forms $\alpha$ are those which satisfy
 both $d\alpha =0$ and $\ast d \alpha =0$.  Equivalently, harmonic one-forms are those which can be expressed locally as $df$ for some harmonic function $f$.
  We consider complex-valued functions and forms.   Denote complex conjugation of functions and forms with an overline, e.g. $\overline{\alpha}$.

 Harmonic one-forms $\alpha$ can always be decomposed as a sum of a holomorphic and anti-holomorphic one-form.  The decomposition is unique.  On the other hand, harmonic functions do not possess such a decomposition.

The space of complex one-forms on $R$ has the natural inner-product
\[ (\omega_1,\omega_2)_{A(R)} = \frac{1}{2} \iint_R \omega_1 \wedge \ast \overline{\omega_2}; \]
Denote by $L^2(R)$ the set of one-forms which are $L^2$ with respect to this inner product.
The Bergman space of holomorphic one forms is
\[  A(R) = \{ \alpha \in L^2(R) \,:\, \alpha \ \text{holomorphic} \} \]
and the set of antiholomorphic $L^2$ one-forms will be denoted by $\overline{A(R)}$.  This notation is of course consistent, because $\beta \in \overline{A(R)}$ if and only if $\beta= \overline{\alpha}$ for some $\alpha \in A(R)$.  We will also denote
\[  A_{\text{harm}}(R) =\{ \alpha \in L^2(R) \,:\, \alpha \ \text{harmonic} \}. \]
Observe that $A(R)$ and $\overline{A(R)}$ are orthogonal with respect to the aforementioned inner product.

If $F: R_1 \rightarrow R_2$ is a conformal map, then we denote the pull-back of $\alpha \in A_{\text{harm}}(R)$
under $F$ by $F^*\alpha.$

We also define the Dirichlet spaces by
\begin{align*}
   \mathcal{D}_{\text{harm}} (R)& = \{ f:R \rightarrow \mathbb{C} \,:\,
   df\in L^2 (R)\,\,\,\mathrm{and}\,\, \, d\ast df =0 \},\\
   \mathcal{D}(R)& = \{ f:R \rightarrow \mathbb{C} \,:\,
    df \in A(R) \}, \ \text{and} \\
    \overline{\mathcal{D}(R)} & = \{ f:R \rightarrow \mathbb{C} \,:\,
    df \in \overline{A(R)} \}. \\
\end{align*}

We can define a degenerate inner product on $\mathcal{D}_{\text{harm}}(R)$ by
\[   (f,g)_{\mathcal{D}_{\text{harm}}(R)} = (df,dg)_{A_{\text{harm}}(R)}.   \]

If we denote
\[  \mathcal{D}_{\text{harm}}(R)_q = \{ f \in \mathcal{D}_{\text{harm}}(R) \, :\, f(q)=0 \}  \]
for some $q \in R$, then the scalar product defined above is a genuine inner product on $\mathcal{D}_{\text{harm}}(R)_{q}$
and also makes it a Hilbert space. In what follows, a subscript $q$ on a space of functions indicates the subspace of functions such that $f(q)=0$.\\

If we now define the Wirtinger operators via their local coordinate expressions
\[   \partial f = \frac{\partial f}{\partial z}\, dz,  \ \ \
   \overline{\partial} f =  \frac{\partial f}{\partial \bar{z}}\, d \bar{z}, \]
then the aforementioned inner product can be written as
\begin{equation} \label{eq:inner_product_with_dbar_and_d}
 (f,g)_{\mathcal{D}_{\text{harm}}}(R) = \frac{i}{2} \iint_{R} \left[ \partial f \wedge \overline{\partial  g} -  \overline{\partial} f \wedge
 \partial \overline{g} \right].
\end{equation}
One can easily see from (\ref{eq:inner_product_with_dbar_and_d}) that $\mathcal{D}(R)$ and $\overline{\mathcal{D}(R)}$ are orthogonal with respect to the inner product. We also note that if $R$ is a planar domain and $f \in \mathcal{D}(R)$,  then $(f,f)_{\mathcal{D}(R)} = \iint_R |f'(z)|^2 dA$ where $dA$ denotes Lebesgue measure in the plane.\\


Finally, we will repeatedly use the following elementary fact.
\begin{lemma} \label{th:uniform_controlled_by_L^2} Let $U \subset \mathbb{C}$ be an open set.  For any
 compact subset $K$ of $U$, there is a constant $M_K$ such that
 \[  \sup_{z\in K}|\alpha(z)| \leq M_K \| \alpha(z)\, dz \|_{A_{\mathrm{harm}}(U)}  \]
 for all $\alpha(z)\,dz \in A_{\mathrm{harm}}(U)$.

For any Riemann surface $R$, compact subset $K$ of $R$, and fixed $q \in R$, there is a constant $M_K$ such that
 \[\sup_{z\in K} |h(z)| \leq M_K \| h \|_{\mathcal{D}_{\mathrm{harm}}(R)_q}   \]
 for all $h \in \mathcal{D}_{\mathrm{harm}}(R)_q$.
\end{lemma}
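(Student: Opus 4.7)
The plan is to handle the planar case directly from the standard Bergman pointwise estimate, and then reduce the Riemann-surface statement to a local disk inequality followed by a chaining argument.

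For the planar case, I first note that if $\omega = \alpha(z)\,dz$ lies in $A_{\mathrm{harm}}(U)$, then $d\omega = -\bar\partial\alpha \wedge dz = 0$ forces $\alpha$ to be holomorphic; and since $*\,dz = -i\,dz$, a short computation shows $\|\omega\|_{A_{\mathrm{harm}}(U)}^2 = \iint_U |\alpha|^2\,dA$. The bound then reduces to the classical Bergman pointwise estimate: for each $z_0 \in K$ choose $r > 0$ small enough that $\overline{D(z_0,r)} \subset U$, uniformly over $K$ by compactness; the area mean-value property for the holomorphic function $\alpha$ on $D(z_0,r)$ combined with Cauchy--Schwarz then gives $|\alpha(z_0)| \leq (\pi r^2)^{-1/2} \|\alpha\|_{L^2(U)}$, so $M_K = (\pi r^2)^{-1/2}$ works.

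For the Riemann-surface case, I would first establish a local disk version: if $D$ is a coordinate disk of radius $R$ centered at a point $p$ and $K' \subset D$ is compact with positive distance from $\partial D$, then there exists $C = C(K',D,p)$ such that every harmonic $h$ on $D$ with $h(p) = 0$ and $dh \in L^2(D)$ satisfies $\sup_{K'}|h| \leq C\,\|dh\|_{L^2(D)}$. This I would prove by expanding $h(z) = \sum_{n \geq 1}(a_n (z-p)^n + b_n \overline{(z-p)}^n)$, computing $\|dh\|_{L^2(D)}^2 = \pi \sum_n n R^{2n}(|a_n|^2 + |b_n|^2)$, and then for $|z-p| \leq \rho < R$ applying Cauchy--Schwarz to $|h(z)| \leq \sum (|a_n|+|b_n|)\rho^n$ against the convergent weight $\sum (\rho/R)^{2n}/n$.

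To globalize: given $q \in R$ and compact $K \subset R$, cover $K \cup\{q\}$ by a finite family of coordinate disks. Since $R$ is connected, any $z \in K$ can be joined to $q$ by a chain $D_0, D_1, \dots, D_n$ drawn from this family whose consecutive pairs overlap in an interior sub-disk. Choose transition points $q_j \in D_{j-1} \cap D_j$ (with $q_0 = q$) and iterate the disk estimate, applying it at step $j$ to the harmonic function $h - h(q_j)$ on $D_j$; telescoping yields $|h(z)| \leq \bigl(\sum_j C_j\bigr)\,\|dh\|_{A_{\mathrm{harm}}(R)} = \bigl(\sum_j C_j\bigr)\,\|h\|_{\mathcal{D}_{\mathrm{harm}}(R)_q}$, using $\|dh\|_{L^2(D_j)} \leq \|dh\|_{A_{\mathrm{harm}}(R)}$. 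Compactness of $K$ guarantees that finitely many such chains suffice, producing a uniform $M_K$. The main technical nuisance is organizing the chain argument so the total constant stays finite, which is immediate once the cover is finite.
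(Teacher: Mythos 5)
Your proof is correct. The paper itself offers no written argument here---it declares the first claim classical and the second an elementary consequence of the first---so your write-up supplies details the paper omits rather than paralleling an explicit proof. For the planar part, your observations that harmonicity of $\alpha(z)\,dz$ forces $\alpha$ to be holomorphic and that $\|\alpha\,dz\|_{A_{\mathrm{harm}}(U)}^2=\iint_U|\alpha|^2\,dA$ under the paper's normalization are both right, and the areal mean-value property plus Cauchy--Schwarz bound $|\alpha(z_0)|\le(\pi r^2)^{-1/2}\|\alpha\|_{L^2(U)}$, with $r$ chosen uniformly by compactness of $K$, is exactly the classical estimate being invoked. For the surface part, the route the paper presumably intends is to apply the planar claim in charts to the coefficients of $\partial h$ and $\overline{\partial} h$ and then integrate $dh$ along paths from $q$; your alternative---proving a local disk estimate via the expansion $h=\sum_{n\ge 1}(a_n z^n+b_n\bar z^n)$, the computation $\|dh\|_{L^2(D)}^2=\pi\sum_n nR^{2n}(|a_n|^2+|b_n|^2)$, and Cauchy--Schwarz against the convergent weight $\sum_n (\rho/R)^{2n}/n$, followed by chaining over a finite cover of coordinate disks---is equally elementary, uses the conformal invariance of the $L^2$ norm of one-forms implicitly (so the chart computation does give the intrinsic norm), and is complete. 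The only small wrinkle is that your disk lemma assumes $h$ vanishes at the \emph{center} of the disk, while in the chain you apply it with $h$ vanishing at a transition point $q_j$ that need not be the center; either restate the lemma as a two-point oscillation bound $|h(x)-h(y)|\le C\,\|dh\|_{L^2(D)}$ for $x,y$ in a fixed compact subset of $D$ (the constant term $a_0$ is absorbed by evaluating at $q_j$ and is controlled by the same series estimate), or insert the center as an intermediate point. With that cosmetic fix, the telescoping and the finiteness of the total constant over the finitely many possible chains go through exactly as you say.
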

The first claim is classical and the second claim is an elementary consequence of the first.

\end{subsection}
\begin{subsection}{Transmission of harmonic functions through quasicircles}
 In this section we summarize some necessary results of the authors. The proofs
 can be found in \cite{Schippers_Staubach_general_transmission}.
\\

 Let $R$ be a compact Riemann surface. Let $\Gamma$ be a Jordan curve in $R$, that is a homeomorphic image of $\mathbb{S}^1$.  We say that $U$ is a doubly-connected neighbourhood of $\Gamma$ if $U$ is an open set containing $\Gamma$, which is bounded by two non-intersecting Jordan curves each of which is homotopic to $\Gamma$ within the closure of $U$.  We say that a Jordan curve $\Gamma$ \emph{is strip-cutting} if there is a doubly-connected neighbourhood $U$ of $\Gamma$ and a conformal map $\phi:U \rightarrow \mathbb{A} \subseteq \mathbb{C}$ so that $\phi(\Gamma)$ is a Jordan curve in $\mathbb{C}$.
 We say that $\Gamma$ is a \emph{quasicircle} if $\phi(\Gamma)$ is a quasicircle.  In particular a quasicircle is a strip-cutting Jordan curve.   A closed analytic curve is strip-cutting by definition.

If $R$ is a Riemann surface and $\riem \subset R$ is a proper subset of $R$, then we say
that $g(w,z)$ is the Green's function for $\riem$ if $g(w,\cdot)$ is harmonic on $R \backslash \{w\}$,  $g(w,z) + \log{|\phi(z)-\phi(w)|}$ is harmonic in $z$
for a local parameter $\phi:U \rightarrow \mathbb{C}$ in an open neighbourhood $U$ of $w$, and $\lim_{z \rightarrow z_0} g(w,z) =0$ for all $z_0 \in \partial \riem$ and $w \in \riem$.  Green's function is unique and symmetric, provided that it exists.  In this paper, we will consider only the
case where $R$ is compact and no boundary component of $\riem$ reduces to a point, so Green's function of $\riem$ exists; see for example L. Ahlfors and L. Sario \cite[II.3]{Ahlfors_Sario}.

Now let $\riem$ be one of the connected components in $R$ of the complement of $\Gamma$.
Fix a point $q \in \riem$ and let $g_q$ be Green's function of $\riem$ with singularity at $q$.
We associate to $g_q$ a biholomorphism from a doubly-connected region in $\riem$, one of whose borders is $\Gamma$, onto an annulus as follows.
Let $\gamma$ be a smooth curve in $\riem$ which is homotopic to $\Gamma$, and let
$m = \int_\gamma \ast dg_q$.  If $\tilde{g}$ denotes the multi-valued harmonic
conjugate of $g_q$, then the function
\[  \phi = \exp{[-2 \pi  (g_q + i \tilde{g})/m]}  \]
is holomorphic and single-valued on some region $A_r$ bounded by $\Gamma$ and a level curve
$\Gamma^{q}_r = \{ z\,:\, g_q(z) = r \}$ of $g_q$ for some $r>0$.
A standard use of the argument principle shows that $\phi$ is one-to-one
and onto the annulus $\{ z : e^{-2\pi r/m} <|z|<1 \}$.  It can be shown that $\phi$ has a continuous extension to  $\Gamma$ which is a homeomorphism of $\Gamma$ onto $\mathbb{S}^1$.  By decreasing $r$, one can also arrange that $\phi$ extends analytically to a
neighbourhood of $\Gamma^{q}_r$.

We call this the \textit{canonical collar chart} with respect to $(\riem,q)$.
It is uniquely determined up to a rotation and the choice of $r$ in the definition of domain.

We say that a closed set $I \subseteq \Gamma$ is null with respect to $(\riem,q)$ if $\phi(I)$ has logarithmic capacity zero in $\mathbb{S}^1$.
The notion of a null set does not depend on the position of the singularity $q$.  For quasicircles, it is also independent of the side of the curve.
\begin{theorem}  Let $R$ be a compact Riemann surface and $\Gamma$ be a strip-cutting Jordan-curve
 separating $R$ into two connected components $\riem_1$ and $\riem_2$.  Let $I$ be a closed set in $\Gamma$.
 \begin{enumerate}
  \item $I$ is null with respect to $(\riem_1,q)$ for some $q \in \riem_1$ if and only if it is null with respect to $(\riem_1,q)$ for all $q \in \riem_1$.
  \item If $\Gamma$ is a quasicircle, then $I$ is null with respect to $(\riem_1,q)$ for some $q \in \riem_1$ if and only if $I$ is null with respect to $(\riem_2,p)$ for all $p \in \riem_2$.
 \end{enumerate}
\end{theorem}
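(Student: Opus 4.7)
The plan is to reduce both parts to the claim that the transition between two canonical collar charts, viewed as a self-homeomorphism of $\mathbb{S}^1$, is quasisymmetric. The conclusion then follows from the classical fact that quasisymmetric homeomorphisms of $\mathbb{S}^1$ preserve the class of subsets of logarithmic capacity zero (for instance via the Ahlfors--Beurling extension to a quasiconformal self-map of $\mathbb{D}$ and the quasiconformal invariance of polar sets).

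For part (1), fix $q, q' \in \riem_1$ with canonical collar charts $\phi$ and $\phi'$. Shrinking the parameter $r$ in each construction, I may assume both charts are defined on a common open one-sided collar of $\Gamma$ in $\riem_1$ and each restricts to a homeomorphism $\Gamma \to \mathbb{S}^1$. The transition $\phi' \circ \phi^{-1}$ is then a conformal isomorphism between open one-sided annular neighbourhoods of $\mathbb{S}^1$, continuous up to $\mathbb{S}^1$. Since $\mathbb{S}^1$ is real-analytic, Schwarz reflection extends this conformal map analytically across $\mathbb{S}^1$; its boundary restriction is therefore a real-analytic (in particular bi-Lipschitz) diffeomorphism of $\mathbb{S}^1$, so part (1) follows.

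For part (2), fix $q \in \riem_1$, $p \in \riem_2$ and the canonical collar charts $\phi_1, \phi_2$. Use the quasicircle hypothesis: choose a doubly-connected neighbourhood $U$ of $\Gamma$ with analytic boundary components, and a conformal map $\psi: U \to \mathbb{A} \subseteq \mathbb{C}$ such that $\tilde{\Gamma} := \psi(\Gamma)$ is a planar quasicircle. Let $D_1, D_2 \subset \hat{\mathbb{C}}$ be the two Jordan domains bounded by $\tilde{\Gamma}$, ordered so that $\psi(U \cap \riem_k) \subset D_k$, and let $R_k$ be a Riemann map onto $D_k$ from the appropriate side of $\mathbb{S}^1$. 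The map $R_k^{-1} \circ \psi \circ \phi_k^{-1}$ is then a conformal bijection between open one-sided annular neighbourhoods of $\mathbb{S}^1$, so by the Schwarz-reflection argument of part (1) its restriction to $\mathbb{S}^1$ is a real-analytic diffeomorphism. Consequently $\phi_2 \circ \phi_1^{-1}$ agrees on $\mathbb{S}^1$, up to real-analytic pre- and post-composition, with the classical conformal welding $R_2^{-1} \circ R_1$ of $\tilde{\Gamma}$. By Ahlfors's welding characterisation of planar quasicircles this welding is quasisymmetric, and part (2) follows.

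The main obstacle is the reduction carried out in part (2): rewriting the canonical collar chart as a Riemann map onto a planar Jordan domain whose boundary is a quasicircle, so that Ahlfors's theorem applies. This depends on choosing $U$ with analytic inner boundary, since that analyticity underwrites the Schwarz-reflection step used to compare the canonical collar chart and $R_k^{-1} \circ \psi$ as bi-Lipschitz parametrisations of $\mathbb{S}^1$.
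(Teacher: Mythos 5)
Your argument is essentially correct, but note first that this statement is one of the results quoted in Section 2 from \cite{Schippers_Staubach_general_transmission}; the present paper gives no proof of it, so there is nothing here to compare with line by line. Your route is the natural one and, as far as the cited source goes, essentially the intended one: reduce everything to boundary transition maps on $\mathbb{S}^1$, observe that same-side transitions are restrictions of conformal maps with circle boundary values (hence extend across $\mathbb{S}^1$ by reflection and are real-analytic diffeomorphisms), and reduce the two-sided comparison to the conformal welding of the planar quasicircle $\psi(\Gamma)$, which is quasisymmetric, together with the fact that quasisymmetric homeomorphisms of $\mathbb{S}^1$ preserve sets of logarithmic capacity zero.

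A few points deserve tightening. First, the capacity-preservation claim is for sets sitting on the boundary circle, so the parenthetical appeal to ``quasiconformal invariance of polar sets'' needs either a reflection of the quasiconformal extension across $\mathbb{S}^1$ (so the set lies in the interior of the domain of a planar quasiconformal map), or, more simply, the observation that a quasisymmetric circle homeomorphism and its inverse are H\"older, which gives the required control of logarithmic energies directly; the same remark disposes of the nonvanishing-derivative issue in part (1), since even an analytic circle homeomorphism with critical points is bi-H\"older. Second, your closing claim that the argument ``depends on choosing $U$ with analytic inner boundary'' is off the mark: the reflection is performed across $\mathbb{S}^1$ in the chart targets, using only that $\psi$ is conformal on a neighbourhood of $\Gamma$, that the Riemann maps onto the Jordan domains bounded by $\psi(\Gamma)$ extend homeomorphically to the closures (Carath\'eodory), and that the canonical collar charts extend homeomorphically to $\Gamma$; the regularity of $\partial U$ plays no role. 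Third, in part (1) you should say a word about why the two canonical collar charts share a common collar domain — e.g.\ that for $\epsilon$ below the least critical value of $g_q$ the region $\{0<g_q<\epsilon\}$ is a collar neighbourhood of $\Gamma$ contained in the domain of the other chart for $\epsilon$ small. With these small repairs the proof is complete.
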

Thus for quasicircles we can say ``$I$ is null in $\Gamma$'' without ambiguity.  For strip-cutting Jordan curves, we may say that ``$I$ is null in $\Gamma$ with respect to $\riem$'' without ambiguity.

\begin{definition}
Given a function $f$ on an open neighbourhood of $\Gamma$ in the closure of $\riem$, we say that the limit of $f$ exists conformally non-tangentially at $p \in \Gamma$ with respect to $(\riem,q)$ if  $f\circ \phi^{-1}$ has non-tangential limits at $\phi(p)$ where $\phi$ is the canonical collar chart induced by Green's function $g_q$ of $\riem$.  The conformal non-tangential limit of $f$ at $p$ is defined to be the non-tangential limit of $f \circ \phi^{-1}$.
\end{definition}
We will abbreviate ``conformally non-tangential'' as $\mathrm{CNT}$ throughout the paper.

\begin{theorem}   Let $R$ be a compact Riemann surface and let $\Gamma$ be a strip-cutting Jordan curve separating $R$ into two connected components.  Let $\riem$ be one of these components. For any $H \in \mathcal{D}_{\mathrm{harm}}(\riem)$, the $\mathrm{CNT}$ limit of $H$ exists at every point in $\Gamma$ except possibly on a null set with respect to $\riem$.  For any $q$ and $q'$ in $\riem$, the boundary values so obtained agree except on a null set $I$ in $\Gamma$.   If $H_1,H_2 \in \mathcal{D}(\riem)$ have the same $\mathrm{CNT}$ boundary values except
on a null set then $H_1 = H_2$.
\end{theorem}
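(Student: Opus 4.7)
The plan is to reduce everything to classical boundary-value theory on the disk via the canonical collar chart. Fix $q\in\riem$ and let $\phi_q : A_r \to \{e^{-2\pi r/m}<|z|<1\}$ be the canonical collar chart, and set $\tilde H := H\circ \phi_q^{-1}$. Since $\phi_q$ is conformal and the Dirichlet integral is conformally invariant, $\tilde H$ is a harmonic function of finite Dirichlet integral on a one-sided annular neighbourhood of $\mathbb{S}^1$ inside $\disk$. This transfer is the heart of the reduction: from now on the outer boundary is analytic even though $\Gamma$ may be wild.

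For the existence of CNT limits, I would invoke the classical theorem (due essentially to Beurling) that a Dirichlet-finite harmonic function on an annular neighbourhood of $\mathbb{S}^1$ has non-tangential limits at every point of $\mathbb{S}^1$ outside a subset of logarithmic capacity zero; one can, for instance, decompose $\tilde H$ into a part which extends harmonically across $\mathbb{S}^1$ and a part whose only singularity is toward the inner boundary, thereby reducing to the disk version of the statement. Unwinding through $\phi_q$, the exceptional set on $\Gamma$ is precisely a null set with respect to $(\riem,q)$.

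To establish independence of the basepoint, compare with a second canonical collar chart $\phi_{q'}$. On the overlap of the two collars the transition $\psi := \phi_{q'}\circ\phi_q^{-1}$ is a biholomorphism between one-sided annular neighbourhoods of $\mathbb{S}^1$ inside $\disk$ that extends continuously to a homeomorphism of $\mathbb{S}^1$ onto itself (the boundary identification induced by the identity on $\Gamma$). Since $\mathbb{S}^1$ is analytic, Schwarz reflection across $\mathbb{S}^1$ extends $\psi$ to a conformal map on a genuine two-sided neighbourhood of $\mathbb{S}^1$. Such an extension has non-vanishing derivative on $\mathbb{S}^1$ and so preserves both non-tangential approach and logarithmic-capacity-zero subsets of $\mathbb{S}^1$. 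Consequently the CNT boundary values defined via $q$ and via $q'$ agree at every point of $\Gamma$ outside the union of the two exceptional null sets, which is still null.

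For the uniqueness statement, take $f := H_1 - H_2 \in \mathcal{D}(\riem)$, which is holomorphic and whose CNT boundary value vanishes off a null set. Then $\tilde f := f \circ \phi_q^{-1}$ is holomorphic on the annulus, has finite Dirichlet integral, and has non-tangential boundary value zero off a set of logarithmic capacity zero on $\mathbb{S}^1$, hence off a set of full Lebesgue measure. The classical Luzin--Privalov uniqueness theorem (applicable because Dirichlet-finite holomorphic functions lie in the Nevanlinna class and admit non-tangential limits a.e.) then forces $\tilde f \equiv 0$ on the annulus, whence $f\equiv 0$ on $\riem$ by analytic continuation. The most delicate step of the whole argument is the independence of $q$: it is essential that the transition between two canonical collar charts extends analytically across $\mathbb{S}^1$, so that the notion of non-tangential approach to the potentially wild curve $\Gamma$ is well-defined intrinsically rather than merely chart-by-chart.
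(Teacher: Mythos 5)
Note first that this theorem is not proved in the present paper: it is quoted from the authors' transmission paper \cite{Schippers_Staubach_general_transmission}, and the argument there runs along essentially the lines you propose -- transfer to the annulus by the canonical collar chart, Beurling's theorem for Dirichlet-finite harmonic functions on $\disk$ after splitting off the piece that is harmonic across $\mathbb{S}^1$, analytic extension (Schwarz reflection) of the chart transition $\phi_{q'}\circ\phi_q^{-1}$ for basepoint independence, and a Privalov-type uniqueness theorem. Your proposal is therefore correct in substance and takes the same route; only cosmetic tightening is needed: the two pieces of your annulus decomposition are described as if they were the same piece (the intended splitting is into a part harmonic on all of $\disk$ with finite Dirichlet integral and a part, possibly including a $c\log|z|$ term, harmonic in a neighbourhood of $\mathbb{S}^1$), the continuity of $\psi$ up to $\mathbb{S}^1$ should be noted to follow from the fact that each canonical collar chart extends to a homeomorphism of the closed collar onto the closed annulus, and Luzin--Privalov, being stated for Jordan domains with rectifiable boundary, should be applied after cutting the annulus along a radial segment (or on a suitable subdomain) rather than directly.
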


From now on, the terms ``$\mathrm{CNT}$ boundary values'' and ``boundary values'' of a Dirichlet-bounded harmonic function refer to the $\mathrm{CNT}$ limits thus defined except possibly on a null set.  Also, if $\Gamma$ is a quasicircle,  we say that two functions $h_1$ and $h_2$ agree on $\Gamma$ ($h_1=h_2$) if they agree except on a null set.   Outside of this section we will drop the phrase ``except on a null set'', although it is implicit wherever boundary values are considered.

The set of boundary values of Dirichlet-bounded harmonic functions in a certain sense determined only by a neighbourhood of the boundary.  For quasicircles, it is side-independent: that is, the set of boundary values of the Dirichlet spaces of $\riem_1$
and $\riem_2$ agree.

  To make the first statement precise we define a kind of one-sided neighbourhood of $\Gamma$ which we clal a  collar neighourhood.  Let $\Gamma$ be a strip-cutting Jordan curve in a Riemann surface $R$.  By a collar neighbourhood of $\Gamma$ we mean an open set $A$, bounded by two Jordan curves one of which is $\Gamma$, and such that  (1) the other Jordan curve $\Gamma'$ is homotopic to $\Gamma$ in the closure of $A$ and (2) $\Gamma' \cap \Gamma$ is empty.   For example, if $U$ is a doubly-connected neighbourhood of $\Gamma$, and $\Gamma$ separates a compact Riemann surface $R$ into two connected components, the intersection of $U$ with one of the the components is a collar neighbourhood.  Also, the domain of the canonical collar chart is a collar neighbourhood
  if the annulus $r<|z|<1$ is chosen with $r$ sufficiently close to one.

\begin{theorem}  \label{th:transmission_equivalences} Let $R$ be a compact Riemann surface and let $\Gamma$ be a strip-cutting Jordan curve
separating $R$ into connected components $\riem_1$ and $\riem_2$.  Let $h$ be a function defined on $\Gamma$, except possibly on a null set in $\Gamma$.  The following are equivalent.
\begin{enumerate}
 \item[$(1)$] There is some $H \in \mathcal{D}_{\mathrm{harm}}(\riem_1)$ whose $\mathrm{CNT}$ boundary values agree with $h$ except possibly on a null set.
 \item[$(2)$] There is a collar neighbourhood $A$ of $\Gamma$ in $\riem_1$, one of whose boundary components is $\Gamma$, and
  some $H \in \mathcal{D}(A)$ whose $\mathrm{CNT}$ boundary values agree with $h$ except
  possibly on a null set with respect to $\riem_1$.
\end{enumerate}
 If $\Gamma$ is a quasicircle, then the following may be added to the list of equivalences above.
\begin{enumerate}   \setcounter{enumi}{2}
 \item[$(3)$] There is some $H \in \mathcal{D}_{\mathrm{harm}}(\riem_2)$ whose $\mathrm{CNT}$ boundary values agree with $h$ except possibly on a null set.
 \item[$(4)$] There is a collar neighbourhood $A$ of $\Gamma$ in $\riem_2$, one of whose boundary components is $\Gamma$, and
  some $H \in \mathcal{D}(A)$ whose $\mathrm{CNT}$ boundary values agree with $h$ except possibly on a null set.
\end{enumerate}
\end{theorem}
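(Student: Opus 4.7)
The theorem packages several equivalences about the CNT boundary values of Dirichlet-bounded (harmonic or holomorphic) functions near a Jordan curve $\Gamma$. The one-sided equivalences $(1) \Leftrightarrow (2)$ and $(3) \Leftrightarrow (4)$ concern extension and restriction near $\Gamma$, while the two-sided equivalence $(1) \Leftrightarrow (3)$ is the transmission result valid only for quasicircles, and forms the main input from \cite{Schippers_Staubach_general_transmission}.

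For $(1) \Rightarrow (2)$, my plan would be to take $H \in \mathcal{D}_{\mathrm{harm}}(\Sigma_1)$, restrict to the domain of the canonical collar chart (so that a neighbourhood of $\Gamma$ in $\Sigma_1$ becomes an annulus), and produce a holomorphic representative on a thinner collar with the same CNT boundary values on $\Gamma$, using the Fourier/Laurent decomposition of $H$ on the annulus together with the CNT uniqueness statement of the preceding theorem. For $(2) \Rightarrow (1)$, I would solve the Dirichlet problem on $\Sigma_1$ with boundary data $h$: the hypothesis that $h$ arises from a Dirichlet-bounded function on a collar guarantees that $h$ is admissible boundary data, and the CNT uniqueness of the preceding theorem forces the Dirichlet solution to have CNT boundary values equal to $h$ off a null set. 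The equivalence $(3) \Leftrightarrow (4)$ is handled identically, replacing $\Sigma_1$ by $\Sigma_2$.

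The key equivalence $(1) \Leftrightarrow (3)$, valid when $\Gamma$ is a quasicircle, is the transmission theorem of \cite{Schippers_Staubach_general_transmission}: given $H_1 \in \mathcal{D}_{\mathrm{harm}}(\Sigma_1)$ with CNT boundary values $h$, the transmission operator produces a unique $H_2 \in \mathcal{D}_{\mathrm{harm}}(\Sigma_2)$ with the same CNT boundary values, and the converse is obtained by exchanging the roles of $\Sigma_1$ and $\Sigma_2$. Composing with the already-established one-sided equivalences then gives $(2) \Leftrightarrow (4)$ as a consequence.

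The principal obstacle is the construction of the transmission operator in \cite{Schippers_Staubach_general_transmission}: this uses a quasiconformal reflection across $\Gamma$ together with the quasiconformal invariance of the Dirichlet integral, and the quasicircle hypothesis is essential, since for a generic strip-cutting Jordan curve the transmission need not exist. Conditional on that prior result, the present theorem reduces to routine localization arguments and the Dirichlet-problem solver on $\Sigma_k$, combined with the CNT framework already set up in the preceding theorem.
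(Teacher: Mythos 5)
You should first be aware that the paper itself contains no proof of this theorem: it sits in the preliminaries, explicitly quoted from \cite{Schippers_Staubach_general_transmission}, so there is no in-paper argument to compare against. Your plan defers the only substantive ingredient, the two-sided equivalence $(1)\Leftrightarrow(3)$ (the transmission operator), to that very reference, which matches the paper's treatment but means your proposal is essentially attribution plus a sketch of the one-sided equivalences. The trouble is that the one-sided part you do argue has a genuine gap.

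Your route for $(1)\Rightarrow(2)$ --- pass to the canonical collar chart and use the Fourier/Laurent decomposition of $H$ to ``produce a holomorphic representative on a thinner collar with the same CNT boundary values'' --- cannot work as described. In the paper's notation $\mathcal{D}(A)$ is the \emph{holomorphic} Dirichlet space on the collar, and any collar $A$ of $\Gamma$ in $\Sigma_1$ contains (in the chart) a round annulus $r'<|z|<1$. A holomorphic $f=\sum_n c_n z^n$ with finite Dirichlet integral there must satisfy $\sum_{n<0}|n|\,|c_n|^2 r'^{2n}<\infty$, i.e.\ its negative Laurent coefficients decay exponentially, and these are exactly the negative Fourier coefficients of its CNT boundary values on $\Gamma$. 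A general $H\in\mathcal{D}_{\mathrm{harm}}(\Sigma_1)$ has boundary data satisfying only the $H^{1/2}$-type condition $\sum_n |n|\,|a_n|^2<\infty$ (take $a_{-m}=m^{-1.1}$ on the disk), so no choice of thinner collar lets the Laurent construction absorb the anti-holomorphic part of $H$; this obstruction is precisely why the paper later proves only \emph{density} of $\mathfrak{G}\mathcal{D}(A)$ in $\mathcal{D}_{\mathrm{harm}}(\Sigma_1)$ (Theorem \ref{th:Dirichlet_annulus_dense}) rather than surjectivity, which your argument would imply. If instead item $(2)$ is read with $\mathcal{D}_{\mathrm{harm}}(A)$ --- which is how the paper actually invokes the theorem when defining $\mathfrak{G}$ --- then $(1)\Rightarrow(2)$ is trivial restriction and needs no Laurent argument, while the substantive direction $(2)\Rightarrow(1)$ is exactly the point you pass over with ``the hypothesis guarantees that $h$ is admissible boundary data'': proving that boundary values coming from a finite-energy function on a collar admit a finite-energy harmonic extension to all of $\Sigma_1$ attaining the same CNT data off a null set is the content of the cited result, not a routine localization. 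A minor further point: the Dirichlet integral is only quasi-invariant (up to the factor $K$), not invariant, under quasiconformal reflection, though that suffices for boundedness of the transmission.
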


Thus, for a quasicircle $\Gamma$ we may define $\mathcal{H}(\Gamma)$ to be the set of equivalence classes of functions $h:\Gamma \rightarrow \mathbb{C}$ which are boundary values of elements of $\mathcal{D}_{\text{harm}}(\riem_1)$ except possibly on a null set, where we define two such functions to be equivalent
if they agree except possibly on a null set.

This theorem also induces a map from $\mathcal{D}_{\text{harm}}(\riem_1)$ to $\mathcal{D}_{\text{harm}}(\riem_2)$ as follows:
\begin{definition}  Let $\Gamma$ be a quasicircle in a compact Riemann surface $R$, separating it into two connected components $\riem_1$ and $\riem_2$.
Given $H \in \mathcal{D}(\riem_1)$, let $h$ be the $\mathrm{CNT}$ boundary values of $H$ on $\Gamma$.  Define $\mathfrak{O}(\riem_1,\riem_2) H$  \footnote{The notation $\mathfrak{O}$ for this transmission operator stems from the first letter in the Old English word ``oferferian'' which means ``to transmit'' (or ``to overfare'').}
to be the unique element of $\mathcal{D}_{\mathrm{harm}}(\riem_2)$ with boundary values equal to $h$.
\end{definition}
This operator enables one to transmit harmonic functions from one side of the Riemann surface to the other side through the quasicircle $\Gamma$.

\begin{theorem} \label{th:transmission_bounded} Let $R$ be a compact Riemann surface and
 $\Gamma$ be a quasicircle separating $R$ into components $\riem_1$ and $\riem_2$.  The map
 \[ \mathfrak{O}(\riem_1,\riem_2):\mathcal{D}_{\mathrm{harm}}(\riem_1)
  \rightarrow \mathcal{D}_{\mathrm{harm}}(\riem_2)  \] induced by $\mathrm{Theorem\,\,\, \ref{th:transmission_equivalences}}$ is bounded with respect to the Dirichlet semi-norm.
\end{theorem}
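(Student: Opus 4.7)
My approach is via quasiconformal reflection across $\Gamma$ combined with the Dirichlet principle. Since $\mathfrak{O}(\riem_1,\riem_2)$ preserves constants and constants have zero semi-norm, we normalize $H\in\mathcal{D}_{\text{harm}}(\riem_1)$ so that $H(q)=0$ for a fixed interior point $q$. Because $\Gamma$ is a quasicircle, there is a doubly-connected neighbourhood $U$ of $\Gamma$ and, by pulling back the standard planar quasiconformal reflection through the conformal chart in the definition of quasicircle, a $K$-quasiconformal involution $F:U\to U$ which fixes $\Gamma$ pointwise and interchanges $U\cap\riem_1$ with $U\cap\riem_2$.

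Define $\tilde H := H\circ F$ on the collar $V:=U\cap\riem_2$. The quasiconformal distortion bound for Dirichlet energy gives
\[
  \|d\tilde H\|_{L^2(V)}^2 \;\le\; K\,\|dH\|_{L^2(F(V))}^2 \;\le\; K\,\|H\|_{\mathcal{D}_{\text{harm}}(\riem_1)}^2.
\]
Choose a smooth cutoff $\chi:\riem_2\to[0,1]$ supported in $V$ and equal to $1$ on a sub-collar $V'\subset V$ of $\Gamma$, and set $u:=\chi\tilde H$, extended by zero outside $V$. A Leibniz expansion together with Lemma~\ref{th:uniform_controlled_by_L^2} (applied to $H$ on the compact set $F(\mathrm{supp}\,d\chi)\subset\riem_1$ to control $|\tilde H|$ pointwise by $\|H\|_{\mathcal{D}_{\text{harm}}(\riem_1)_q}$) yields $\|du\|_{L^2(\riem_2)}\le C\,\|H\|_{\mathcal{D}_{\text{harm}}(\riem_1)}$ for a constant $C$ depending only on $F$ and $\chi$.

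Because $F$ fixes $\Gamma$ pointwise and is quasiconformal, it preserves null sets (via the quasiconformal invariance of logarithmic capacity) and sends conformally non-tangential approach regions on one side of $\Gamma$ to comparable such regions on the other, through conjugation by the canonical collar charts. Hence the CNT boundary values of $u=\chi(H\circ F)$ from within $\riem_2$ agree with those of $H$ from within $\riem_1$ off a null set. By the uniqueness clause of the preceding theorem on CNT limits of Dirichlet-bounded harmonic functions, $\mathfrak{O}(\riem_1,\riem_2)H$ is the unique element of $\mathcal{D}_{\text{harm}}(\riem_2)$ with these boundary values; by the Dirichlet principle it minimizes Dirichlet energy among Dirichlet-bounded competitors with the same boundary data on $\Gamma$. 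Therefore
\[
  \|\mathfrak{O}(\riem_1,\riem_2)H\|_{\mathcal{D}_{\text{harm}}(\riem_2)} \;\le\; \|du\|_{L^2(\riem_2)} \;\le\; C\,\|H\|_{\mathcal{D}_{\text{harm}}(\riem_1)}.
\]

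The most delicate step I anticipate is the CNT-boundary-value matching between $H$ and $u=\chi(H\circ F)$: one must show that a quasiconformal involution continuous up to $\Gamma$ and fixing it pointwise transports CNT limits from one side of $\Gamma$ to CNT limits from the other, up to a null set. This reduces, through the canonical collar charts, to comparing non-tangential approach cones on $\mathbb{S}^1$ under a quasisymmetric conjugation, and rests on the quasiconformal invariance of logarithmic capacity together with bounded distortion of approach cones. Once this boundary-matching is in hand, the quasiconformal distortion estimate and the Dirichlet-principle comparison are routine.
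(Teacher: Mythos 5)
Your strategy (quasiconformal reflection across $\Gamma$, cut off, then compare with the harmonic competitor) is genuinely different from the paper's route: here Theorem \ref{th:transmission_bounded} is part of the background quoted from \cite{Schippers_Staubach_general_transmission}, whose argument runs through the canonical collar charts, the planar quasicircle transmission result, and the boundedness of the collar-to-surface extension operator $\mathfrak{G}$ of Theorem \ref{th:iota_bounded}, rather than through a reflection plus the Dirichlet principle. The unproblematic parts of your proposal are the normalization $H(q)=0$, the quasi-invariance of Dirichlet energy under a $K$-quasiconformal map, and the cutoff estimate (note only that $F$ is obtained by transporting a planar reflection through the chart, so one must shrink $U$ and one gets a QC homeomorphism of one collar onto the other rather than an involution of $U$; this is cosmetic). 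The two load-bearing steps, however, have genuine gaps.

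First, the boundary matching. You need that at quasi-every point of $\Gamma$ the CNT limit of $H\circ F$ from $\riem_2$ equals the CNT limit of $H$ from $\riem_1$. Conjugating by the two canonical collar charts turns $F$ into a quasiconformal map of annuli whose boundary correspondence on $\mathbb{S}^1$ is a quasisymmetric homeomorphism that is in general singular and nowhere differentiable; such a map need not carry Stolz cones into Stolz cones at a given boundary point (it can spiral), and there is no off-the-shelf theorem that the exceptional set where cone-into-cone containment fails has logarithmic capacity zero. So ``quasiconformal invariance of capacity together with bounded distortion of approach cones'' is a hope, not an argument; moreover $H\circ F$ is not harmonic, so the CNT existence/uniqueness statements quoted in Section 2 (formulated for harmonic, indeed holomorphic, Dirichlet functions) do not apply to it directly. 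This step is essentially of the same depth as the transmission results of \cite{Schippers_Staubach_general_transmission} themselves. Second, the Dirichlet principle. Even granting the boundary matching, to conclude $\|\,d\,\mathfrak{O}(\riem_1,\riem_2)H\,\|\le\|du\|$ you need that $\mathfrak{O}(\riem_1,\riem_2)H$ minimizes energy among \emph{all} Dirichlet-bounded competitors with the same CNT boundary values, equivalently that $u-\mathfrak{O}(\riem_1,\riem_2)H$ lies in the Dirichlet-seminorm closure of compactly supported functions so that the Royden decomposition gives orthogonality. Agreement of CNT boundary values off a null set does not by itself yield this for a non-harmonic competitor over a non-rectifiable curve; the uniqueness theorem you implicitly lean on compares two harmonic Dirichlet functions only. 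Making the Dirichlet principle rigorous in the CNT framework on a quasicircle is a substantive piece of work, comparable to the theorem you are trying to prove, so as written the proposal does not close.
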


\end{subsection}
\end{section}
\begin{section}{Schiffer's comparison operators}\label{Sec:Schiffer's comparison operators}
\begin{subsection}{Assumptions}
 The following notation and assumptions will be in place throughout the rest of the paper (see the relevant sections for further explanations):
 \begin{itemize}
  \item $R$ is a compact Riemann surface;
  \item $\Gamma$ is a strip-cutting Jordan-like curve separating $R$;
  \item $\riem_1$ and $\riem_2$ are the connected components of $R \backslash \Gamma$;
  \item $\riem$ stands for an unspecified component $\riem_1$ or $\riem_2$;
  \item $\Gamma$ is positively oriented with respect to $\riem_1$;
  \item $\Gamma^{p_k}_\epsilon$ the level curves of Green's function $g_{\riem_k}(\cdot,p_k)$ with respect to some fixed points $p_k \in \riem_k$;
  \item when an integrand depends on two variables, we will use the notation $\iint_{\riem,w}$ to
   specify that the integration takes place over the variable $w$.
 \end{itemize}

  We will sometimes alter the assumptions or repeat them for emphasis.  When no assumptions are indicated at all, the above assumptions are in place.
\end{subsection}
\begin{subsection}{Schiffer's comparison operators: definitions}
 Following for example Royden \cite{Royden}, we define Green's function of $R$ to be the unique function
 $g(w,w_0;z,q)$ such that
 \begin{enumerate}
  \item $g$ is harmonic in $w$ on $R \backslash \{z,q\}$;
  \item for a local coordinate $\phi$ on an open set $U$ containing $z$, $g(w,w_0;z,q) + \log| \phi(w) -\phi(z) |$ is harmonic
   for $w \in U$;
  \item for a local coordinate $\phi$ on an open set $U$ containing $q$, $g(w,w_0;z,q) - \log| \phi(w) -\phi(z) |$ is harmonic
   for $w \in U$;
  \item $g(w_0,w_0;z,q)=0$ for all $z,q,w_0$.
 \end{enumerate}
 It can be shown that $g$ exists, is uniquely determined by these properties, and furthermore
 satisfies the symmetry properties
 \begin{align}
  g(w,w_1;z,q) & = g(w,w_0;z,q) - g(w_1,w_0;z,q) \label{eq:g_w_0_dependence} \\
  g(w_0,w;z,q) & = - g(w,w_0;z,q) \label{eq:g_interchange_w} \\
  g(z,q;w,w_0) & = g(w,w_0;z,q).  \label{eq:g_interchange_both}
 \end{align}
 In particular, $g$ is also harmonic in $z$ away from the poles.

 We will treat $w_0$ as fixed throughout the paper, and notationally drop the dependence
 on $w_0$ as much as possible.
 In fact, it follows immediately from (\ref{eq:g_w_0_dependence}) that $\partial_w g$ is independent
 of $w_0$.  All formulas of consequence in this paper are independent of $w_0$ for this reason.



The following is an immediate consequence of the residue theorem and the fact that $g$ is harmonic in $w$.
\begin{theorem} \label{th:Greens_is_Cauchy_kernel}
  Let $\Gamma$ be a closed analytic curve separating $R$, enclosing $\riem$, which is positively
  oriented with respect to $\riem$.
  If $h$ is holomorphic on $\riem$, and $z,q \notin \Gamma$, then
  for any fixed $p \in \riem$
  \[  - \lim_{\epsilon \searrow 0} \frac{1}{\pi i} \int_{\Gamma_\epsilon^p} h(w)\, \partial_w g(w,w_0;z,q) = \chi_\riem (z) h(z) - \chi_\riem (q) h(q)   \]
  where $\chi_\riem$ is the characteristic function of $\riem$.
\end{theorem}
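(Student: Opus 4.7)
The plan is to reduce the statement to the residue theorem applied to the meromorphic one-form $h(w)\,\partial_w g(w,w_0;z,q)$ on $R$. The first key observation is that $\partial_w g$ really is a meromorphic $(1,0)$-form in $w$ on $R$: since $g$ is harmonic in $w$ on $R\setminus\{z,q\}$, we have $\bar\partial_w\partial_w g=0$ there, so $\partial_w g$ is holomorphic away from $z$ and $q$. In a local coordinate $\phi$, the prescribed normalizations $g(\cdot,w_0;z,q)=-\log|\phi-\phi(z)|+(\mathrm{harmonic})$ near $z$ and $g(\cdot,w_0;z,q)=+\log|\phi-\phi(q)|+(\mathrm{harmonic})$ near $q$, together with $2\,\partial\log|\phi-c|=d\phi/(\phi-c)$, show that $\partial_w g$ has a simple pole of residue $-1/2$ at $z$ and a simple pole of residue $+1/2$ at $q$. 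Consequently, since $h$ is holomorphic on $\riem$, the form $h(w)\,\partial_w g(w,w_0;z,q)$ is meromorphic in $w$ on $\riem$ with at most two simple poles, at $z$ and $q$ (when these lie in $\riem$), of residues $-h(z)/2$ and $h(q)/2$ respectively.

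Next I would set up the contour. For $\epsilon>0$ small enough, the super-level set $V_\epsilon:=\{w\in\riem : g_\riem(w,p)>\epsilon\}$ is a relatively compact subdomain of $\riem$ whose boundary is precisely the analytic curve $\Gamma_\epsilon^p$, and $V_\epsilon$ exhausts $\riem$ as $\epsilon\searrow 0$. Hence there is an $\epsilon_0>0$ such that for $0<\epsilon<\epsilon_0$ one has $z\in V_\epsilon$ iff $z\in\riem$, and similarly for $q$. The orientation of $\Gamma_\epsilon^p$ as $\partial V_\epsilon$ agrees with the positive orientation of $\Gamma$ with respect to $\riem$, because $V_\epsilon$ exhausts $\riem$ from the inside.

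Finally, applying the residue theorem to the meromorphic form $h\,\partial_w g$ on $V_\epsilon$ for any $\epsilon<\epsilon_0$ yields
\[
  \int_{\Gamma_\epsilon^p} h(w)\,\partial_w g(w,w_0;z,q) \;=\; 2\pi i\Bigl[-\tfrac{1}{2}\chi_\riem(z)h(z)+\tfrac{1}{2}\chi_\riem(q)h(q)\Bigr] \;=\; \pi i\bigl(\chi_\riem(q)h(q)-\chi_\riem(z)h(z)\bigr),
\]
and dividing by $-\pi i$ produces the desired right-hand side. Since the expression is actually constant in $\epsilon$ for $0<\epsilon<\epsilon_0$, the limit $\epsilon\searrow 0$ is trivial. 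The only points needing care are verifying that $\partial_w g$ is genuinely a holomorphic $(1,0)$-form (a clean consequence of harmonicity in $w$) and tracking the orientation conventions; neither represents a real obstacle.
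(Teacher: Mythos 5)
Your proof is correct and follows exactly the route the paper intends: the paper gives no separate argument beyond remarking that the theorem ``is an immediate consequence of the residue theorem and the fact that $g$ is harmonic in $w$,'' and your write-up simply makes that explicit (the residues $\mp 1/2$ of $\partial_w g$ at $z$ and $q$, the level-curve domains of Green's function as the contours, and the constancy of the integral in $\epsilon$). The sign and orientation bookkeeping in your computation checks out.
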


  We will also need the following well-known reproducing formula for Green's function of $\riem$.
 \begin{theorem}  \label{th:Greens_reproducing}
 Let $R$ be a compact Riemann surface and $\Gamma$ be a
  strip-cutting Jordan curve separating $R$.  Let $\riem$ be one of the components of
  the complement of $\Gamma$.  For any $h \in \mathcal{D}_{\mathrm{harm}}(\riem)$,
  we have
  \[   h(z) = \lim_{\epsilon \searrow 0}  - \frac{1}{\pi i} \int_{\Gamma^p_\epsilon}
   \partial_w g_{\riem}(w,z) \, h(w).  \]
 \end{theorem}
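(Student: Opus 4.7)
The plan is to apply Stokes's theorem twice, reducing the formula to the vanishing of a boundary integral involving $g_\riem$ itself, and then estimate that integral using the decay of $g_\riem(\cdot,z)$ on approaching $\Gamma$.

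First, fix $z \in \riem$ and choose $\epsilon > 0$ small enough that $z$ lies in $A_\epsilon := \{w : g_\riem(w,p) > \epsilon\}$, a smoothly bounded subdomain compactly contained in $\riem$ with analytic boundary $\Gamma_\epsilon^p$.  Excising a coordinate disk $B_\delta(z)$ and applying Stokes's theorem to the $1$-form $h\cdot \partial_w g_\riem(\cdot,z)$ on $A_\epsilon \setminus B_\delta(z)$, I use that $\partial_w g_\riem(\cdot,z)$ is a holomorphic, hence closed, $1$-form in $w$ away from $z$ to obtain $d(h\,\partial_w g_\riem) = \bar\partial h \wedge \partial_w g_\riem$.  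As $\delta \to 0$, the residue $-\tfrac{1}{2}$ of $\partial_w g_\riem$ at $w=z$ contributes $-\pi i\,h(z)$ to the $\partial B_\delta$ integral, yielding
\[
  -\frac{1}{\pi i}\int_{\Gamma_\epsilon^p} h\,\partial_w g_\riem(\cdot,z) \;=\; h(z) \;+\; \frac{i}{\pi} \int_{A_\epsilon} \bar\partial h \wedge \partial_w g_\riem(\cdot,z).
\]
It remains to show the correction on the right vanishes as $\epsilon \to 0$.  A second integration by parts, using that $\bar\partial h$ is $d$-closed by harmonicity of $h$ and that $\bar\partial h \wedge \bar\partial g_\riem = 0$ for type reasons, gives $\bar\partial h \wedge \partial_w g_\riem = -d(g_\riem\,\bar\partial h)$.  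Applying Stokes once more (the pole contribution at $z$ again vanishing because $|g_\riem| = O(\log(1/\delta))$ is dominated by the arclength $O(\delta)$) rewrites the area integral as the curve integral
\[
  \int_{A_\epsilon} \bar\partial h \wedge \partial_w g_\riem(\cdot,z) \;=\; -\int_{\Gamma_\epsilon^p} g_\riem(w,z)\, \bar\partial h(w).
\]

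For the final estimate I would transport the problem to the canonical collar chart $\phi$ with respect to $(\riem,p)$, which maps a collar neighbourhood of $\Gamma$ onto an annulus $\{r<|\zeta|<1\}$ and sends $\Gamma_\epsilon^p$ to the circle $|\zeta| = \rho_\epsilon$ with $\rho_\epsilon \uparrow 1$.  The pullback $\tilde g(\zeta,z)$ of $g_\riem(\cdot,z)$ is a bounded harmonic function on the annulus with vanishing CNT boundary values on $|\zeta|=1$ (every boundary point of a strip-cutting Jordan domain is regular for Green's function), so $\sup_{|\zeta|=\rho_\epsilon} |\tilde g(\zeta,z)| \to 0$.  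The pullback of $\bar\partial h$ is a square-integrable antiholomorphic $1$-form on the annulus, and its Laurent expansion allows explicit control of its circle integrals.  Combining the sup-norm decay of $\tilde g$ with a Cauchy--Schwarz bound for $\int_{|\zeta|=\rho_\epsilon} \tilde g\,\bar\partial \tilde h$ in $L^2([0,2\pi])$ should furnish the required vanishing.

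The main obstacle is this last step, which requires pairing the decay of $g_\riem$ near $\Gamma$ against uniform control of the behaviour of $\bar\partial h$ on the approaching curves $\Gamma_\epsilon^p$.  Dirichlet boundedness of $h$ yields only an area $L^2$ estimate on $\bar\partial h$, and for a general strip-cutting $\Gamma$ the decay rate of $g_\riem$ near the boundary is not explicit.  Should the direct estimate prove delicate, the alternative route is to first establish the formula for the dense subspace of $h \in \mathcal{D}_{\mathrm{harm}}(\riem)$ which extend harmonically to an open neighbourhood of $\overline{\riem}$ in $R$ (such density following from the Askaripour--Barron approximation cited in the introduction), for which every boundary calculation is classical, and then pass to general $h$ by continuity using the interior pointwise bound of Lemma~\ref{th:uniform_controlled_by_L^2} on the compact curves $\Gamma_\epsilon^p$.
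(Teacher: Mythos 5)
The paper itself gives no argument for this theorem (it is quoted as a well-known reproducing formula), so there is nothing internal to compare with; judged on its own, your reduction is correct but the final step has a genuine gap. The two applications of Stokes' theorem are fine: the residue $-\tfrac12$ of $\partial_w g_\Sigma(\cdot,z)$ produces $h(z)$, and $\overline{\partial}h\wedge\partial_w g_\Sigma=-d\bigl(g_\Sigma\,\overline{\partial}h\bigr)$ converts the error into $-\int_{\Gamma^p_\epsilon}g_\Sigma(w,z)\,\overline{\partial}h(w)$. What is missing is an actual proof that this boundary term tends to zero. Your appeal to CNT boundary values is both unjustified as stated and too weak: almost-everywhere nontangential vanishing of a bounded harmonic function does not by itself give sup-norm decay on the level curves, and, more importantly, it gives no rate. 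The usable input is the quantitative comparison $g_\Sigma(w,z)\le C\,g_\Sigma(w,p)=C\epsilon$ on $\Gamma^p_\epsilon$, obtained from the maximum principle on a collar $\{0<g_\Sigma(\cdot,p)<c\}$ together with the fact that both Green's functions vanish at every point of the Jordan curve $\Gamma$. Even with that rate, Cauchy--Schwarz against the area-$L^2$ form $\overline{\partial}h$ does not control the individual circle integrals: writing $M(\rho)$ for the $L^2(d\theta)$ norm of the pulled-back coefficient of $\overline{\partial}h$ on $|\zeta|=\rho$, square integrability over the annulus only yields $\liminf_{\rho\to 1}(1-\rho)M(\rho)^2=0$, i.e.\ smallness along a subsequence of radii, whereas the theorem asserts a full limit.

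Both defects can be repaired, but only with an ingredient you never supply: $\partial_w g_\Sigma(\cdot,z)$ is square-integrable on $\Sigma$ minus a disk about $z$ (for instance from the flux identity $\iint_{\{0<g_\Sigma(\cdot,z)<c\}}dg_\Sigma\wedge\ast dg_\Sigma=2\pi c$; note that a bounded harmonic function need not have finite Dirichlet energy, so this is not free). Granting that, $\overline{\partial}h\wedge\partial_w g_\Sigma(\cdot,z)$ is absolutely integrable on $\Sigma$, so the full limit of your first display exists and equals $h(z)+\tfrac{i}{\pi}\iint_\Sigma\overline{\partial}h\wedge\partial_w g_\Sigma(\cdot,z)$; then either the subsequential vanishing of the boundary term (rate $O(\epsilon)$ times $(1-\rho_j)^{1/2}M(\rho_j)\to 0$) or vanishing of this correction on a dense subspace finishes the proof, since the correction is now a Dirichlet-bounded functional of $h$. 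Your fallback density argument as written trips over exactly this point: continuity of $h\mapsto\int_{\Gamma^p_\epsilon}h\,\partial_w g$ via Lemma \ref{th:uniform_controlled_by_L^2} holds only for each fixed $\epsilon$, with constants that blow up as $\epsilon\searrow 0$, so you cannot interchange $\epsilon\to 0$ with the approximation of $h$ without a bound uniform in $\epsilon$ --- which again comes from the $L^2$ estimate on $\partial_w g_\Sigma(\cdot,z)$ away from the pole.
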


Next we turn to the definitions of the relevant kernel forms. Let $R$ be a compact Riemann surface, and let $g(w,w_0;z,q)$ be the Green's function.  We define the Schiffer kernel to be the bi-differential
 \[   L_R(z,w) = - \frac{1}{\pi i} \partial_z \partial_w g(w,w_0;z,q).    \]
 We also define the Bergman  kernel function
 \[  K_R (z,w) = - \frac{1}{\pi i} \partial_z \overline{\partial}_{{w}} g(w,w_0;z,q).    \]




 For non-compact surfaces $\riem$ with border, with Green's function $g$, we define
 \[  L_\riem(z,w) =  - \frac{1}{\pi i} \partial_z \partial_w g(w,z).  \]
 and
 \[  K_\riem (z,w) = -\frac{1}{\pi i} \partial_z \overline{\partial}_{{w}} g(w,z).  \]
Then the following identity holds.   For any vector $v$ tangent to $\Gamma^{w}_{\epsilon}$ at a point $z$, we have
  \begin{equation} \label{eq:level_curve_identity}
   \overline{K_\riem(z,w)}(\cdot,v) = -L_\riem(z,w)(\cdot,v)
  \end{equation}
 This follows directly from the fact that the one form $\partial_z g (z,w) + \overline{\partial}_{{z}} g(z,w)$
 vanishes on tangent vectors to the level curve $\Gamma^{w}_{\epsilon}$.

  It is well known that for all $h \in A(\riem)$
 \begin{equation}  \label{eq:Bergman_reproducing}
  \iint_\riem K_\riem(z,w) \wedge h(w) = h(z).
 \end{equation}
 For compact surfaces, the reproducing property of the Bergman kernel is established in \cite{Royden}.

\begin{proposition}  \label{pr:bunch_o_identities}
 Let $R$ be a compact Riemann surface with Green's function $g(w,w_0;z,q)$.  Then
 \begin{enumerate}
  \item[$(1)$] $L_R$ and $K_R$ are independent of $q$ and $w_0$.
  \item[$(2)$] $K_R$ is holomorphic in $z$ for fixed $w$, and anti-holomorphic in $w$ for fixed $z$.
  \item[$(3)$] $L_R$ is holomorphic in $w$ and $z$, except for a pole of order two when $w=z$.
\item[$(4)$] $L_R(z,w)=L_R(w,z)$.
  \item[$(5)$] $K_R(w,z)=\overline{K_R(z,w)}$.
 \end{enumerate}
 For non-compact Riemann surfaces $\riem$ with Green's function, $(2)-(5)$ hold with $L_R$ and $K_R$ replaced by $L_\riem$ and $K_\riem$.
\end{proposition}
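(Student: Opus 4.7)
The plan is to establish (1) first, since it simplifies the singularity analysis in (3); then derive the holomorphy content of (2) and (3) from separate harmonicity of $g$; then handle the pole statement in (3); and finally deduce (4) and (5) from the symmetries and reality of $g$.

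For (1), identity (\ref{eq:g_w_0_dependence}) gives $g(w,w_1;z,q)-g(w,w_0;z,q)=-g(w_1,w_0;z,q)$, which is independent of $w$, so $\partial_w g$ and $\bar{\partial}_w g$---and hence $L_R$ and $K_R$---do not depend on $w_0$. For $q$-independence, (\ref{eq:g_interchange_both}) identifies $g(w,w_0;z,q_1)-g(w,w_0;z,q_0)$ with $g(z,q_1;w,w_0)-g(z,q_0;w,w_0)$, and applying (\ref{eq:g_w_0_dependence}) to this expression with the two slot-pairs interchanged rewrites it as
\[
g(w,w_0;z,q_1)-g(w,w_0;z,q_0) = -g(q_1,q_0;w,w_0),
\]
a function of $w$ alone; $\partial_z$ then kills the correction.

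For the holomorphy content of (2) and (3), the essential input is that $g$ is harmonic separately in $w$ (away from $\{w=z,w=q\}$) and, by (\ref{eq:g_interchange_both}), in $z$ (away from $\{z=w,z=w_0\}$). Since mixed partials in disjoint variables commute, one reorders the relevant third-order derivatives so that $\partial_z\bar{\partial}_z g = 0$ or $\partial_w\bar{\partial}_w g = 0$ is encountered, concluding $\bar{\partial}_z K_R = \partial_w K_R = 0$ and $\bar{\partial}_z L_R = \bar{\partial}_w L_R = 0$ off the singular set. For the pole structure in (3), condition (2) of the definition of $g$ yields a local decomposition $g = -\log|w-z| + H(w,z)$ near $z=w$ with $H$ smooth; direct computation of $\partial_z\partial_w(-\log|w-z|)$ produces a $(w-z)^{-2}$ factor, exhibiting the order-$2$ diagonal pole of $L_R$. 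The a priori singular loci of $\partial_z\partial_w g$ at $w=q$ and $z=w_0$ are absent because the local singular terms there ($+\log|w-q|$ and $+\log|z-w_0|$, respectively) are each independent of the complementary variable, so the mixed partial annihilates them; alternatively, (1) allows one to slide $q$ and $w_0$ away from any given pair $(z,w)$.

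For (4), the symmetry (\ref{eq:g_interchange_both}) gives $g(z,q;w,w_0)=g(w,w_0;z,q)$, and commutativity of $\partial_z$ and $\partial_w$ on this scalar then yields $L_R(w,z)=L_R(z,w)$. For (5), reality of $g$ gives $\overline{\partial_z\bar{\partial}_w g}=\bar{\partial}_z\partial_w g$, which combined with (\ref{eq:g_interchange_both}) and the factor $\overline{-1/(\pi i)}=1/(\pi i)$ delivers the Hermitian identity for $K_R$, once the standard conventions for conjugating a bi-differential and swapping its two slots are in place. The main technical point is the pole analysis in (3); the remaining items are essentially bookkeeping using the symmetries and reality of $g$.
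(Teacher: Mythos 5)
Your argument is correct and follows essentially the same route as the paper's proof: (1) from the dependence identity (\ref{eq:g_w_0_dependence}) combined with the interchange symmetry (\ref{eq:g_interchange_both}), (2) and (3) from harmonicity of $g$ in each variable together with the logarithmic singularity at $w=z$, and (4), (5) from (\ref{eq:g_interchange_both}) and the reality of $g$ applied directly to the definitions -- your treatment is in fact slightly more explicit than the paper's on the $q$-independence and on why no singularities occur at $w=q$ and $z=w_0$. The only item you omit is the closing observation that for a bordered surface $\riem$ the statements $(2)$--$(5)$ for $L_\riem$ and $K_\riem$ follow by the identical argument from the harmonicity, logarithmic singularity, and symmetry $g_\riem(z,w)=g_\riem(w,z)$ of its Green's function; this is a one-line addition.
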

\begin{remark}
 The symmetry statements (4) and (5) are formally expressed
 as follows.  If $D:R \times R \rightarrow R \times R$ is the map $D(z,w)=(w,z)$ then
 $D^*L =L \circ D$ and $D^*K =\overline{K \circ D}$.
\end{remark}
\begin{proof} It follows immediately from (\ref{eq:g_w_0_dependence})
 that
 \[ \partial_w g(w,w_1;z,q) = \partial_w g(w,w_0;z,q) \ \ \text{and} \ \ \partial_{\bar{w}} g(w,w_1;z,q) = \partial_{\bar{w}} g(w,w_0;z,q),  \]
 so $L_R$ and $K_R$ are independent of $w_0$.  Applying
 (\ref{eq:g_interchange_both}) shows that similarly $\partial_w g$ and $\partial_{\bar{w}}$
 are independent of $q$, and hence the same holds for $L_R$ and $K_R$.  This demonstrates
 that property (1) holds.

 Since $g$ is harmonic in $w$, $\partial_{w} \overline{\partial}_{{w}} g(w,w_0;z,q) =0$ so $K_R$ is
 anti-holomorphic in $w$.  As observed above, (\ref{eq:g_interchange_w}) shows that $g$ is also harmonic in $z$, so we similarly have that $K_R$ is holomorphic in $z$.  This demonstrates (2).

 Similarly harmonicity of $g$ in $z$ and $w$ implies that $L_R$ is holomorphic in $z$ and $w$.
 The fact that $L_R$ has a pole of order two at $z$ follows from the fact that $g$ has a logarithmic
 singularity at $w=z$.  This proves (3).

 Properties (4) and (5) follow from equation (\ref{eq:g_interchange_both})
 applied directly to the definitions of $L_R$ and $K_R$.

 The non-compact case follows similarly from the harmonicity with logarithmic singularity of $g_\riem$, and the symmetry $g_\riem(z,w) = g_\riem(w,z)$
\end{proof}
One can find the constant at the pole of $L$ from the definition.  Expressed in a local holomorphic coordinates $\eta = \phi(w)$ near a fixed point $\zeta = \phi(z)$,
\begin{equation}  \label{eq:L_expressed_locally}
 (\phi^{-1} \times \phi^{-1})^* L(z,w) = \left(-\frac{1}{2\pi i} \frac{1}{(\zeta-\eta)^2}  + H(\eta) \right) d\zeta d\eta
\end{equation}
where $H(\eta)$ is holomorphic in a neighbourhood of $\zeta$.  In most sources \cite{BergmanSchiffer}, the integral kernel is expressed as a function (rather than a form) to be integrated against the Euclidean area form $dA_\eta = d\bar{\eta} \wedge d{\eta}/{2i}$.  E.g. if $\overline{\alpha(w}$ is a holomorphic one-form given in local coordinates by $(\phi^{-1})^* \alpha (\eta)= \overline{f(\eta)} d{\overline{\eta}}$ then we obtain the local expression
\[ (\phi^{-1} \times \phi^{-1})^{*} L(z,w) \wedge_\eta \phi^{-1*}\alpha(w)
    = \left( \frac{1}{\pi} \frac{1}{(\zeta-\eta)^2} + H(\eta) \right) \overline{f(\eta)}
     \, d\zeta \, dA_\eta \]
which agrees with the classical normalization \cite{BergmanSchiffer}.

 Now let $R$ be a compact Riemann surface and let $\Gamma$ be a strip-cutting Jordan curve.  Assume that $\Gamma$ separates $R$ into two surfaces $\riem_1$ and $\riem_2$.  We will mostly be concerned with the case that $\Gamma$ is a quasicircle.

 Let $A(\riem_1 \cup \riem_2)$ denote the set of one-forms on $\riem_1 \cup \riem_2$ which are
 holomorphic and square integrable.  Note that we do not require the existence of a holomorphic or continuous extension to the closure of $\riem_1 \cup \riem_2$.
 For $k=1,2$ define the restriction operators
 \begin{align*}
  \mathrm{Res}(\riem_k):A(R) & \rightarrow A(\riem_k) \\
  \alpha & \mapsto \left. \alpha \right|_{\riem_k}
 \end{align*}
 and
 \begin{align*}
  \text{Res}_0(\riem_k): A(\riem_1 \cup \riem_2) & \rightarrow A(\riem_k) \\
    \alpha & \mapsto \left. \alpha \right|_{\riem_k}.
 \end{align*}
 It is obvious that these are bounded operators.
\begin{definition}
For $k=1,2$, we define the Schiffer comparison operators
 \begin{align*}
  T(\riem_k): \overline{A(\riem_k)} & \rightarrow A(\riem_1 \cup \riem_2)  \\
  \overline{\alpha} & \mapsto \iint_{\riem_k} L_R(\cdot,w) \wedge \overline{\alpha(w)}.
 \end{align*}
\\
and
\begin{align*}
 S(\riem_k): A(\riem_k) & \rightarrow A(R) \\
  \alpha & \mapsto \iint_{\riem_k}  K_R(\cdot,w) \wedge \alpha(w).
\end{align*}
Also, we define for $j,k \in \{1,2\}$
 \[  T(\riem_j,\riem_k) = \text{Res}_0(\riem_k) T(\riem_j): \overline{A(\riem_j)} \rightarrow A(\riem_k).  \]
\end{definition}

Note that the operator $S$ is bounded and the image is clearly in
$A(R)$. Moreover, for $j \neq k$, the integral kernel of this operator is nonsingular, but if $j =k$, then the kernel is singular.  We will show below that the image is in fact in $A(\riem_k)$. \\

First we require an identity of Schiffer.  Although this identity was only stated for analytically bounded domains, it is easily seen to hold in greater generality.
 \begin{theorem}  \label{th:Schiffer_vanishing_identity}
  For all $\overline{\alpha} \in \overline{A(\riem)}$
  \[   \iint_{\riem,w}  L_\riem(z,w) \wedge \overline{\alpha(w)}=0. \]
 \end{theorem}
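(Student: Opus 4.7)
The plan is a Stokes' theorem argument reduced to the case of forms with extra regularity, followed by a density argument. The key algebraic observation is that $\bar{\alpha}$ is a $(0,1)$-form on a Riemann surface, so for pure degree reasons both $d\bar{\alpha} = \bar{\partial}\bar{\alpha}$ and $\bar{\partial}_w g_{\riem}(w,z) \wedge \bar{\alpha}(w)$ vanish (every $(0,2)$-form in complex dimension one is zero). Writing $d_w = \partial_w + \bar{\partial}_w$, this yields the exact-form identity
\[\partial_w g_{\riem}(w,z) \wedge \bar{\alpha}(w) \;=\; d_w g_{\riem}(w,z) \wedge \bar{\alpha}(w) \;=\; d_w\bigl(g_{\riem}(w,z)\,\bar{\alpha}(w)\bigr).\]

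First I would establish the identity for those $\bar{\alpha}$ that admit an antiholomorphic extension to some open neighbourhood of $\overline{\riem}$ in $R$. Fix $z \in \riem$ and apply Stokes' theorem on $\riem \setminus \overline{D_{\delta}(z)}$ for small $\delta > 0$. The boundary contribution along $\partial \riem$ vanishes because $g_{\riem}(\cdot,z)$ extends continuously to zero on $\partial \riem$, while the contribution along $\partial D_{\delta}(z)$ is $O(\delta\,|\log \delta|)$, using the logarithmic singularity $g_{\riem}(w,z) = -\log|w-z| + O(1)$ near $z$ together with the local boundedness of $\bar{\alpha}$. Letting $\delta \to 0$ yields
\[F(z) \;:=\; \iint_{\riem} \partial_w g_{\riem}(w,z) \wedge \bar{\alpha}(w) \;=\; 0, \qquad z \in \riem.\]
Since $F$ is identically zero, differentiating in $z$ and exchanging with the integral (justified in the principal-value sense by Calder\'on--Zygmund theory, given the $(\zeta-\eta)^{-2}$ local form of $L_{\riem}$ recorded in \eqref{eq:L_expressed_locally}) produces the desired vanishing of $\iint_{\riem} L_{\riem}(z,w) \wedge \bar{\alpha}(w)$.

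For arbitrary $\bar{\alpha} \in \overline{A(\riem)}$, I would invoke the Askaripour--Barron approximation result for $L^2$ one-forms on nested Riemann surfaces cited in the introduction: antiholomorphic forms extending to an open neighbourhood of $\overline{\riem}$ in $R$ are dense in $\overline{A(\riem)}$. Combined with the $L^2$-boundedness of the Calder\'on--Zygmund operator $\bar{\alpha} \mapsto \iint_{\riem} L_{\riem}(\cdot,w) \wedge \bar{\alpha}(w)$, this extends the vanishing identity from the dense subspace to all of $\overline{A(\riem)}$.

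The main obstacle is the Stokes step itself when $\partial \riem$ is only a quasicircle: the boundary integral $\int_{\partial \riem} g_{\riem}\,\bar{\alpha}$ is not directly meaningful because $\partial \riem$ is generally non-rectifiable and a mere $L^2$ form need not possess boundary traces on such a curve. The Askaripour--Barron reduction precisely sidesteps this difficulty, letting one argue Stokes on the analytically smooth side and then pass to the limit in the Hilbert-space topology where the Schiffer operator is already known to be bounded.
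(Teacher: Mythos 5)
Your overall architecture matches the paper's: first prove the identity for $\overline{\alpha}$ admitting an antiholomorphic extension past the closure via a Stokes-type argument exploiting the vanishing of Green's function at the border, then pass to general $\overline{\alpha}\in\overline{A(\riem)}$ using the Askaripour--Barron density result together with $L^2$-boundedness of the operator. Your treatment of the regular case is a legitimate variant: the paper applies Stokes directly to the kernel, using $\partial_z g$ as the $w$-primitive and excising a small disc about the singularity, whereas you first show $F(z)=\iint_{\riem}\partial_w g_{\riem}(w,z)\wedge\overline{\alpha(w)}=0$ using the primitive $g_{\riem}\,\overline{\alpha}$ and then differentiate in $z$ under the integral in the principal-value sense; the latter interchange is standard Calder\'on--Zygmund material but should be written out, since the principal value's independence of the excised neighbourhoods is part of what is at stake here.

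The genuine gap is the one you flag yourself and then resolve incorrectly. For a quasicircle $\Gamma$ the boundary contribution $\int_{\partial\riem} g_{\riem}(\cdot,z)\,\overline{\alpha}$ in your Stokes step is simply not defined ($\Gamma$ is in general non-rectifiable), and Askaripour--Barron does not ``sidestep'' this: it approximates the \emph{form} $\overline{\alpha}$ by forms living on a larger surface, but does nothing to regularize the \emph{curve} over which you propose to integrate. The paper's fix is a separate and essential step: the integral $\iint_{\riem}L_{\riem}(z,w)\wedge\overline{\alpha(w)}$ is conformally invariant and depends only on $\riem$ and its Green's function, so one may realize $\riem$ inside its double, where the border is analytic; only after this reduction does one run the Stokes argument (with interior level curves of Green's function as approximating contours) and then the density argument. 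Alternatively, your argument can be repaired without passing to the double by replacing $\partial\riem$ with the level curves $\{g_{\riem}(\cdot,z)=\epsilon\}$: there the boundary term equals $\epsilon\int_{\{g_{\riem}(\cdot,z)=\epsilon\}}\overline{\alpha}$, and since $\overline{\alpha}$ is closed this period is independent of $\epsilon$, so the term is $O(\epsilon)$. Note that the naive bound (supremum of $g_{\riem}$ times the length of the approximating curve) does not suffice, because for non-rectifiable $\Gamma$ the lengths of those curves may blow up. Some such device must be supplied; as written the proof is incomplete at exactly this point. A minor slip: $d\overline{\alpha}=0$ holds because $\partial\overline{\alpha}=0$ by antiholomorphy of the coefficient, not ``for pure degree reasons''; only $\overline{\partial}\,\overline{\alpha}$ and $\overline{\partial}_w g_{\riem}\wedge\overline{\alpha}$ vanish by degree.
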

 \begin{proof}

  We assume momentarily that $\alpha$ has a holomorphic extension to the closure of $\riem$ and that $\Gamma$
  is an analytic curve.
 Let $z \in \riem$ be fixed but arbitrary, and choose a chart $\zeta$
  near $z$ such that $\zeta(z)=0$.  Write $\overline{\alpha}$ locally as $\overline{f(\zeta)} d\overline{\zeta}$
  for some holomorphic function $f$.  Let $C_r$ be the curve $|\zeta|=r$, and denote
  its image in $\riem$ by $\gamma_r$.  Fixing  $p \in \riem$ and using Stokes' theorem yield
  \begin{equation*}
   \iint_{\riem,w} L_{\riem}(z,w) \wedge \overline{\alpha(w)} =
   - \lim_{\epsilon \searrow 0}  \frac{1}{\pi i} \int_{\Gamma^p_\epsilon}
   \partial_z g(w,z) \overline{\alpha(w)} + \lim_{r \searrow 0}
   \frac{1}{\pi i} \int_{C_r}
    \partial_z g(w,z) \overline{\alpha(w)}.
  \end{equation*}
  The first term goes to zero uniformly as $\epsilon \rightarrow 0$.  Writing the
  second term in coordinates $\eta = \phi(w)$ in a neighbourhood of $\zeta$ for
  fixed $\zeta$ (see equation (\ref{eq:L_expressed_locally}))  we obtain
  \begin{align*}
   \iint_{\riem,w} L_{\riem}(z,w) \wedge \overline{\alpha(w)} & = \lim_{r \searrow 0}
    -\frac{1}{2 \pi i} \int_{C_r}
     \left(  \frac{1}{\eta} + h(\zeta) \right) \overline{f(\eta)} d\bar{\eta}
  \end{align*}
  where $h$ is some harmonic function in a neighbourhood of $0$. Now since both terms on the right hand side go to zero, we  obtain the desired result.

  Note that this shows that the principal value integral can be taken with respect
  to any local coordinate with the same result.  Furthermore, the integral is conformally invariant.  Thus, we may assume that $\riem$ is a subset of its
  double and $\Gamma$ is analytic. By \cite[Proposition 2.2]{AskBar}, the set of
  holomorphic one-forms on an open neighbourhood of the closure of $\riem$ is dense in $A(\riem)$.
  The $L^2$ boundedness of the $L_\riem$ operator yields the desired result.
 \end{proof}

 This implies that for $R$, $\Gamma$, and $\riem$ as in Theorem \ref{th:Schiffer_vanishing_identity}, we can write
 \begin{equation}  \label{eq:nonsingular_Schiffer}
  [T(\riem,\riem) \alpha](z)= \iint_{\riem,w} \left(L_R(z,w) - L_\riem(z,w) \right)
  \wedge \overline{\alpha(w)},  \end{equation}
 which has the advantage that the integral kernel is non-singular.
 \begin{remark}
  The above expression shows that the operator
  $T(\riem,\riem)$ is well-defined.  The subtlety is that the principal
  value integral might depend on the choice of coordinates, which determines
  the ball which one removes in the neighbourhood of the singularity. Since the
  integrand is not in $L^2$, different exhaustions of $\riem$ might in principle lead to different
  values of the integral.

  However the proof of Theorem \ref{th:Schiffer_vanishing_identity}
  shows that the integral of $L_\riem$ is independent of the choice of coordinate
  near the singularity.  Since the integrand of (\ref{eq:nonsingular_Schiffer})
  is $L^2$ bounded, it is independent of the choice of exhaustion; combining this
  with Theorem \ref{th:Schiffer_vanishing_identity} shows that the integral in the
  definition of $T(\riem,\riem)$ is independent of the choice of exhaustion.
  One may also obtain this fact from the general theory of Calder\'on-Zygmund operators on
  manifolds, see \cite{Seeley}.
 \end{remark}

 \begin{theorem}  \label{th:T_boundedness}
  Let $R$ be a compact Riemann surface, and $\Gamma$ be a strip-cutting Jordan curve in $R$.  Assume that $\Gamma$ separates $R$ into two surfaces $\riem_1$ and $\riem_2$.   Then $T(\riem_j) \overline{\alpha} \in A(\riem_1 \cup \riem_2)$ for all $\alpha \in A(\riem_j)$ for $j=1,2$.
  Furthermore for all $j,k \in \{1,2\}$,
  $T(\riem_j)$ and $T(\riem_j,\riem_k)$ are bounded operators.
 \end{theorem}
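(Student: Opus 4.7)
My plan is to establish holomorphy of $T(\riem_j)\overline{\alpha}$ on $\riem_1 \cup \riem_2$ and $L^2$-boundedness separately, then obtain boundedness of $T(\riem_j,\riem_k) = \mathrm{Res}_0(\riem_k) \circ T(\riem_j)$ by composition with the bounded restriction map.

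For holomorphy, I would split into the off-diagonal ($z \in \riem_k$, $k \neq j$) and diagonal ($z \in \riem_j$) cases. In the off-diagonal case, any compact $K \subset \riem_k$ sits at positive distance from the compact set $\overline{\riem_j}$ in $R$, so $L_R(z,w)$ and all its $z$-derivatives are uniformly bounded on $K \times \overline{\riem_j}$. Together with $\alpha \in L^2(\riem_j) \subset L^1(\riem_j)$, this justifies differentiation under the integral, and $\overline{\partial}_z L_R = 0$ (Proposition \ref{pr:bunch_o_identities}(3)) yields holomorphy of $T(\riem_j,\riem_k)\overline{\alpha}$. In the diagonal case, I would invoke Theorem \ref{th:Schiffer_vanishing_identity} to pass to the regularized representation (\ref{eq:nonsingular_Schiffer}). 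The local principal parts of $L_R$ and $L_{\riem_j}$ agree in any chart (both equal $-(2\pi i)^{-1}(\zeta-\eta)^{-2}\,d\zeta\,d\eta$ by the logarithmic singularity of Green's function), so $L_R - L_{\riem_j}$ extends jointly holomorphically across the diagonal on $\riem_j \times \riem_j$. Holomorphy of $T(\riem_j,\riem_j)\overline{\alpha}$ then follows via Morera after Fubini on small triangles in $\riem_j$.

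For boundedness, I would realize $T(\riem_j)$ as the restriction of a global singular integral operator. Define
\[ \tilde{T}\beta(z) = \mathrm{p.v.}\iint_{R,w} L_R(z,w) \wedge \beta(w) \]
acting on $L^2$ $(0,1)$-forms $\beta$ on $R$. By the local expansion (\ref{eq:L_expressed_locally}), $\tilde{T}$ coincides in each coordinate chart with the classical Beurling transform up to a smooth bounded perturbation. The Calder\'on--Zygmund theory on compact manifolds (\cite{Seeley}), applied via a finite partition of unity on $R$, then gives a bound $\|\tilde{T}\beta\|_{L^2(R)} \leq C\|\beta\|_{L^2(R)}$. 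Given $\alpha \in A(\riem_j)$, extending $\overline{\alpha}$ by zero produces $\overline{\alpha}_{\mathrm{ext}} \in L^2_{(0,1)}(R)$ of the same norm, and $T(\riem_j)\overline{\alpha}$ agrees with $\tilde{T}\overline{\alpha}_{\mathrm{ext}}$ off the measure-zero set $\Gamma$. Combined with the holomorphy just established, this yields $T(\riem_j)\overline{\alpha} \in A(\riem_1 \cup \riem_2)$ with $\|T(\riem_j)\overline{\alpha}\|_{A(\riem_1 \cup \riem_2)} \leq C\|\alpha\|_{A(\riem_j)}$; boundedness of $T(\riem_j,\riem_k)$ is then immediate from that of $\mathrm{Res}_0(\riem_k)$.

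The principal obstacle is the Calder\'on--Zygmund $L^2$ estimate for $\tilde{T}$: one must identify $L_R$ chart-by-chart as a bounded perturbation of the Beurling kernel and assemble local estimates globally. Both steps are standard, and the surface-specific input is only the pole structure (\ref{eq:L_expressed_locally}); the technical work is absorbed by the reference \cite{Seeley}. Notably, this step does not require any regularity of $\Gamma$ beyond the hypothesis that it is a strip-cutting Jordan curve, since $\tilde{T}$ is defined intrinsically on $R$.
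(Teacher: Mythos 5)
Your proposal is correct and follows essentially the same route as the paper: holomorphy via the desingularized representation (\ref{eq:nonsingular_Schiffer}) (off-diagonal kernel non-singular, diagonal kernel regularized by $L_{\riem_j}$ using Theorem \ref{th:Schiffer_vanishing_identity}), and $L^2$ boundedness by viewing $L_R$ locally as the Beurling kernel plus a bounded error and invoking Seeley's Calder\'on--Zygmund theory on compact manifolds, with $T(\riem_j,\riem_k)$ bounded by composing with $\mathrm{Res}_0$. The only stray remark is your appeal to $\Gamma$ having measure zero, which is not guaranteed for a general strip-cutting Jordan curve, but it is also not needed since the pointwise agreement on $\riem_1\cup\riem_2$ already gives the norm estimate.
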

 \begin{proof} {  Fix $j$ and let $k \in \{ 1,2 \}$ be such that
 $k \neq j$.
 By  \eqref{eq:nonsingular_Schiffer}
 we observe that
 \begin{equation}\label{desingularisation of Tj}
     T(\riem_j) \overline{\alpha} (z) = \left\{
       \begin{array}{cc}
       \iint_{\riem_{j,w}}  L_R(z,w)
  \wedge \overline{\alpha(w)} & z \in \riem_k \\
       \iint_{\riem_{j,w}} \left(L_R(z,w) - L_{\riem_j}(z,w) \right)
  \wedge \overline{\alpha(w)} & z \in \riem_{j}  \end{array} \right.
 \end{equation}
 }
 The integrand in both terms \eqref{desingularisation of Tj} is non-singular and holomorphic in $z$ for each $w\in \riem_j$, and
 {furthermore both integrals are locally bounded in $z$}.
 Therefore the holomorphicity of $T(\Sigma_j)\overline{\alpha}$ follows by moving the $\overline{\partial}$ inside \eqref{eq:nonsingular_Schiffer}, and using the holomorphicity of the integrand. This also implies the holomorphicity of $T(\riem_j,\riem_k).$

Regarding the boundedness, the operator $T(\Sigma_j)$ is defined by integration against the $L$-Kernel which in local coordinates is given by $\frac{1}{\pi(\zeta-\eta)^2}$, modulo a holomorphic function. Since the singular part of the kernel is a Calder\'on-Zygmund kernel we can use the theory of singular integral operators on general compact manifolds, developed by R. Seeley in \cite{Seeley} to conclude that that, the operators with kernels such as $L_R(z,w)$ are bounded on $L^p$ for $1<p<\infty$. The boundedness of $T(\riem_j, \riem_k)$ follows from this and the fact that $R_0(\riem_j)$ is also bounded.

 \end{proof}

\end{subsection}
\begin{subsection}{Attributions} \label{se:attributions}
 The comparison operators $T(\riem_j,\riem_k)$ were studied extensively by Schiffer \cite{Schiffer_first}, \cite{DurenZalcman_Schiffer_collected_I,DurenZalcman_Schiffer_collected_II}, and also together with other authors, e.g. Bergman and Schiffer \cite{BergmanSchiffer}. In the setting of planar domains, a comprehensive outline of the theory was developed in a chapter in
\cite{Courant_Schiffer}. The comparison theory for Riemann surfaces
can be found in Schiffer and Spencer \cite{Schiffer_Spencer}.  See also our review paper \cite{SchippersStaubach_comparison}.

 In this section, we demonstrate some necessary identities for the Schiffer operator.  Most of the identities were stated by for example Bergman and Schiffer \cite{BergmanSchiffer}, Schiffer \cite{Courant_Schiffer}, and Schiffer and Spencer \cite{Schiffer_Spencer} for the case of analytic boundaries. Versions can be found in different settings, for example multiply-connected domains in the sphere, nested multiply-connected domains, and Riemann surfaces.

 On the other hand, we introduce here several identities involving the adjoints of the operators, which
 Schiffer seems not to have been aware of.  These are Theorem \ref{th:Bergman_comparison_restriction},
 Theorem \ref{th:T_adjoint}, and Theorem \ref{th:two_kernels_adjoint_identity}.  The introduction of the adjoint operators has significant clarifying power. Proofs of the remaining identities are included because it is necessary to show
 that they hold for regions bordered by quasicircles.



 Here are a few words on terminology. The Beurling transform in the plane is defined by $$B_\mathbb{C} f(z)=\frac{-1}{\pi}\mathrm{PV}\iint_{\mathbb{C}}\frac{f(\zeta)}{(z-\zeta)^2}\, dA(\zeta).$$ Schiffer refers to this operator as the Hilbert transform, due to the fact that the operator in question behaves like the actual Hilbert transform $$\mathcal{H}f(x):= \mathrm{PV}\int_{\mathbb{R}}\frac{f(y)}{x-y}\,dy.$$  The term ``Hilbert transform'' is also the one used in O. Lehto's classical book on Teichm\"uller theory \cite{Lehto}. Indeed the integrands of both operators exhibit a similar type of singularity in their respective domains of integration and both fall into the general class of Calder\' on-Zygmund singular integral operators. For such operators, one has quite a complete and satisfactory theory, both in the plane and on differentiable manifolds.

 We shall refer to the restriction of the Beurling transform to anti-holomorphic functions on
 fixed domain as the {\it{Schiffer operator}}.  Here, of course, we express this equivalently as an operator on anti-holomorphic one-forms.
\end{subsection}

\begin{subsection}{Identities for comparison operators}

 \begin{theorem}  \label{th:Bergman_comparison_restriction}
  Let $R$ be a compact surface and let $\Gamma$ be a strip-cutting Jordan curve separating $R$ into two components,
  one of which is $\riem$.  Then $S(\riem)= \mathrm{Res}(\riem)^*$,  where $^\ast$ denotes the adjoint operator.
 \end{theorem}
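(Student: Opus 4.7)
The plan is to recognize $S(\riem)$ as the composition of extension-by-zero from $\riem$ to $R$ with the orthogonal projection $P\colon L^2(R) \to A(R)$; once this reformulation is in hand, the adjoint identity becomes essentially a tautology.

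First I would verify that the integral operator $h \mapsto \iint_R K_R(\cdot, w) \wedge h(w)$ coincides with the orthogonal projection $P\colon L^2(R) \to A(R)$. Since $L^2(R)$ decomposes orthogonally into three pieces---namely $A(R)$, the space of $(0,1)$-forms, and the orthogonal complement of $A(R)$ inside the $(1,0)$-forms---it suffices to check the action on each piece. On $A(R)$ the reproducing formula \eqref{eq:Bergman_reproducing} (applied to $R$) gives the identity. On any $(0,1)$-form the integrand is a wedge of two $(0,1)$-forms in $w$ and vanishes pointwise. For a $(1,0)$-form $h$ in the orthogonal complement of $A(R)$, the conjugation symmetry $K_R(w,z) = \overline{K_R(z,w)}$ from Proposition~\ref{pr:bunch_o_identities}(5) implies that $\overline{K_R(z, \cdot)}$ is an element of $A(R)$ for each fixed $z$; the orthogonality of $h$ to this element, unpacked through the definition of the inner product, forces $\iint_R K_R(z, w) \wedge h(w) = 0$. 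This identifies the operator with kernel $K_R$ as $P$.

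With this in hand, for $\alpha \in A(\riem)$ let $\tilde{\alpha} \in L^2(R)$ denote its extension by zero to all of $R$. By the definition of $S(\riem)$,
\[
  S(\riem)\alpha \;=\; \iint_\riem K_R(\cdot, w) \wedge \alpha(w) \;=\; \iint_R K_R(\cdot, w) \wedge \tilde{\alpha}(w) \;=\; P\tilde{\alpha}.
\]
For any $\beta \in A(R)$, using that $P$ is an orthogonal projection whose range contains $\beta$, and that $\tilde\alpha$ vanishes outside $\riem$ and agrees with $\alpha$ there,
\[
  (S(\riem)\alpha, \beta)_{A(R)}
    = (P\tilde\alpha, \beta)_{A(R)}
    = (\tilde\alpha, \beta)_{L^2(R)}
    = (\alpha, \mathrm{Res}(\riem)\beta)_{A(\riem)},
\]
which is the desired identity $S(\riem) = \mathrm{Res}(\riem)^*$.

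The only non-routine ingredient is the identification of $K_R$ with the projection kernel, and the main place where care is required is the $(1,0)/(0,1)$ type bookkeeping together with the conjugation symmetry. No approximation or boundary-regularity issues arise here, because $R$ is compact, $A(R)$ is finite-dimensional, $K_R$ is a smooth bidifferential, and each integral appearing above is absolutely convergent so that Fubini applies without further comment.
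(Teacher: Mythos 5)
Your proof is correct, but it takes a more structural route than the paper. The paper's proof is a three-line direct computation: it writes out $(S(\riem)\alpha,\beta)_R$ as a double integral, interchanges the order of integration (Fubini, justified by smoothness and boundedness of $K_R$), and then applies the Hermitian symmetry $K_R(w,z)=\overline{K_R(z,w)}$ together with the reproducing property to collapse the inner integral to $\overline{\beta(\zeta)}$, yielding $(\alpha,\mathrm{Res}\,\beta)_\riem$. You instead first identify the operator with kernel $K_R$ as the orthogonal Bergman projection $P:L^2(R)\to A(R)$, checking it on the orthogonal decomposition of $L^2(R)$ into $A(R)$, the $(0,1)$-forms, and the orthocomplement of $A(R)$ among $(1,0)$-forms (the last case using the symmetry of $K_R$ and the definition of the inner product, exactly as you indicate), and then obtain the adjoint identity abstractly from $S(\riem)=P\circ(\text{extension by zero})$, self-adjointness of $P$, and $P\beta=\beta$. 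Both arguments ultimately rest on the same two ingredients --- the reproducing property of $K_R$ on the compact surface (cited from Royden in the paper) and Proposition \ref{pr:bunch_o_identities}(5) --- but your version buys a reusable structural fact (that $S(\riem)$ is the Bergman projection precomposed with extension by zero, so adjointness to restriction is automatic), at the cost of the extra verification that the kernel operator is the projection; the paper's version is shorter, handling everything in one Fubini computation. Your remark that extension by zero sidesteps any issue about the measure of $\Gamma$ is also correct, and the smoothness and finite rank of $K_R$ on a compact surface make all convergence and interchange questions harmless in both arguments.
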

 \begin{proof}
  Let $\alpha \in A(\riem)$ and $\beta \in A(R)$.  Then, using the reproducing property of $K_R$,
  \begin{align*}
   (S(\riem) \alpha, \beta)_R & = \iint_{R,z} \iint_{\riem,\zeta} K_R(z,\zeta) \wedge_\zeta \alpha(\zeta)
    \wedge_z \overline{\beta(z)} \\
    & = \iint_{\riem_,\zeta} \iint_{R,z} \overline{K_R(\zeta,z)} \wedge_z \overline{\beta(z)} \wedge
     \overline{\alpha(\zeta)} \\
    & = \iint_{\riem,\zeta} \overline{\beta(\zeta)} \wedge \alpha(\zeta) =(\alpha,\mathrm{Res}\,\beta)_{\riem}.
  \end{align*}
  Note that interchange of order of integration is legitimate by Fubini's theorem, due to the analyticity and boundedness of the the Bergman kernel.
 \end{proof}

 Define
 \begin{align*}
  \overline{T}(\riem_j,\riem_k):A(\riem_j) & \rightarrow \overline{A(\riem_k)} \\
  h & \mapsto \overline{T(\riem_j,\riem_k) \overline{h}}.
 \end{align*}
 and similarly for $\overline{S}(\riem_k)$.

  \begin{theorem} \label{th:T_adjoint} Let $R$ be a compact surface.  Let $\Gamma$ be a strip-cutting Jordan curve with measure zero, and assume that the complement of $\Gamma$ consists of two connected components $\riem_1$ and $\riem_2$.  Then
 \[   T(\riem_j,\riem_k)^\ast  = \overline{T}(\riem_k,\riem_j).  \]

 \end{theorem}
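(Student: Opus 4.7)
The identity $T(\riem_j,\riem_k)^\ast = \overline{T}(\riem_k,\riem_j)$ is equivalent to the sesquilinear equality
\[
(T(\riem_j,\riem_k)\overline{\alpha},\beta)_{A(\riem_k)} \;=\; (\overline{\alpha},\overline{T(\riem_k,\riem_j)\overline{\beta}})_{\overline{A(\riem_j)}}
\]
for all $\alpha\in A(\riem_j)$ and $\beta\in A(\riem_k)$. Both sides are continuous sesquilinear forms by the boundedness statement in Theorem \ref{th:T_boundedness}, so the plan is to verify the identity on a dense subspace and extend by continuity. Following the density argument used in the proof of Theorem \ref{th:Schiffer_vanishing_identity} (Askaripour--Barron), it suffices to treat $\alpha$ and $\beta$ which extend holomorphically to open neighbourhoods of $\overline{\riem_j}$ and $\overline{\riem_k}$ respectively.

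For such $\alpha,\beta$, and for small $\epsilon>0$, let $\riem_j^\epsilon\Subset \riem_j$ be the subsurface bounded by the level curve $\Gamma^{p_j}_\epsilon$, and similarly $\riem_k^\epsilon$. The sets $\overline{\riem_j^\epsilon}$ and $\overline{\riem_k^\epsilon}$ are disjoint compact subsets of $R$, so the diagonal singularity of $L_R$ is avoided and the integrand
\[
L_R(z,w)\wedge \overline{\alpha(w)}\wedge \overline{\beta(z)}
\]
is bounded on $\riem_k^\epsilon\times \riem_j^\epsilon$. Fubini's theorem therefore applies, and combining it with the symmetry $L_R(z,w)=L_R(w,z)$ from Proposition \ref{pr:bunch_o_identities}(4) (noting that moving the degree-two bidifferential $L_R$ past the degree-one form $\overline{\alpha(w)}$ produces a trivial sign) yields
\[
\iint_{\riem_k^\epsilon,z}\!\!\iint_{\riem_j^\epsilon,w} L_R(z,w)\wedge \overline{\alpha(w)}\wedge \overline{\beta(z)}
= \iint_{\riem_j^\epsilon,w}\!\!\iint_{\riem_k^\epsilon,z} L_R(w,z)\wedge \overline{\beta(z)}\wedge \overline{\alpha(w)}.
\]
Tracing through the definitions of the Hodge inner products on $A(\riem_k)$ and $\overline{A(\riem_j)}$ (in local coordinates, each $(f\,dz,g\,dz)$ equals $\iint f\bar g\,dA$, with the analogous formula for antiholomorphic forms), one checks that the left-hand side equals a nonzero multiple of $(T^\epsilon\overline{\alpha},\beta)_{\riem_k^\epsilon}$ and the right-hand side the same multiple of $(\overline{\alpha},\overline{T^\epsilon\overline{\beta}})_{\riem_j^\epsilon}$, where $T^\epsilon$ denotes the operators defined with the shrunken integration domains.

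Finally I would let $\epsilon\searrow 0$. Writing
\[
(T(\riem_j,\riem_k)-T^\epsilon(\riem_j,\riem_k))\overline{\alpha}(z) = \iint_{\riem_j\setminus \riem_j^\epsilon,w} L_R(z,w)\wedge \overline{\alpha(w)}
\]
and invoking the $L^2$-boundedness of the Schiffer operator (Theorem \ref{th:T_boundedness}) bounds this difference in $A(\riem_k)$-norm by $C\|\alpha\|_{L^2(\riem_j\setminus \riem_j^\epsilon)}$, which tends to zero by dominated convergence. A parallel argument applied to the inner-product domain $\riem_k^\epsilon$ yields $(T\overline{\alpha},\beta)_{\riem_k^\epsilon}\to (T\overline{\alpha},\beta)_{\riem_k}$, and similarly for the other side. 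The identity therefore passes to the limit, and density finishes the proof. The main obstacle is that $L_R$ is not absolutely integrable over $\riem_j\times \riem_k$ near the common boundary $\Gamma$, so raw Fubini is unavailable; the two-step maneuver of (i) reducing to forms analytic past $\Gamma$ and (ii) shrinking inward along Green's level curves before exchanging the order of integration is what makes the symmetry of $L_R$ usable.
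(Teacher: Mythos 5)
Your argument for the case $j \neq k$ is correct and takes a genuinely different route from the paper. The paper localizes the difficulty to a doubly-connected chart around $\Gamma$: it splits $\riem_1\times\riem_2$ into pieces, applies Fubini directly on the pieces where $L_R$ is bounded, and handles the one singular collar-times-collar term by pulling back to the plane, extending by zero, and using the Fourier-multiplier representation of the Beurling transform to justify the interchange. You instead truncate both domains along level curves of Green's function, use the symmetry $L_R(z,w)=L_R(w,z)$ of Proposition \ref{pr:bunch_o_identities} together with honest Fubini on disjoint compact sets, and pass to the limit in $L^2$; this trades the Fourier argument for the Calder\'on--Zygmund bound and is arguably more self-contained. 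Two small caveats: the norm bound you need is for the kernel operator applied to $\chi_{\riem_j\setminus\riem_j^\epsilon}\overline{\alpha}$, which is no longer an element of $\overline{A(\riem_j)}$, so the literal statement of Theorem \ref{th:T_boundedness} does not apply; what you are really invoking is Seeley's $L^2(R)$-boundedness of the operator with kernel $L_R$ on arbitrary $L^2$ forms, which is exactly what the proof of Theorem \ref{th:T_boundedness} provides, so this is a presentational point. Also, your initial density reduction to forms holomorphic past $\Gamma$ is never used: the truncation-and-limit scheme works verbatim for arbitrary $\alpha\in A(\riem_j)$, $\beta\in A(\riem_k)$.

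There is, however, a genuine gap: the theorem asserts $T(\riem_j,\riem_k)^\ast=\overline{T}(\riem_k,\riem_j)$ for all $j,k$, including $j=k$, and your proof does not cover that case. When $j=k$ the sets $\overline{\riem_j^\epsilon}$ and $\overline{\riem_k^\epsilon}$ coincide rather than being disjoint, the second-order pole of $L_R$ on the diagonal lies inside the truncated product, and the Fubini step collapses; the operator $T(\riem_j,\riem_j)$ is a true principal-value operator and needs a separate treatment. This case is not decorative --- it is used later, e.g.\ in Theorem \ref{th:two_kernels_adjoint_identity} and in the proof of Theorem \ref{th:Grunsky_souped_up}, where $T(\riem_1,\riem_1)^*=\overline{T(\riem_1,\riem_1)}$ is needed. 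The repair is short and is the one the paper uses: by Theorem \ref{th:Schiffer_vanishing_identity} one may replace the kernel by $L_R-L_{\riem_j}$ as in (\ref{eq:nonsingular_Schiffer}), which removes the diagonal singularity, and then interchange the order of integration; alternatively one could keep your truncation scheme but combine it with a symmetric principal-value excision of the diagonal. As written, the $j=k$ half of the statement is simply missing.
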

 \begin{proof}
 { If $j =k$, the claim follows
 from the non-singular integral representation (\ref{eq:nonsingular_Schiffer}) and
 interchanging the order of integration.}

 The claim essentially follows from the corresponding fact for planar domains, and we need only reduce the problem to this case using coordinates.
 Denote
 \[  \mathcal{L}_\mathbb{C}(z,w) = \frac{1}{\pi} \frac{1}{(z-w)^2}.  \]

We first show that for $G, H\in L^2 (\mathbb{C})$ one has
\begin{equation}\label{adjoint identity in the plane}
\iint_{\mathbb{C}} \big(\iint_{\mathbb{C}} \mathcal{L}_{\mathbb{C}}(z, w) \overline{H(z)} \, dA(z)\big)\,\overline{G(w)}\, dA(w) = \iint_{\mathbb{C}} \big(\iint_{\mathbb{C}} \mathcal{L}_{\mathbb{C}}(z, w) \overline{G(w)} \, dA(w)\big)\,\overline{H(z)}\, dA(z)
\end{equation}
where the inside integral is understood as a principle value integral in both cases.

Now, for $f \in L^2(\mathbb{C})$, the Beurling transform is given by
 \begin{equation}\label{beurling}
B_\mathbb{C} f(z)= \mathrm{PV}\iint_{\mathbb{C}}{\mathcal{L}}_{\mathbb{C}}(z, \zeta)\,f(\zeta)\, dA(\zeta)= \frac{-1}{\pi}\mathrm{PV}\iint_{\mathbb{C}}\frac{f(\zeta)}{(z-\zeta)^2}\, dA(\zeta),
 \end{equation}

With this notation, and denoting $\overline{H}(w) = \overline{H(w)}$,  (\ref{adjoint identity in the plane}) amounts to
\begin{equation}\label{adjoint identity in the plane for T}
\iint_{\mathbb{C}} B_{\mathbb{C}} \overline{H}(w)\,\overline{G}(w)\, dA(w) = \iint_{\mathbb{C}}
B_{\mathbb{C}} \overline{G}(z)\,\overline{H}(z)\, dA(z).
\end{equation}
If one defines the Fourier transform through
 $$\widehat{f}(\xi,\eta)=\iint_{\mathbb{R}^2} e^{-2\pi i(x\xi+y\eta)}\, f(x+iy)\, dx\, dy,$$ then one has that $\widehat{B_\mathbb{C} f}(\xi,\eta)=\frac{\xi-i\eta}{\xi+i\eta}\,\widehat{f}(\xi,\eta).$

 Using Parseval's formula and the above Fourier multiplier representation of the Beurling transform, one has that
$$\iint_{\mathbb{C}} B_\mathbb{C} \overline{H}(w)\,\overline{G}(w)\, dA(w)= \iint_{\mathbb{C}} \widehat{B_\mathbb{C} \overline{H}}(\xi,\eta)\,\widehat{\overline{G}}(\xi,\eta)\,  d\xi\,d\eta = \iint_{\mathbb{C}} \frac{\xi-i\eta}{\xi+i\eta}\, \widehat{\overline{H}}(\xi,\eta)\,\widehat{\overline{G}}(\xi,\eta)\, d\xi\,d\eta,$$
and
$$\iint_{\mathbb{C}} B_\mathbb{C} \overline{G}(z)\,\overline{H}(z)\, dA(z)= \iint_{\mathbb{C}} \widehat{B_\mathbb{C} \overline{G}}(\xi,\eta)\,\widehat{\overline{H}}(\xi,\eta)\, d\xi\,d\eta= \iint_{\mathbb{C}} \frac{\xi-i\eta}{\xi+i\eta}\, \widehat{\overline{G}}(\xi,\eta)\,\widehat{\overline{H}}(\xi,\eta)\, d\xi\,d\eta.$$
This proves (\ref{adjoint identity in the plane for T}) and hence \eqref{adjoint identity in the plane}.

Now let $B$ be a doubly-connected neighbourhood of $\Gamma$ and $\phi:B \rightarrow U \subseteq \mathbb{C}$ be a doubly-connected chart.   Let $E= B \cap \riem_1$ and $E'=B \cap \riem_2$.  Then $\riem_1 = D \cup E$ and $\riem_2 = D' \cup E'$ for some compact sets $D \subset \riem_1$ and $D' \subseteq \riem_2$ whose shared boundaries with $E$ and $E'$ are strip-cutting Jordan curves.  We may choose these as regular as desired (say, analytic Jordan curves, which in particular have measure zero).
Observe that we then have, for any forms $\alpha \in A(\riem_2)$ and $\beta \in A(\riem_1)$
\begin{equation}\label{first crap}
\begin{split}
\iint_{\riem_1} \iint_{\riem_2}L (\zeta,\eta) \wedge_\zeta \overline{\alpha(\zeta)} \wedge_\eta \overline{\beta(\eta)}\\ = \Big(\iint_{D} \iint_{D'} + \iint_{D} \iint_{E'}  + \iint_{E} \iint_{D'} + \iint_E \iint_{E'}\Big) L (\zeta,\eta) \wedge_\zeta \overline{\alpha(\zeta)} \wedge_\eta \overline{\beta(\eta)}.
\end{split}
\end{equation}
and

\begin{equation}\label{second crap}
\begin{split}
\iint_{\riem_2} \iint_{\riem_1}  L (\zeta,\eta) \wedge_\eta \overline{\beta(\eta)} \wedge_\zeta \overline{\alpha(\zeta)}  \\ = \Big(\iint_{D'} \iint_{D} + \iint_{D'} \iint_{E}  + \iint_{E'} \iint_{D} + \iint_{E'} \iint_{E}\Big)L (\zeta,\eta) \wedge_\eta \overline{\beta(\eta)} \wedge_\zeta \overline{\alpha(\zeta)}.
\end{split}
\end{equation}
We only need to show that one can interchange integrals in each term.
The first three integrals in the right hand side of \eqref{first crap} are equal to their interchanged counterparts in the first three terms of \eqref{second crap}. This follows from Fubini's theorem, using the fact that $L(z,\zeta)$ is non-singular and in fact bounded on all of the six domains of integration involved in those integrals. Therefore it is enough to show that
\begin{equation*}
  \iint_E \iint_{E'} L (\zeta,\eta) \wedge_\zeta \overline{\alpha(\zeta)} \wedge_\eta \overline{\beta(\eta)}  = \iint_{E'}\iint_E L (\zeta,\eta)\wedge_\eta \overline{\beta(\eta)} \wedge_\zeta \overline{\alpha(\zeta)} .
\end{equation*}

To show this, let $\phi$ be a local coordinate with $\eta=\phi(w)$ and $\zeta = \phi(z)$.  We pull back the integral to the plane under $\psi=\phi^{-1}$ so that we reduce the problem to showing that
\begin{equation}  \label{eq:temp3}
 \iint_{\phi(E)} \iint_{\phi(E')} (\psi \times \psi)^* L (\zeta,\eta) \wedge_\zeta \psi^* \overline{\alpha(\zeta)} \wedge_\eta \psi^* \overline{\beta(\eta)}  = \iint_{\phi(E')}\iint_{\phi(E)} (\psi \times \psi)^*L (\zeta,\eta)\wedge_\eta \psi^* \overline{\beta(\eta)} \wedge_\zeta\psi^*  \overline{\alpha(\zeta)} .
\end{equation}
Recall that in local coordinates by equation (\ref{eq:L_expressed_locally})
\[  (\psi \times \psi^*) L(\zeta,\eta)= \left( -\frac{1}{2\pi i} \frac{1}{(\zeta-\eta)^2} + H(\eta)
 \right) d\zeta \, d\eta,\]
where $H(\eta)$ is holomorphic near $\zeta$.  For the holomorphic error term, we can just apply Fubini's theorem, so matters reduce to the demonstration of \eqref{eq:temp3} for the principal term of $\mathcal{L}_\mathbb{C}(\zeta,\eta)$ which contains the singularity.  We may write
$\psi^*\alpha(z) = h(z) dz$ and $\psi^*\beta(w) = g(w) dw$
for some $L^2$ holomorphic functions $g$ on $E$ and $h$ on $E'$.  So the problem is reduced to showing that
\[  \iint_{\phi(E')} \iint_{\phi(E)} \mathcal{L}_{\mathbb{C}}(z,w) \wedge_\zeta \overline{h(z)} \wedge_\eta \overline{g(w)} dA(z) dA(w)  := \iint_{\phi(E')} \iint_{\phi(E)}
\mathcal{L}_{\mathbb{C}}(z,w)\overline{h(z)} \overline{g(w)} dA(w) dA(z).  \]
Letting
\begin{equation*}
G(z)=
\begin{cases}
g(z), \,\,\, z\in E\\
0, \,\,\, z\in \mathbb{C}\setminus E
\end{cases}
\end{equation*}
and
\begin{equation*}
H(z)=
\begin{cases}
h(z), \,\,\, z\in E'\\
0, \,\,\, z\in \mathbb{C}\setminus E'
\end{cases}
\end{equation*}
then $G$ and
$H$ are $L^2$ on $\mathbb{C}$ and the claim now follows directly from  \eqref{adjoint identity in the plane}.
\end{proof}




 We also have the following identity.
 \begin{theorem}  \label{th:two_kernels_adjoint_identity} If $\Gamma$ is a quasicircle then
  \[  T(\riem_1,\riem_1)^\ast T(\riem_1,\riem_1) + T(\riem_1,\riem_2)^\ast T(\riem_1,\riem_2) + \overline{S}(\riem_1)^* \overline{S}(\riem_1) = I.   \]
 \end{theorem}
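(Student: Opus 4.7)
Proof Plan:

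Each of the three summands has the form $A^*A$, so the identity is equivalent to the statement that the map
\[
\Psi:\overline{A(\riem_1)} \longrightarrow A(\riem_1)\oplus A(\riem_2)\oplus \overline{A(R)}, \quad \bar\alpha \mapsto \bigl(T(\riem_1,\riem_1)\bar\alpha,\, T(\riem_1,\riem_2)\bar\alpha,\, \overline{S}(\riem_1)\bar\alpha\bigr),
\]
is an isometry. By polarisation, together with $\|\overline{S}(\riem_1)\bar\alpha\|_R = \|S(\riem_1)\alpha\|_R$ and $\|\bar\alpha\|_{\riem_1} = \|\alpha\|_{\riem_1}$, this reduces to the scalar norm identity
\[
\|T(\riem_1,\riem_1)\bar\alpha\|_{\riem_1}^2 + \|T(\riem_1,\riem_2)\bar\alpha\|_{\riem_2}^2 + \|S(\riem_1)\alpha\|_R^2 = \|\alpha\|_{\riem_1}^2,
\]
which must be established for every $\alpha \in A(\riem_1)$.

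The guiding idea is a Pythagorean reformulation. Let $\tilde\alpha \in L^2_{(0,1)}(R)$ denote the extension of $\bar\alpha$ by zero from $\riem_1$ to $R$; since $\Gamma$ has measure zero, $\|\tilde\alpha\|_R^2 = \|\alpha\|_{\riem_1}^2$. Using the symmetry $\overline{K_R(z,w)} = K_R(w,z)$ from Proposition \ref{pr:bunch_o_identities}(5), a direct calculation shows that the orthogonal projection of $\tilde\alpha$ onto $\overline{A(R)}$ coincides with $\overline{S(\riem_1)\alpha}$, so $\|P_{\overline{A(R)}}\tilde\alpha\|_R = \|S(\riem_1)\alpha\|_R$. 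Since $T(\riem_1)\bar\alpha$ on $\riem_1\cup\riem_2$ is, by definition, the principal-value integral of $L_R$ against $\tilde\alpha$ over $R$, the target identity is the Pythagorean statement $\|\tilde\alpha\|_R^2 = \|P_{\overline{A(R)}}\tilde\alpha\|_R^2 + \|T(\riem_1)\bar\alpha\|_R^2$.

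To prove this Pythagorean identity I would proceed by a direct Stokes computation. First, use the density result of Askaripour and Barron \cite{AskBar} together with Theorem \ref{th:T_boundedness} and the boundedness of $S(\riem_1)$ to reduce to those $\alpha \in A(\riem_1)$ which extend holomorphically across $\Gamma$ to a neighbourhood of $\overline{\riem_1}$. For such $\alpha$ one has the pointwise identities
\[
L_R(z,w)\wedge \overline{\alpha(w)} = \tfrac{1}{\pi i}\, d_w\!\bigl[\partial_z g(w,z)\,\overline{\alpha(w)}\bigr], \qquad K_R(z,w)\wedge \alpha(w) = \tfrac{1}{\pi i}\, d_w\!\bigl[\partial_z g(w,z)\,\alpha(w)\bigr],
\]
which follow from the product rule together with the anti-/holomorphicity of $\overline{\alpha}$ and $\alpha$ in the variable $w$. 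Applying Stokes on the exhaustion $\riem_1^\epsilon := \{z\in\riem_1 : g_{\riem_1}(z,p_1)>\epsilon\}$, whose boundary is the analytic curve $\Gamma_\epsilon^{p_1}$, and excising a small ball around $w=z$ when $z\in\riem_1$, then passing to the limit $\epsilon\to 0$, yields the contour-integral representations
\[
T(\riem_1,\riem_k)\bar\alpha(z) = \tfrac{1}{\pi i}\lim_{\epsilon\to 0}\int_{\Gamma_\epsilon^{p_1}} \partial_z g(w,z)\,\overline{\alpha(w)}, \quad S(\riem_1)\alpha(z) - \chi_{\riem_1}(z)\alpha(z) = \tfrac{1}{\pi i}\lim_{\epsilon\to 0}\int_{\Gamma_\epsilon^{p_1}} \partial_z g(w,z)\,\alpha(w),
\]
the $\chi_{\riem_1}\alpha(z)$ term arising from the residue of $\partial_z g(w,z)$ at $w=z$. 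Substituting these into the three squared norms, interchanging integrals by Fubini, and invoking the Cauchy-type identity (Theorem \ref{th:Greens_is_Cauchy_kernel}) and the Green's-function reproducing formula (Theorem \ref{th:Greens_reproducing}) to evaluate the inner integrals should yield the desired equality after cross-term cancellation.

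The main obstacle is the bookkeeping in this final cancellation, which requires careful accounting of the characteristic-function terms generated by the Cauchy-type formula and of the principal-value nature of $T(\riem_1,\riem_1)$. The non-rectifiability of the quasicircle $\Gamma$ rules out direct contour integrals on $\Gamma$ itself, forcing all such integrals to be interpreted as iterated limits along the level curves $\Gamma_\epsilon^{p_1}$; controlling these limits relies on the transmission theorem (Theorem \ref{th:transmission_bounded}) together with the $L^2$-continuity of the Schiffer operators.
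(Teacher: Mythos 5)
Your reductions at the start are sound: since each summand has the form $A^*A$ and your map $\Psi$ is complex-linear, polarization does reduce the operator identity to the norm identity, and your observation that $\overline{S(\riem_1)\alpha}$ is the orthogonal projection of the zero-extension $\tilde\alpha$ onto $\overline{A(R)}$ is correct (it is Theorem \ref{th:Bergman_comparison_restriction} in disguise). The density reduction to $\alpha$ holomorphic across $\Gamma$ is also legitimate, because all three operators are bounded. The problem is that everything after that point --- ``substituting into the three squared norms, interchanging integrals by Fubini, invoking Theorems \ref{th:Greens_is_Cauchy_kernel} and \ref{th:Greens_reproducing}, cross-term cancellation'' --- is not bookkeeping; it is the entire content of the theorem, and the tools you cite do not evaluate the integrals that actually arise. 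Once each factor is written as $\lim_{\epsilon\searrow 0}\int_{\Gamma^{p_1}_\epsilon}\partial_z g(w;z,q)\,\overline{\alpha(w)}$ and Fubini is applied, the inner integrals are of the form $\iint_{\riem_k,z}\partial_z g(w;z,q)\wedge\ast\,\overline{\partial_z g(w';z,q)}$, i.e.\ mutual Dirichlet-type pairings of Green's functions over $\riem_k$ with $w$, $w'$ on two different level curves; neither Theorem \ref{th:Greens_is_Cauchy_kernel} nor Theorem \ref{th:Greens_reproducing} applies to these, and the interchange of the two contour limits with the area integrals across a non-rectifiable quasicircle is exactly the delicate point --- asserting that it ``relies on the transmission theorem and $L^2$-continuity'' is not an argument.

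More importantly, nothing in your outline identifies the mechanism that produces the identity operator (the $\|\alpha\|^2$ term) or kills the cross terms. In the paper this is explicit: using the adjoint formulas (Theorems \ref{th:T_adjoint} and \ref{th:Bergman_comparison_restriction}), the claim is reduced to the kernel identity \eqref{eq:the_needful}, which is proved by fixing $w\in\riem_1$, applying Stokes along the level curves $\Gamma^w_\epsilon$ of $g_{\riem_1}(\cdot,w)$, inserting $L_{\riem_1}$ via the vanishing identity of Theorem \ref{th:Schiffer_vanishing_identity} (which is also what makes the principal-value operator $T(\riem_1,\riem_1)$ well defined through \eqref{eq:nonsingular_Schiffer}), and then --- the crucial step --- converting the $\overline{L_{\riem_1}}$ boundary term into $K_{\riem_1}$ by the level-curve identity \eqref{eq:level_curve_identity}; the reproducing property of $K_{\riem_1}$ then yields $\overline{K_{\riem_1}(z,w)}$, i.e.\ the identity on $\overline{A(\riem_1)}$, while the quasicircle hypothesis enters through the measure-zero property when Stokes is applied on the $\riem_2$ side. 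Your sketch contains no substitute for \eqref{eq:level_curve_identity} or for Theorem \ref{th:Schiffer_vanishing_identity}, so the proposed ``cancellation'' has no identified source; until you carry out an argument at that level (either the paper's kernel computation or an honest evaluation of your contour/area integrals, with the limit interchanges justified), the proof is incomplete.
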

 \begin{proof}
By Theorem \ref{th:T_adjoint}, and interchange of order of integration (which can be justified as in the proof of Theorem \ref{th:T_adjoint}) we have that
  \begin{align*}
    [T(\riem_1,\riem_2)^* T(\riem_1,\riem_2) \alpha](z)
    & = \iint_{\riem_2,\zeta}  \overline{L_R(z,\zeta)} \wedge_\zeta
    \iint_{\riem_1,w} L_R(\zeta,w) \wedge_w \alpha(w) \\
    & = \iint_{\riem_1,w} \left( \iint_{\riem_2,\zeta}   \overline{L_R(z,\zeta)} \wedge_\zeta
     L_R(\zeta,w) \right)  \wedge_w \alpha(\zeta)
  \end{align*}
  so the integral kernel of $T(\riem_1,\riem_2)^* T(\riem_1,\riem_2)$ is
  \[  \iint_{\riem_2,\zeta}   \overline{L_R(z,\zeta)} \wedge_\zeta
     L_R(\zeta,w).  \]
  Similarly, by equation (\ref{eq:nonsingular_Schiffer}) and Theorem \ref{th:T_adjoint},  the integral
  kernel of $T(\riem_1,\riem_1)^* T(\riem_1,\riem_1)$ is
  \[   \iint_{\riem_1,\zeta}  \left( \overline{L_R(z,\zeta)} - \overline{L_{\riem_1}(z,\zeta)}\right) \wedge
   \left(L_R(\zeta,w) - L_{\riem_1}(\zeta,w)  \right).    \]
  Finally,  by Theorem \ref{th:Bergman_comparison_restriction}, the integral kernel of $\overline{S}(\riem_1)^* \overline{S}(\riem_1)$ is $\overline{K_R(z,w)}$.

    Using this and the reproducing property of $K_\riem$ we need only demonstrate the following identity:
  \begin{align}  \label{eq:the_needful}
    \iint_{\riem_1,\zeta}  &\left( \overline{L_R(z,\zeta)} - \overline{L_{\riem_1}(z,\zeta)}\right) \wedge
   \left( L_R(\zeta,w) - L_{\riem_1}(\zeta,w)  \right) \nonumber \\ & +
     \iint_{\riem_2,\zeta} \overline{L_R(z,\zeta)} \wedge
      L_R(\zeta,w) = \overline{K_{\riem_1}(z,w)} - \overline{K_R(z,w)}.
  \end{align}

  Fix $w \in \riem_1$ and orient $\Gamma^w_\epsilon$
  positively with respect to $\riem_1$.  For fixed $w$, $\partial_w g_{\riem_1}(\zeta,w)$ goes to zero uniformly as $\epsilon \rightarrow 0$. We then have that (applying (\ref{th:Schiffer_vanishing_identity})
  \begin{align*}
   \iint_{\riem_1,\zeta}  &\left( \overline{L_R(z,\zeta)} - \overline{L_{\riem_1}(z,\zeta)}\right) \wedge
   \left( L_R(\zeta,w) - L_{\riem_1}(\zeta,w)  \right)   \\ & = \iint_{\riem_1,\zeta}  \left(  \overline{L_R(z,\zeta)} - \overline{L_{\riem_1}(z,\zeta)}\right) \wedge L_R(\zeta,w)  \\
   & = \lim_{\epsilon \rightarrow 0}  -\frac{1}{\pi i} \int_{\Gamma^w_\epsilon} \left(  \overline{L_R(z,\zeta)} - \overline{L_{\riem_1}(z,\zeta)}\right) \partial_w g(\zeta,w) \\
   & =  - \frac{1}{\pi i} \lim_{\epsilon \rightarrow 0} \int_{\Gamma^{w}_\epsilon}
   \overline{L_R(z,\zeta)}\, \partial_w g(\zeta,w)
    + \frac{1}{\pi i} \lim_{\epsilon \rightarrow 0} \int_{\Gamma^{w}_\epsilon} K_{\riem_1}(z,\zeta)
   \, \partial_w g(\zeta,w) \\
  \end{align*}
  where we have applied equation (\ref{eq:level_curve_identity}) in the last step.

  Applying Stokes' theorem to the first term, we see that
  \[ - \frac{1}{\pi i} \lim_{\epsilon \rightarrow 0} \int_{\Gamma^w_\epsilon}
   \overline{L_R(z,\zeta)}\, \partial_w g(\zeta,w) = - \iint_{\riem_2,\zeta} \overline{L_R(z,\zeta)} \wedge L_R(\zeta,w).   \]
   Here we used the fact that quasicircles have measure zero.  Note that $\Gamma^w_\epsilon$ is negatively oriented with respect to $\riem_2$.
 For the second term, we have
  \begin{align*}
    \frac{1}{\pi i} \lim_{\epsilon \rightarrow 0} \int_{\Gamma^w_\epsilon} K_{\riem_1}(z,\zeta)
   \, \partial_w g(\zeta,w) & = \frac{1}{\pi i} \lim_{\epsilon \rightarrow 0} \int_{\Gamma^w_\epsilon}
   K_{\riem_1}(z,\zeta)  \left( \partial_w g(\zeta,w) - \partial_w g_{\riem_1}(\zeta,w) \right) \\
   & = - \iint_{\riem_1,\zeta} K_{\riem_1}(z,\zeta) \wedge \left( \overline{K_R(\zeta,w)} - \overline{K_{\riem_1}(\zeta,w)}
   \right) \\
   & = - \overline{K_R(z,w)} + \overline{K_{\riem_1}(z,w)}
  \end{align*}
  where in the last term we have used part (5) of Proposition \ref{pr:bunch_o_identities}
  and the reproducing property of Bergman kernel on $\riem_1$.
 \end{proof}
 \begin{remark}
 Theorem \ref{th:two_kernels_adjoint_identity} (in various settings)
 appears only as a norm equality in the literature.
 \end{remark}
\end{subsection}
\end{section}
\begin{section}{Jump formula on quasicircles and related isomorphisms}
\label{se:jump}
\begin{subsection}{The limiting integral in the jump formula}

 In this section, we show that the jump formula holds when
 $\Gamma$ is a quasicircle.  We also prove that in this case the Schiffer operator $T(\riem_1,\riem_2)$
 is an isomorphism, when restricted to a certain subclass of $\overline{A(\riem_1)}$.

To establish a jump formula, we would like to define a Cauchy-type integral for elements $h \in \mathcal{H}(\Gamma)$.
  Since $\Gamma$ is not necessarily rectifiable, instead we replace the integral over $\Gamma$ with
  an integral over approximating curves $\Gamma^{p_1}_\epsilon$ (defined at the beginning of Section \ref{Sec:Schiffer's comparison operators}), and use the harmonic
  extensions $\tilde{h} \in \mathcal{D}_{\text{harm}}(\riem_1)$ of elements of $\mathcal{H}(\Gamma)$.

 It is an arbitrary choice whether to
  approximate the curve from within $\riem_1$ or from within $\riem_2$.  Later, we will show that the
  result is the same in the case that $\Gamma$ is a quasicircle. For now, we choose to approximate from within $\riem_1$.  We thus
  define the operator on elements of $\mathcal{D}_{\text{harm}}(\riem_1)$, which in the end
  will be chosen as the unique extension of a fixed element of $\mathcal{H}(\Gamma).$

  Let $h \in \mathcal{D}_{\text{harm}}(\riem_1)$. Fix $q \in R \backslash \Gamma$ and define
  \begin{equation}  \label{eq:jump_definition}
     J_q(\Gamma)h (z) = - \lim_{\epsilon \searrow 0}
        \frac{1}{\pi i}   \int_{\Gamma^{p_1}_\epsilon} \partial_w g(w;z,q) h(w)
  \end{equation}
  for $z \in R \backslash \Gamma$. Observe that, by definition, the curve $\Gamma^{p_1}_\epsilon$ depends on a fixed point $p_1 \in \riem_1$.  However, we shall show that $J_q(\Gamma)$ is independent
  of $p_1$ in a moment.

First we show that the limit exists.  There are several cases depending on the
  locations of $z$ and $q$.  Assume that $q \in \riem_2$, then for $z \in \riem_2$, we have by Stokes' theorem that
  \begin{equation} \label{eq:J_double_integral_out}
    J_q(\Gamma) h(z) = - \frac{1}{\pi i} \iint_{\riem_1} \partial_w g(w;z,q) \wedge
    \overline{\partial} h(w)
  \end{equation}
  so the limit exists and is independent of $p_1$.
  For $z \in \riem_1$ we proceed as follows; let $\gamma_r$ denote the circle of radius $r$ centered at $z$, positively oriented with respect to $z$, in some fixed chart near $z$.
  By applying Stokes' theorem and the mean value property of harmonic functions we obtain
  \begin{align}  \label{eq:J_double_integral_in}
   J_q(\Gamma) h (z) & = - \frac{1}{\pi i} \iint_{\riem_1}  \partial_w g(w;z,q) \wedge \overline{\partial} h(w)
    - \lim_{r \searrow 0} \frac{1}{\pi i} \int_{\gamma_r} \partial_w g(w;z,q) h(w) \nonumber\\
    & = - \frac{1}{\pi i} \iint_{\riem_1}  \partial_w g(w;z,q) \wedge \overline{\partial} h(w)
    + h(z).
  \end{align}
  This shows that the limit exists for $z \in R \backslash \Gamma$ and $q \in \riem_2$
  and is independent of $p$.  In the case that $q \in \riem_1$, we obtain similar expressions, but with the term $h(q)$ added to both integrals.\\

This also shows that
  \begin{lemma} For strip-cutting Jordan curves $\Gamma$, the limit  $(\ref{eq:jump_definition})$ exists and is independent of the
   choice of $p_1$.
  \end{lemma}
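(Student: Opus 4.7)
The plan is to observe that the display equations immediately preceding the lemma already furnish essentially all of the content; the proof is a consolidation by cases of the computation just carried out. I would split on the positions of $z$ and $q$ relative to $\Gamma$, using Stokes' theorem on a suitably truncated subregion of $\riem_1$ to convert the boundary integral over $\Gamma^{p_1}_\epsilon$ into a double integral plus residue-type contributions from the logarithmic singularities of $g(w;z,q)$ at $w=z$ and $w=q$.

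In the case $q\in\riem_2$, equations (\ref{eq:J_double_integral_out}) and (\ref{eq:J_double_integral_in}) already exhibit $J_q(\Gamma)h(z)$ either as $-\frac{1}{\pi i}\iint_{\riem_1}\partial_w g(w;z,q)\wedge\overline{\partial}h(w)$ (when $z\in\riem_2$) or as this same double integral plus $h(z)$ (when $z\in\riem_1$, the $h(z)$ coming from excising a small disk around $z$ and applying the mean value property to the leading singular piece of $\partial_w g$). Both expressions are manifestly independent of $p_1$, since the domain of integration is all of $\riem_1$ and the pointwise term $h(z)$ does not involve the level-curve family. Hence the limit exists and is $p_1$-independent in this case.

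In the case $q\in\riem_1$, I would repeat the Stokes' theorem argument, now excising small geodesic-like disks in a fixed chart around both $z$ (if $z\in\riem_1$) and $q$. The logarithmic singularity of $g(w;z,q)$ in $w$ at $w=q$ has opposite sign to the one at $w=z$, so the residue contribution from the small circle around $q$ contributes a term $-h(q)$, while the contribution around $z$ continues to produce $+h(z)$ when $z\in\riem_1$. In every subcase the final expression is a double integral over $\riem_1$ plus terms of the form $\pm h(z)$ and $\pm h(q)$, each of which is patently independent of $p_1$.

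The only technical point to monitor is convergence of the double integrals: $\partial_w g(w;z,q)$ has a simple pole in $w$ at each of $z$ and $q$, so near such a point the integrand is $O(|w-\cdot|^{-1})$ times the bounded antiholomorphic form $\overline{\partial}h$, which is locally integrable. One interprets the double integrals as the limit as the excision radius tends to zero of the corresponding integrals over $\riem_1$ with a small disk removed, which is precisely how they arise in the Stokes calculation. There is no genuine obstacle here; the lemma records exactly what the displayed computations prove, for every configuration of $z$ and $q$ away from $\Gamma$.
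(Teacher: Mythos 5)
Your proposal is correct and is essentially the paper's own argument: the lemma is proved by the displayed Stokes'-theorem computations (\ref{eq:J_double_integral_out}) and (\ref{eq:J_double_integral_in}), which express $J_q(\Gamma)h$ as a double integral over all of $\riem_1$ plus pointwise terms $\pm h(z)$, $\pm h(q)$ according to the positions of $z$ and $q$, so the limit exists and cannot depend on $p_1$. Your extra remarks on local integrability near the poles (using that $\overline{\partial}h$ is locally bounded in the interior) and your sign $-h(q)$ for the contribution at $q$ are consistent with Theorem \ref{th:Greens_is_Cauchy_kernel} and do not change the route.
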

  Therefore, in the following we will usually omit mention of the point $p_1$ in defining the level curves, and write simply $\Gamma_\epsilon$.

  \begin{theorem}  \label{th:jump_derivatives}
   Let $\Gamma$ be a strip-cutting Jordan curve in
   $R$.  For all $h \in \mathcal{D}_{\mathrm{harm}}(\riem_1)$ and any $q \in R \backslash \Gamma$,
   \begin{align*}
    \partial J_q(\Gamma) h (z) & = T(\riem_1,\riem_2) \overline{\partial} h(z),  \    & z \in \riem_2\\
    \partial J_q(\Gamma) h(z) & = \partial h(z) + T(\riem_1,\riem_1) \overline{\partial} h(z), \
    & z \in \riem_1 \\
    \overline{\partial} J_q(\Gamma) h(z)& = \overline{S}(\riem_1) \overline{\partial} h(z), \
    & z \in \riem_1 \cup \riem_2. \\
   \end{align*}
  \end{theorem}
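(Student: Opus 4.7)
The three identities follow by differentiating the area-integral representations of $J_q(\Gamma)h$ obtained in $(\ref{eq:J_double_integral_out})$ and $(\ref{eq:J_double_integral_in})$ with respect to $z$. For $z \in \riem_2$, the integrand of $-\frac{1}{\pi i}\iint_{\riem_1}\partial_w g(w;z,q) \wedge \overline{\partial} h(w)$ is jointly smooth in $(z,w)$ since $z$ and $w$ lie in disjoint components, so differentiation passes under the integral sign. The identity $-\frac{1}{\pi i}\partial_z\partial_w g = L_R(z,w)$, by the very definition of the Schiffer kernel, immediately yields $\partial J_q(\Gamma)h(z) = T(\riem_1,\riem_2)\overline{\partial} h(z)$. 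For $\overline{\partial}_z$, the reality of $g$ combined with the definition $K_R = -\frac{1}{\pi i}\partial_z \overline{\partial}_w g$ and the symmetry in Proposition \ref{pr:bunch_o_identities} identifies $-\frac{1}{\pi i}\overline{\partial}_z \partial_w g$ with $\overline{K_R(z,w)}$; comparing with the integral defining $\overline{S}(\riem_1)$ applied to $\overline{\partial} h \in \overline{A(\riem_1)}$ produces the third identity on $\riem_2$.

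For $z \in \riem_1$, the extra $h(z)$ term in $(\ref{eq:J_double_integral_in})$ contributes $\partial h(z)$ and $\overline{\partial} h(z)$ under the respective derivatives, and the singular area integral must be treated with care. The key step is to split
\[
\partial_w g(w;z,q) = \partial_w g_{\riem_1}(w,z) + \partial_w \tilde g(w;z,q),
\]
where $\tilde g = g - g_{\riem_1}$ is smooth in $(w,z)$ throughout $\riem_1$ because the logarithmic singularities of the two Green's functions at $w=z$ cancel. A Stokes and mean-value argument parallel to the derivation of $(\ref{eq:J_double_integral_in})$ itself, applied now to $g_{\riem_1}$ and using the reproducing formula of Theorem \ref{th:Greens_reproducing}, shows that
\[
\iint_{\riem_1}\partial_w g_{\riem_1}(w,z) \wedge \overline{\partial} h(w) = 0,
\]
so the area integral reduces to its $\tilde g$ piece, which is smooth and may be differentiated termwise. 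The $\partial_z$-derivative produces the kernel $L_R - L_{\riem_1}$, which by the non-singular representation $(\ref{eq:nonsingular_Schiffer})$ equals $T(\riem_1,\riem_1)\overline{\partial} h(z)$, yielding the second identity. The $\overline{\partial}_z$-derivative produces the kernel $\overline{K_R} - \overline{K_{\riem_1}}$; the conjugate form of the Bergman reproducing property $(\ref{eq:Bergman_reproducing})$ for $K_{\riem_1}$ gives $\iint_{\riem_1}\overline{K_{\riem_1}(z,w)} \wedge \overline{\partial} h(w) = \overline{\partial} h(z)$, and this exactly cancels the $\overline{\partial} h(z)$ arising from the $h(z)$ term, leaving $\overline{S}(\riem_1)\overline{\partial} h(z)$.

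The main technical hurdle is justifying differentiation of the area integral through the $1/(w-z)$ singularity of $\partial_w g$ at $w=z$ when $z \in \riem_1$. The decomposition $g = g_{\riem_1} + \tilde g$ isolates the singularity into the $g_{\riem_1}$ piece, whose contribution is shown to vanish by the Green's reproducing identity (equivalently, Theorem \ref{th:Schiffer_vanishing_identity} at the level of $z$-derivatives), leaving only the smooth $\tilde g$ piece for termwise differentiation. Once this is established, the identifications with $L_R - L_{\riem_1}$ and with $\overline{K_R} - \overline{K_{\riem_1}}$ are immediate from the definitions, and the Bergman reproducing formula absorbs the residual $\overline{\partial} h(z)$ in the $\overline{\partial}$-identity on $\riem_1$. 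In the case $q \in \riem_1$ an additional constant term $h(q)$ appears in $J_q(\Gamma)h$, but it vanishes under $\partial_z$ and $\overline{\partial}_z$, so the identities hold uniformly in the location of $q \in R \setminus \Gamma$.
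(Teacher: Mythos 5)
Your proposal is correct and takes essentially the same approach as the paper: the singularity at $w=z$ is removed by subtracting $g_{\riem_1}$, Theorem \ref{th:Greens_reproducing} accounts for the $h(z)$ term, and differentiation under the now non-singular integral identifies the kernels with $L_R-L_{\riem_1}$ and the conjugate Bergman kernels, the reproducing property of $K_{\riem_1}$ cancelling the residual $\overline{\partial}h(z)$ exactly as in the paper. The only difference is organizational: you split the area integral in (\ref{eq:J_double_integral_in}) and prove the vanishing of the $\partial_w g_{\riem_1}$ piece (a legitimate integrated form of Theorem \ref{th:Schiffer_vanishing_identity}, provable by the Stokes/mean-value computation you cite), whereas the paper performs the same subtraction inside the limiting contour integral before applying Stokes' theorem.
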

  \begin{proof}  Assume first that $q \in \riem_2$. The first claim follows from (\ref{eq:J_double_integral_out}) and the fact that
    the integrand is non-singular.  Similarly for $z \in \riem_2$, the third claim follows
    from (\ref{eq:J_double_integral_out}).

    The second claim follows from Stokes theorem:
    \begin{align} \label{eq:add_q_temp}
     \partial J_q(\Gamma) h(z) & = \partial_z \left( - \frac{1}{\pi i} \lim_{\epsilon \searrow 0}
     \int_{\Gamma_\epsilon} (\partial_w g(w;z,q) - \partial_w g_{\riem}(w,z) )\,h(w) \right) \nonumber \\
     & \ \ \ \   -   \partial_z \lim_{\epsilon \searrow 0} \frac{1}{\pi i} \int_{\Gamma_\epsilon}
     \partial_w g_{\riem} (w,z) \,h(w) \nonumber\\
     & = \partial_z \left( - \frac{1}{\pi i}
     \iint_{\riem_1} (\partial_w g(w;z,q) - \partial_w g_{\riem}(w,z) )\wedge_w \overline{\partial} h(w) \right) \nonumber \\
     & \ \ \ \   -   \partial_z \lim_{\epsilon \searrow 0} \frac{1}{\pi i} \int_{\Gamma_\epsilon}
     \partial_w g_{\riem} (w,z) \,h(w) \nonumber\\
     & = - \frac{1}{\pi i}
     \iint_{\riem_1} (\partial_z \partial_w g(w;z,q) - \partial_z \partial_w g_{\riem}(w,z) )
      \wedge_w \overline{\partial} h(w)  + \partial h(z)
    \end{align}
    where we have used Theorem \ref{th:Greens_reproducing}. Also observe that the fact that the integrand
    of the first term is non-singular and holomorphic in $z$ for each $w\in \riem_1$, and that $$\iint_{\riem_1, w}|( \partial_w g(w;z,q) - \partial_w g_{\riem}(w,z) ) \wedge_w \overline{\partial}_{{w}} h(w)|$$ is locally bounded in $z$, yield that derivation under the integral sign in the first term is legitimate.

    Similarly removing the singularity using $\partial_w g_{\riem}$, and then applying
    Theorem \ref{th:Greens_reproducing} and Stokes' theorem yield that
    \begin{align*}
      \overline{\partial} J(\Gamma) h(z) & = - \overline{\partial}_{{z}} \frac{1}{\pi i} \lim_{\epsilon \searrow 0}
     \int_{\Gamma_\epsilon} ( \partial_w g(w;z,q) -  \partial_w g_{\riem}(w,z) )\,h(w)  + \overline{\partial} h(z) \\
     & = - \frac{1}{\pi i} \iint_{\riem_1}(\overline{\partial}_{{z}} \partial_w g(w;z,q) - \overline{\partial}_{{z}} \partial_w g_{\riem}(w,z) ) \wedge_w \overline{\partial}_{{w}} h(w)  + \overline{\partial} h(z).
    \end{align*}
    The third claim now follows by observing that the second term in the integral is just $- \overline{\partial} h$ because the integrand is just the complex conjugate of the Bergman kernel.

    Now assume that $q \in \riem_1$.  We show the second claim in the theorem.  We argue as in equation (\ref{eq:add_q_temp}), except that we must also add
    a term $\partial_w g_{\riem_1}(w;q) h(w)$.  We obtain instead
    \[ \partial J(\Gamma) h =  \partial_z J(\Gamma) h = - \frac{1}{\pi i}
     \iint_{\riem_1} (\partial_z \partial_w g(w;z,q) - \partial_z \partial_w g_{\riem}(w;z) )
      \wedge_w \overline{\partial}_{{w}} h(w)  + \partial_z \left( h(z) + h(q) \right)  \]
     and the claim follows from $\partial_z h(q) =0$.  The remaining claims follow similarly.
  \end{proof}

  Below, let $\overline{A(R)}^\perp$ denote the orthogonal complement in $\overline{A_{\text{harm}}(\riem_1)}$of the restrictions of $\overline{A(R)}$ to $\riem_1$
  \begin{corollary}   \label{co:boundedness_and_holomorphicity}
    Let $\Gamma$ be a strip-cutting Jordan curve and assume that $q \in R \backslash \Gamma$.
   \begin{enumerate}
    \item[$(1)$] $J_q(\Gamma)$ is a bounded operator from $\mathcal{D}_{\mathrm{harm}}(\riem_1)$ to $\mathcal{D}_{\mathrm{harm}}(\riem_1 \cup \riem_2)$.
    \item[$(2)$] If $\overline{\partial} h \in \overline{A(R)}^\perp$ then $J_q(\Gamma) h \in \mathcal{D}(\riem_1 \cup \riem_2)$.
   \end{enumerate}
  \end{corollary}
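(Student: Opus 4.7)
The plan is to read off both statements directly from the derivative identities in Theorem \ref{th:jump_derivatives}, combined with the already established boundedness of the Schiffer and restriction operators.

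For (1), I would decompose the Dirichlet seminorm as $\|d J_q(\Gamma) h\|_{A_{\mathrm{harm}}(\riem_1 \cup \riem_2)}^2 = \|\partial J_q(\Gamma) h\|^2 + \|\overline{\partial} J_q(\Gamma) h\|^2$ (using the splitting in equation (\ref{eq:inner_product_with_dbar_and_d})) and then estimate the two summands on $\riem_1$ and $\riem_2$ separately. On $\riem_2$, Theorem \ref{th:jump_derivatives} gives $\partial J_q(\Gamma) h = T(\riem_1,\riem_2) \overline{\partial} h$, so boundedness of $T(\riem_1,\riem_2)$ (Theorem \ref{th:T_boundedness}) controls this term by a multiple of $\|\overline{\partial} h\|_{\overline{A(\riem_1)}}$. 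On $\riem_1$, the identity $\partial J_q(\Gamma) h = \partial h + T(\riem_1,\riem_1) \overline{\partial} h$ together with boundedness of $T(\riem_1,\riem_1)$ yields the analogous bound. Finally, $\overline{\partial} J_q(\Gamma) h = \overline{S}(\riem_1) \overline{\partial} h$ on all of $\riem_1 \cup \riem_2$, and since $\overline{S}(\riem_1)$ is bounded (e.g.\ as the complex conjugate of the adjoint of the manifestly bounded $\mathrm{Res}(\riem_1)$, using Theorem \ref{th:Bergman_comparison_restriction}), this term is also controlled. As a byproduct, each of these expressions lies in the appropriate Bergman space, so $\partial J_q(\Gamma) h$ is holomorphic on each component and $\overline{\partial} J_q(\Gamma) h$ is antiholomorphic, confirming that $J_q(\Gamma) h$ is genuinely harmonic on $\riem_1 \cup \riem_2$ and hence belongs to $\mathcal{D}_{\mathrm{harm}}(\riem_1 \cup \riem_2)$.

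For (2), by the third identity of Theorem \ref{th:jump_derivatives} the holomorphicity of $J_q(\Gamma) h$ on $\riem_1 \cup \riem_2$ reduces to the assertion $\overline{S}(\riem_1) \overline{\partial} h = 0$ under the hypothesis $\overline{\partial} h \in \overline{A(R)}^{\perp}$. Writing $\overline{\partial} h = \overline{\alpha}$ with $\alpha \in A(\riem_1)$, the definition of $\overline{S}$ reduces the claim to $S(\riem_1)\alpha = 0$. By Theorem \ref{th:Bergman_comparison_restriction}, $S(\riem_1) = \mathrm{Res}(\riem_1)^{\ast}$, so $S(\riem_1)\alpha = 0$ is equivalent to $\alpha \perp \mathrm{Res}(\riem_1) A(R)$ inside $A(\riem_1)$. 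Complex conjugation is an anti-linear isometry $A(\riem_1) \to \overline{A(\riem_1)}$ sending $\mathrm{Res}(\riem_1) A(R)$ to $\mathrm{Res}(\riem_1) \overline{A(R)}$, so this orthogonality is equivalent to $\overline{\alpha} = \overline{\partial} h \perp \mathrm{Res}(\riem_1) \overline{A(R)}$ in $\overline{A(\riem_1)}$, which is exactly the hypothesis.

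There is no real obstacle here; both parts are bookkeeping on top of Theorem \ref{th:jump_derivatives}. The only thing to track carefully is the translation of the adjoint identity $S(\riem_1) = \mathrm{Res}(\riem_1)^{\ast}$ into a statement about $\ker \overline{S}(\riem_1)$, which amounts to using the conjugate-linear isomorphism between $A(\riem_1)$ and $\overline{A(\riem_1)}$ consistently.
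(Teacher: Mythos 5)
Your proposal is correct and follows essentially the same route as the paper: both parts are read off from Theorem \ref{th:jump_derivatives} together with the boundedness of the Schiffer operators (Theorem \ref{th:T_boundedness}) and of $\overline{S}(\riem_1)$. The only cosmetic difference is in (2), where you pass through the adjoint identity $S(\riem_1)=\mathrm{Res}(\riem_1)^{\ast}$ to conclude $\overline{S}(\riem_1)\overline{\partial}h=0$, whereas the paper simply observes that for fixed $z$ the kernel $\overline{\partial}_{z}\partial_w g$ lies in $A(R)$ as a form in $w$, so the orthogonality hypothesis kills the integral directly; the two formulations are equivalent.
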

  \begin{proof} The first claim follows immediately from Theorems \ref{th:T_boundedness} and \ref{th:jump_derivatives}.
   The second claim follows from Theorem \ref{th:jump_derivatives} together with the fact that for fixed $z$
   $\overline{\partial}_{{z}} \partial_w g \in {A(R)}$.
  \end{proof}
 \end{subsection}
 \begin{subsection}{Density theorems}
  In this section we show that certain subsets of the Dirichlet space are dense.

  Our first density result follows from a theorem of N. Askaripour and T. Barron \cite{AskBar}.  In brief, their result says that $L^2$ holomorphic one-forms (in fact, differentials) on a region in a Riemann surface can be approximated by holomorphic one-forms on a larger domain. More specifically they proved the following:
  \begin{proposition}\label{prop:AskBar}
   Let $\mathcal{B}_1$, $\mathcal{B}_2$ be nonempty open subsets of a Riemann surface $\Sigma$ such that $\mathcal{B}_1 \subset \mathcal{B}_2$. For a positive integer $k$ and $j = 1, 2$
denote by $\mathcal{A}^{(k)}_j$
the Hilbert space of holomorphic square-integrable $k$-differentials
on $\mathcal{B}_j $. If now $\iota^* : \mathcal{A}^{(k)}_2\to \mathcal{A}^{(k)}_1$, is defined by $\iota^* s= s|_{\mathcal{B}_1},$ then $\iota^*(\mathcal{A}^{(k)}_2)$ is dense in $\mathcal{A}^{(k)}_1.$
  \end{proposition}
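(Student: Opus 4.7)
The plan is a standard Hahn--Banach / duality argument. Since $\mathcal{A}^{(k)}_1$ is a Hilbert space, $\iota^*(\mathcal{A}^{(k)}_2)$ is dense in $\mathcal{A}^{(k)}_1$ if and only if its orthogonal complement is trivial. So I would fix $\alpha \in \mathcal{A}^{(k)}_1$ with $(\alpha, \iota^*\beta)_{\mathcal{B}_1}=0$ for every $\beta \in \mathcal{A}^{(k)}_2$, and aim to conclude $\alpha \equiv 0$.

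The first step is to extend $\alpha$ by zero across $\partial \mathcal{B}_1$ to a measurable section $\tilde{\alpha}$ of the appropriate line bundle over $\mathcal{B}_2$. With respect to any fixed smooth Hermitian metric, extension by zero preserves the $L^2$-norm, so $\tilde{\alpha}$ lies in $L^2$ on $\mathcal{B}_2$ and the orthogonality hypothesis becomes $\tilde{\alpha} \perp \mathcal{A}^{(k)}_2$ in that $L^2$-space.

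The core step is to use this orthogonality to force $\alpha$ to vanish pointwise on $\mathcal{B}_1$. For each $w \in \mathcal{B}_1$, I would construct a family of holomorphic $k$-differentials on $\mathcal{B}_2$ concentrating near $w$: starting with a smooth cutoff $\chi$ supported in a small coordinate disk $D_w \subset \mathcal{B}_1$ about $w$, form the compactly supported $k$-differential $\eta_\chi = \chi\,(dz)^k$ on $\mathcal{B}_2$, and correct it by subtracting a solution $v$ of $\overline{\partial} v = \overline{\partial} \eta_\chi$ with $L^2$-bounds on $\mathcal{B}_2$. Such a $v$ is provided by standard $L^2$-$\overline{\partial}$ theory on open Riemann surfaces (H\"ormander / Andreotti--Vesentini), possibly after exhausting $\mathcal{B}_2$ by relatively compact subdomains to obtain uniform estimates. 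The corrected element $\eta_\chi - v$ then lies in $\mathcal{A}^{(k)}_2$ and has its mass concentrated near $w$. Pairing it against $\tilde{\alpha}$ extracts local information about $\alpha$ at $w$, and since this pairing vanishes for every admissible $\chi$, shrinking the support of $\chi$ would force $\alpha(w)=0$.

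The main obstacle is quantitative: in order to pass to the limit as $\chi$ concentrates at $w$ one needs $L^2$-bounds on the corrector $v$ that do not degenerate, so that the limiting pairing genuinely reproduces $\alpha(w)$ (modulo a Bergman-type normalization). Notice that no topological hypothesis on the pair $(\mathcal{B}_1,\mathcal{B}_2)$ is needed because the conclusion is $L^2$-density and not uniform approximation on compacta. An alternative that sidesteps the quantitative step would be to characterize $(\mathcal{A}^{(k)}_2)^\perp$ in $L^2$ via the formal adjoint $\overline{\partial}^*$ and then apply unique continuation for holomorphic sections, exploiting the fact that $\tilde{\alpha}$ is holomorphic on the open set $\mathcal{B}_1$ and vanishes identically on the open set $\mathcal{B}_2 \setminus \overline{\mathcal{B}_1}$.
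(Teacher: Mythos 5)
Your proposal cannot be checked against an internal argument, because the paper does not prove Proposition \ref{prop:AskBar} at all: it is quoted from Askaripour--Barron \cite{AskBar} and used as a black box. Judged on its own, your plan has a genuine gap at its core step. After extending $\alpha$ by zero to $\tilde{\alpha}$ on $\mathcal{B}_2$, the hypothesis $\tilde{\alpha}\perp\mathcal{A}^{(k)}_2$ yields no pointwise information about $\alpha$. A family in $\mathcal{A}^{(k)}_2$ built from a cut-off bump plus a $\overline{\partial}$-corrector cannot concentrate its $L^2$ mass at an interior point $w$ (for holomorphic differentials, sup-norms on a fixed compact neighbourhood of $w$ are dominated by the global $L^2$ norm), and the only meaningful normalized limit of such a family is a multiple of the Bergman kernel $K_{\mathcal{B}_2}(\cdot,w)$. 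Pairing $\tilde{\alpha}$ against it returns the value at $w$ of the Bergman projection of $\tilde{\alpha}$ onto $\mathcal{A}^{(k)}_2$, which is zero by hypothesis: a tautology, not the desired conclusion $\alpha(w)=0$, because $\tilde{\alpha}$ is not holomorphic on $\mathcal{B}_2$ (it jumps across $\partial\mathcal{B}_1$), so the reproducing property never sees $\alpha(w)$. The ``unique continuation'' alternative fails for the same reason: $\tilde{\alpha}$ is not a holomorphic section on $\mathcal{B}_2$, and membership in the orthogonal complement of a Bergman space is not an elliptic equation. (A secondary point: for $k\ge 2$ the $L^2$ structures on $\mathcal{A}^{(k)}_1$ and $\mathcal{A}^{(k)}_2$ require a choice of metric, and if intrinsic metrics of the two domains are used, extension by zero is not an isometry, so even the transfer of orthogonality needs justification; for $k=1$, the only case used in this paper, that issue is moot.)

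Moreover, the gap is not repairable in the generality in which you work, and your remark that ``no topological hypothesis on the pair is needed because the conclusion is $L^2$-density'' is precisely the false step. Take $\Sigma=\mathbb{C}$, $\mathcal{B}_2=\mathbb{D}$, $\mathcal{B}_1=\{1/2<|z|<1\}$, $k=1$. Restrictions of elements of $\mathcal{A}^{(1)}_2$ lie in the $L^2(\mathcal{B}_1)$-closure of the span of $z^n\,dz$, $n\ge 0$ (Taylor partial sums converge in the Bergman norm of the disk), while $z^{-1}\,dz$ is a nonzero element of $\mathcal{A}^{(1)}_1$ orthogonal to every such monomial on the annulus; hence $\iota^*(\mathcal{A}^{(1)}_2)$ is not dense, and indeed your candidate $\tilde{\alpha}$ (the zero-extension of $z^{-1}dz$) is a nonzero form orthogonal to all of $\mathcal{A}^{(1)}_2$. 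So any correct proof must invoke hypotheses excluding compact components of $\mathcal{B}_2\setminus\mathcal{B}_1$ (a Runge/homology-type condition, satisfied in the collar-type configurations in which Theorem \ref{th:slightly_bigger_density} actually applies the proposition), and those hypotheses would have to enter your $\overline{\partial}$-with-weights construction in an essential way; your argument never uses them, so it proves too much. If the goal is a self-contained substitute for the citation, prove the density directly in the collar situations needed here (e.g.\ by conformal mapping to an annulus and Laurent expansion), or reconstruct the Askaripour--Barron argument together with its precise hypotheses.
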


  We need a result of this form for the Dirichlet space.  Equivalently, one must show that exact $L^2$ one-forms can be approximated by exact $L^2$ one-forms on the larger region.

  \begin{theorem} \label{th:slightly_bigger_density}
   $(1)$ Let $R$ be a Riemann surface and $\riem_1$ be bounded by a strip-cutting Jordan curve in $R$ which
   separates $R$.  Let $\riem_1'$ contain $\riem_1$ and be such $\riem_1' \backslash \mathrm{cl}\, \riem_1$ is a collar neighbourhod of $\Gamma$.   Let $\mathrm{Res}:A(\riem_1') \rightarrow A(\riem_1)$ denote restriction.  Then $\mathrm{Res}\, A_e(\riem_1')$ is dense in $A_e(\riem_1)$.\\

   $(2)$ Let $R$ be a compact Riemann surface and $\Gamma$ be a strip-cutting Jordan curve.  Let $U$ be a
    doubly-connected neighbourhood of $\Gamma$.
    Let $A_i = U \cap \riem_i$ for $i=1,2$, and let $\mathrm{Res}_i:\mathcal{D}(U) \rightarrow \mathcal{D}(A_i)$ denote restriction for $i=1,2$.  Then $\mathrm{Res}_i \mathcal{D}(U)$ is dense in $\mathcal{D}(A_i)$ for
   $i=1,2$.
  \end{theorem}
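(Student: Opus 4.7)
The plan for both parts follows the same two-step strategy. First, apply the Askaripour--Barron approximation of Proposition~\ref{prop:AskBar} to approximate any element of $A(\riem_1)$ (resp.\ $A(A_i)$) by restrictions of elements of $A(\riem_1')$ (resp.\ $A(U)$). Such approximants are generally not exact, even when the form being approximated is; the second step is to subtract a judiciously chosen holomorphic one-form on the larger region whose periods cancel the periods of the approximant. The key analytic input allowing this correction is that the period of a holomorphic one-form around a cycle lying in a fixed compact set is a continuous functional with respect to the $A$-norm, which is immediate from Lemma~\ref{th:uniform_controlled_by_L^2}.

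For part (1), let $g_1$ denote the genus of $\riem_1$. Because $\riem_1'\setminus\mathrm{cl}\,\riem_1$ is a collar, the inclusion $\riem_1\hookrightarrow \riem_1'$ is a homotopy equivalence, so any basis $c_1,\ldots,c_{2g_1}$ of $H_1(\riem_1;\mathbb{Z})$ chosen to lie in a compact subset $K\subset\riem_1$ also represents a basis of $H_1(\riem_1';\mathbb{Z})$. Select $\omega_1,\ldots,\omega_{2g_1}\in A(\riem_1')$ whose period matrix $P=\bigl(\int_{c_j}\omega_i\bigr)_{i,j}$ is invertible; such forms exist by the standard theory of abelian differentials on bordered Riemann surfaces (for instance, via restriction from the Schottky double of $\riem_1'$). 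Given $\alpha\in A_e(\riem_1)$, Proposition~\ref{prop:AskBar} yields $\gamma_n\in A(\riem_1')$ with $\gamma_n|_{\riem_1}\to\alpha$ in $A(\riem_1)$, and Lemma~\ref{th:uniform_controlled_by_L^2} upgrades this to uniform convergence on $K$, so $\int_{c_j}\gamma_n\to\int_{c_j}\alpha=0$ for each $j$. Using invertibility of $P$, solve $P\lambda^n=\bigl(\int_{c_j}\gamma_n\bigr)_j$ to obtain coefficients $\lambda^n_i\to 0$; then $\tilde\gamma_n:=\gamma_n-\sum_i\lambda^n_i\omega_i\in A_e(\riem_1')$ and $\tilde\gamma_n|_{\riem_1}\to\alpha$ in $A(\riem_1)$, as required.

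Part (2) is the annular specialization of the same scheme. Since $U$ is doubly-connected and $A_i$ is a collar neighbourhood of $\Gamma$ inside $U$, both $H_1(U)$ and $H_1(A_i)$ are generated by a single cycle $c$, which may be chosen to lie inside $A_i$. Fix $\omega\in A(U)$ with $\int_c\omega\neq 0$; the existence of such $\omega$ is immediate by pulling back $dz/z$ under a conformal identification of $U$ with a standard annulus. Given $f\in\mathcal{D}(A_i)$, Proposition~\ref{prop:AskBar} supplies $\beta_n\in A(U)$ with $\beta_n|_{A_i}\to df$ in $A(A_i)$, whence $\int_c\beta_n\to 0$, so
\[
\tilde\beta_n := \beta_n-\frac{\int_c\beta_n}{\int_c\omega}\,\omega
\]
lies in $A_e(U)$ and still satisfies $\tilde\beta_n|_{A_i}\to df$ in $A(A_i)$. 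Choosing $g_n\in\mathcal{D}(U)$ with $dg_n=\tilde\beta_n$ then yields $\|f-g_n|_{A_i}\|_{\mathcal{D}(A_i)}\to 0$, proving density with respect to the Dirichlet semi-norm. The main obstacle throughout is the selection of the correcting forms $\omega_i$ and $\omega$: part (2) is transparent from the annular model, while part (1) relies on the standard but non-trivial fact that the period map on $A(\riem_1')$ surjects onto $\mathbb{C}^{2g_1}$ when evaluated on a homology basis drawn from the interior.
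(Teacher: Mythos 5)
Your proposal is correct and shares the paper's overall two-step strategy: approximate by Proposition \ref{prop:AskBar}, use Lemma \ref{th:uniform_controlled_by_L^2} to see that the periods of the approximants tend to the (vanishing) periods of the target, and then correct the approximant to make it exact on the larger region, noting (as the paper does implicitly and you do explicitly) that the collar hypothesis makes the inclusion a homotopy equivalence, so killing the periods along cycles inside $\riem_1$ forces exactness on $\riem_1'$. Where you genuinely diverge is the correction mechanism. The paper projects orthogonally onto $\mathrm{Ker}(Q\circ\mathrm{Res})$ and bounds $\|\mathcal{P}\alpha-\alpha\|_{A(\riem_1')}\leq C\|Q(\mathrm{Res}\,\alpha)\|_{\mathbb{C}^m}$ via the Riesz representation theorem and Gram--Schmidt; this needs no information whatsoever about the image of the period map. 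You instead subtract an explicit combination $\sum_i\lambda^n_i\omega_i$, which requires the additional fact that the period map on the Bergman space $A(\riem_1')$ is surjective, i.e.\ that there exist $L^2$ holomorphic one-forms on $\riem_1'$ with invertible period matrix. That fact is true and classical (e.g.\ symmetric differentials on the double, or differentials of the second kind with poles off $\mathrm{cl}\,\riem_1'$, transported by conformal invariance of the $L^2$ norm of one-forms since the outer boundary of $\riem_1'$ is only a Jordan curve), but it is a nontrivial extra input that you invoke rather than prove, and in the paper's setting one must also be slightly careful that ``restriction from the Schottky double of $\riem_1'$'' makes sense for a non-smooth outer boundary. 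So: your argument buys an explicit, quantitative correction with coefficients $\lambda^n\to 0$, and in part (2) it is completely transparent (the single form $dz/z$ pulled back from the annulus model); the paper's projection argument buys independence from any period-surjectivity statement, which is why it goes through with no citation beyond the Riesz theorem. If you keep your version of part (1), you should either supply a proof or a precise reference for the invertible period matrix in the $L^2$ setting, or replace that step by the kernel-projection estimate.
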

  \begin{proof}  Assume that $R$ is a hyperbolic Riemann surface.  Applying Proposition \ref{prop:AskBar} with $k=1$, $\mathcal{B}_1 = \riem_1$, $\mathcal{B}_2 = \riem_1'$, and $\riem=R$,
   we see that $\mathrm{Res}\, A(\riem_1')$ is dense in $A(\riem_1)$.  If on the other hand $R$ is not hyperbolic, let $\riem$ be obtained by removing a conformal disk from $R$ whose closure is disjoint from
   the closure of $\riem_1'$.  Let $k$, $\mathcal{B}_1$ and $\mathcal{B}_2$ be as before.  The conditions
   of Proposition \ref{prop:AskBar} are then easily verified, and we have that $\mathrm{Res}\, A(\riem_1')$ is dense in $A(\riem_1)$.

We need to show that the claim holds for exact forms.  Choose a set of closed contours
   $\gamma_1,\ldots,\gamma_m$ in $\riem_1$ generating the fundamental group of $\riem_1$.
   For $\alpha \in A(\riem_1)$ let
   \[ Q(\alpha) = \left(  \int_{\gamma_1} \alpha,\ldots, \int_{\gamma_m} \alpha  \right).  \]
  Each component of  $Q \circ \text{Res}$ is a bounded linear functional.  To see this, for
  fixed $i$ let $\phi:B \rightarrow U$ be a local holomorphic chart in a neighbourhood $B$ of $\gamma_i$.
   Now ${\Gamma_i} = \phi \circ \gamma_i$ is a compact subset of $U$, and has finite length $l$ say.  Let
   $M_{\Gamma_i}$ be obtained by applying
  Lemma \ref{th:uniform_controlled_by_L^2} to $A(U)$ and ${\Gamma_i}$.  We then have that
   \begin{align*}
      \left| \int_{\gamma_i} \alpha \right|&  = \left| \int_{\phi \circ \gamma_i} (\phi^{-1})^* \alpha
   \right|  \leq M_{{\Gamma_i}} l \| (\phi^{-1})^* \alpha \|_{A(U)} =  M_{\Gamma_i} l \|  \alpha \|_{A(B)}  \\
  & \leq M_{\Gamma_i} l   \| \alpha \|_{A(\riem_1')}.
   \end{align*}
   Let $\mathcal{P}:A(\riem_1') \rightarrow \text{Ker}\, Q$ denote the
   orthogonal projection onto
   the kernel of $Q \circ \text{Res}$.
   By using the Riesz representation theorem and the Gram-Schmidt process, one can show that there is a $C$ such that
   \[   \|  \mathcal{P} \alpha - \alpha \|_{A(\riem_1')} \leq C \| Q \circ \text{Res}(\alpha) \|_{\mathbb{C}^m}.  \]
   Again using Lemma \ref{th:uniform_controlled_by_L^2} to control the uniform norm on $\gamma_1 \cup \cdots \cup \gamma_m$
   by the norm on $A(\riem_1)$, there is also an $M$ such that
   \[  \| Q (\alpha) \|_{\mathbb{C}^m} \leq M \| \alpha \|_{A(\riem_1)},  \]
   for all $\alpha \in A(\riem_1)$.

   Now let $\beta \in A_e(\riem_1)$ and let $\varepsilon >0$.   Choose $\alpha \in A(\riem_1')$ such
  that $\| \beta - \text{Res}\, \alpha \|_{A(\riem_1)} <\varepsilon$.  Using the fact that $Q(\beta)=0$ together with the two preceding bounds, we have
   \begin{align*}
    \| \text{Res}\, \alpha - \text{Res}\, \mathcal{P} \alpha \|_{A(\riem_1)} & \leq
     \| \alpha - \mathcal{P} \alpha \|_{A(\riem_1')} \leq C \| Q(\text{Res}\, \alpha) \|_{\mathbb{C}^m} \\
      & =  C \| Q(\text{Res}\, \alpha - \beta) \|_{\mathbb{C}^m} \leq C \cdot M \cdot \| \text{Res}\, \alpha - \beta \|_{A(\riem_1)} \\ & \leq C M \varepsilon.
   \end{align*}
   Thus
   \begin{align*}
    \| \beta - \text{Res}\, \mathcal{P} \alpha \|_{A(\riem_1)} & \leq \| \beta - \text{Res}\,
     \alpha \|_{A(\riem_1)}  + \| \text{Res}\, \alpha - \text{Res}\, \mathcal{P} \alpha \|_{A(\riem_1)} \\
      & \leq (1 + CM) \varepsilon.
   \end{align*}
   Since $\text{Res}\, \mathcal{P} \alpha \in A_e(\riem_1),$ this completes the proof of the first claim.

  To prove the second claim, fix $i=1$ or $2$ and set $\mathcal{B}_1 = A_i$, $\mathcal{B}_2= U$, in Proposition \ref{prop:AskBar} and let $\riem = R$ be as above (with disks removed if necessary as above). Choose a single
   curve $\gamma$ in $A_i$ which is homotopic to $\Gamma$, and define $Q(\alpha) = \int_\gamma \alpha$.
    The proof now proceeds identically, but with a single curve.
  \end{proof}
  \begin{remark} We will only use the second part of Theorem \ref{th:slightly_bigger_density}.  However, part (a) is no more difficult than
  part (b), and
  will be useful in future applications.
  \end{remark}
  \begin{remark}  Note that the result of Askaripour and Barron \cite{AskBar} will not in general
  apply to exact one-forms without
   restrictions on the relation between the homology of $\riem_1$ and $\riem_1'$.
   It is clear that weaker conditions could be imposed, so that we have not used the full power of
   their result.
  \end{remark}

  We will also need a density result of another kind.
  Let $\Gamma$ be a strip-cutting Jordan curve in a compact Riemann surface $R$, which
  separates $R$ into two components $\riem_1$ and $\riem_2$.  Let $A$ be a collar neighbourhood of $\Gamma$ in $\riem_1$.
  By Theorem \ref{th:transmission_equivalences} the
  boundary values of $\mathcal{D}_{\text{harm}}(A)$ exist conformally non-tangentially in $\riem_1$ and are themselves CNT boundary values of an element of $\mathcal{D}_{\text{harm}}(\riem_1)$.  We then define
  \begin{align*}
   \mathfrak{G}: \mathcal{D}_{\text{harm}}(A) & \rightarrow \mathcal{D}_{\text{harm}}(\riem_1)  \\
   h & \mapsto \tilde{h}
  \end{align*}
  where $\tilde{h}$ is the unique element of $\mathcal{D}_{\text{harm}}(\riem_1)$ with CNT boundary
  values equal to those of $h$.   We have the following result:

  \begin{theorem}[\cite{Schippers_Staubach_general_transmission}] \label{th:iota_bounded} Let $\Gamma$ be a strip-cutting Jordan curve in a compact Riemann surface $R$.  Assume
  that $\Gamma$ separates $R$ into two components, one of which is $\riem$.  Let $A$ be a
   collar neighbourhood of $\Gamma$  in  $\riem$.  Then the associated map $\mathfrak{G}:\mathcal{D}_{\mathrm{harm}}(A) \rightarrow \mathcal{D}_{\mathrm{harm}}(\riem)$ is bounded.
  \end{theorem}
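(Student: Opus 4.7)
The plan is to invoke Theorem \ref{th:transmission_equivalences} to define $\tilde h = \mathfrak{G} h$ unambiguously, and then bound its Dirichlet seminorm by constructing an explicit (non-harmonic) Dirichlet-finite competitor on $\riem$ whose CNT boundary values agree with $h$ on $\Gamma$, and projecting it orthogonally onto $\mathcal{D}_{\mathrm{harm}}(\riem)$.

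Normalize $h$ so that $h(q_0) = 0$ for a chosen $q_0 \in A$; since the Dirichlet seminorm vanishes on constants, this is harmless. Writing $\Gamma'$ for the second Jordan curve bounding $A$, pick a strictly smaller sub-collar $A_0 \subset A$ of $\Gamma$ whose closure is disjoint from $\Gamma'$, and choose a smooth cutoff $\chi$ on $R$ satisfying $\chi \equiv 1$ on $\overline{A_0}$ and $\chi \equiv 0$ both on a neighbourhood of $\Gamma'$ and off $A$. Define $f$ on $\riem$ by $f = \chi h$ on $A$ and $f = 0$ on $\riem \setminus A$; this extends smoothly across $\Gamma'$, and since $\chi \equiv 1$ in a neighbourhood of $\Gamma$, the CNT boundary values of $f$ on $\Gamma$ coincide with those of $h$.

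A Leibniz expansion yields
\[
\iint_\riem |df|^2 \;=\; \iint_A |d(\chi h)|^2 \;\leq\; 2 \iint_A \chi^2 |dh|^2 \,+\, 2 \iint_{\mathrm{supp}(d\chi)} |h|^2 |d\chi|^2,
\]
so the first term is bounded by $2\|h\|^2_{\mathcal{D}_{\mathrm{harm}}(A)}$. For the second, $\mathrm{supp}(d\chi)$ is a compact subset of $A$, and the second part of Lemma \ref{th:uniform_controlled_by_L^2} gives $\sup_{\mathrm{supp}(d\chi)}|h| \leq M\|h\|_{\mathcal{D}_{\mathrm{harm}}(A)_{q_0}}$, controlling the second term by a constant multiple of $\|h\|^2_{\mathcal{D}_{\mathrm{harm}}(A)_{q_0}}$. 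Now decompose $f = f_{\mathrm{harm}} + f_0$ orthogonally in the Dirichlet inner product on $\riem$ with $f_{\mathrm{harm}}$ harmonic. Since $f_0$ necessarily has vanishing CNT boundary values on $\Gamma$, the harmonic part $f_{\mathrm{harm}} = f - f_0$ inherits the CNT boundary values of $f$, and thus of $h$; by the uniqueness clause of Theorem \ref{th:transmission_equivalences}, $f_{\mathrm{harm}}$ agrees with $\tilde h$ up to a constant. Orthogonality gives $\|\tilde h\|^2_{\mathcal{D}_{\mathrm{harm}}(\riem)} \leq \iint_\riem |df|^2$, completing the bound.

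The principal technical obstacle is the orthogonal decomposition itself: establishing that the complement of $\mathcal{D}_{\mathrm{harm}}(\riem)$ inside the full Dirichlet-energy space on $\riem$ consists exactly of functions with vanishing CNT boundary value on $\Gamma$, when $\Gamma$ is only a strip-cutting Jordan curve (possibly non-rectifiable). This can be handled by exhausting $\riem$ from within by subdomains cut out by level curves of Green's function of $\riem$, where the classical Dirichlet principle applies directly, and then passing to the limit using lower semicontinuity of the Dirichlet seminorm together with the CNT convergence results established in Section \ref{se:preliminaries}.
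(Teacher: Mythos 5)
Your construction of the competitor is fine: the normalization at $q_0$, the cutoff $f=\chi h$, the Leibniz estimate, and the use of the second part of Lemma \ref{th:uniform_controlled_by_L^2} to control $\sup_{\mathrm{supp}\, d\chi}|h|$ by $\|h\|_{\mathcal{D}_{\mathrm{harm}}(A)_{q_0}}$ are all legitimate, and reducing the theorem to a Dirichlet-principle statement is a sensible strategy. Note, though, that this paper does not prove the theorem at all -- it is quoted from \cite{Schippers_Staubach_general_transmission} -- so your argument has to stand entirely on its own, and its weight rests on the step you yourself flag as the ``principal technical obstacle.''

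That step is a genuine gap, and your closing sketch does not close it. You assert that in the Dirichlet-orthogonal decomposition $f=f_{\mathrm{harm}}+f_0$ the piece $f_0$ ``necessarily has vanishing CNT boundary values on $\Gamma$.'' Two things are unproven there. First, $f_0$ is not harmonic, and nothing in Section \ref{se:preliminaries} (whose boundary-value theorems concern elements of $\mathcal{D}_{\mathrm{harm}}$) guarantees that a general Dirichlet-finite function on $\riem$ even possesses CNT limits outside a null set. Second, and more seriously, identifying the orthogonal complement of $\mathcal{D}_{\mathrm{harm}}(\riem)$ with the Dirichlet-finite functions whose CNT boundary values vanish except on a null set is precisely the Dirichlet principle relative to capacity-null exceptional sets on a possibly non-rectifiable strip-cutting curve; in this rough setting that equivalence is essentially the content of the theorem, not a routine fact one may invoke. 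Your proposed repair -- exhaust $\riem$ by the regions bounded by level curves $\Gamma_\epsilon$ of Green's function, apply the classical Dirichlet principle there, and pass to the limit by lower semicontinuity -- gives a subsequential locally uniform limit $u$ of the harmonic extensions $u_\epsilon$ with $\|du\|\leq\|df\|$, but it does not identify $u$: to conclude that $u$ has the same CNT boundary values as $h$ you must show that the limit of $u_\epsilon-f$ (extended by zero) has vanishing CNT boundary values, which is the same unproven assertion in a different guise, and the single-function CNT results of Section \ref{se:preliminaries} provide no uniform control near $\Gamma$ under such limits. (An alternative finish, proving the orthogonality $(d\tilde h,\, d(f-\tilde h))_{A_{\mathrm{harm}}(\riem)}=0$ by a Stokes argument on $\Gamma_\epsilon$ and letting $\epsilon\searrow 0$, runs into the same need for genuine boundary estimates, and the limiting-integral machinery of Section \ref{se:jump} cannot be used since it already presupposes the boundedness of $\mathfrak{G}$.) Until that equivalence is actually established, the proof is incomplete.
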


  \begin{theorem} \label{th:Dirichlet_annulus_dense} Let $\Gamma$, $R$, $A$ and $\riem$ be as above.  The
   image of $\mathcal{D}(A)$ under $\mathfrak{G}$ is dense in $\mathcal{D}_{\mathrm{harm}}(\riem_1)$.
  \end{theorem}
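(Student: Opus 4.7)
The approach is to transfer the problem to a planar annulus via the canonical collar chart and explicitly approximate anti-holomorphic boundary values by holomorphic ones using a Laurent-type construction.

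Let $\phi:A\to \mathbb{A}=\{r<|z|<1\}$ be the canonical collar chart on $A$, extending to a homeomorphism $\phi|_\Gamma:\Gamma\to \mathbb{S}^1$. Under $\phi$, harmonic Dirichlet functions on $A$ admit the decomposition $\mathcal{D}_{\mathrm{harm}}(A)=\mathcal{D}(A)+\overline{\mathcal{D}(A)}+\mathbb{C}\log|\phi|$. Since $|\phi|=1$ on $\Gamma$, the summand $\mathbb{C}\log|\phi|$ has $\mathrm{CNT}$ boundary value zero on $\Gamma$, hence lies in $\ker \mathfrak{G}$. Combined with the fact that $\mathfrak{G}(h|_A)=h$ for any $h\in \mathcal{D}_{\mathrm{harm}}(\riem_1)$ (so $\mathfrak{G}$ is surjective onto $\mathcal{D}_{\mathrm{harm}}(\riem_1)$), this yields the algebraic identity $\mathcal{D}_{\mathrm{harm}}(\riem_1)=\mathfrak{G}(\mathcal{D}(A))+\mathfrak{G}(\overline{\mathcal{D}(A)})$. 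It therefore suffices to show that $\mathfrak{G}(\overline{\mathcal{D}(A)})\subseteq \overline{\mathfrak{G}(\mathcal{D}(A))}$ in the Dirichlet semi-norm.

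For this, given $\overline g \in \overline{\mathcal{D}(A)}$, let $G\in \mathcal{D}(\mathbb{A})$ be the pushforward of $g$ with Laurent expansion $G(z)=\sum_{n\in\mathbb{Z}}b_n z^n$. On $\mathbb{S}^1$ one has $\overline{G}=\sum_n \overline{b_n}z^{-n}$, which we approximate by the Laurent polynomials $g_N(z)=\sum_{|n|\leq N}\overline{b_n}z^{-n}\in \mathcal{D}(\mathbb{A})$. Setting $f_N=g_N\circ \phi\in \mathcal{D}(A)$, the $\mathrm{CNT}$ boundary values $f_N|_\Gamma$ converge to $\overline g|_\Gamma$ in the $H^{1/2}(\mathbb{S}^1)$-trace norm via $\phi|_\Gamma$, as the Fourier tail vanishes.

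The main obstacle is passing from trace convergence to convergence of $\mathfrak{G}(f_N)$ in the Dirichlet semi-norm on $\mathcal{D}_{\mathrm{harm}}(\riem_1)$: Theorem \ref{th:iota_bounded} alone is insufficient, since $\|f_N-\overline g\|_{\mathcal{D}_{\mathrm{harm}}(A)}$ does not tend to zero (as $\mathcal{D}(A)\perp \overline{\mathcal{D}(A)}$ in that inner product). The resolution exploits that $\mathfrak{G}$ depends only on $\mathrm{CNT}$ boundary values on $\Gamma$: for each $N$ one replaces $f_N-\overline g$ by $\widetilde{H}_N\in \mathcal{D}_{\mathrm{harm}}(A)$, the harmonic function on $A$ with the same $\mathrm{CNT}$ boundary values as $f_N-\overline g$ on $\Gamma$ and vanishing on the inner boundary of $A$. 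Then $\mathfrak{G}(\widetilde{H}_N)=\mathfrak{G}(f_N-\overline g)$, and a classical Poisson-type estimate on the annulus bounds $\|\widetilde{H}_N\|_{\mathcal{D}_{\mathrm{harm}}(A)}$ by the $H^{1/2}(\mathbb{S}^1)$-norm of $(f_N-\overline g)|_\Gamma$, which tends to zero. Theorem \ref{th:iota_bounded} then yields $\mathfrak{G}(f_N)\to \mathfrak{G}(\overline g)$, as required. Theorem \ref{th:slightly_bigger_density}(2) plays a complementary role, allowing us to further approximate within $\mathcal{D}(A)$ by functions extending holomorphically across $\Gamma$, for which the boundary analysis reduces to the classical analytic setting.
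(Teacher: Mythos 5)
Your argument is correct in substance, but it takes a genuinely different route from the paper's. The paper first treats the model case $A=\mathbb{A}\subset\disk$, where $\mathfrak{G}(\mathbb{A},\disk)$ sends the Laurent polynomial $z^{-n}$ to $\bar{z}^n$, so the image of $\mathcal{D}(\mathbb{A})$ contains $\mathbb{C}[z,\bar{z}]$, which is dense in $\mathcal{D}_{\mathrm{harm}}(\disk)$; it then transports density to the general pair $(A,\riem_1)$ by exhibiting a bounded bijection $\rho$ with bounded inverse (built from the extension maps, restriction, and composition with a conformal map $F:\mathbb{A}\to A$) satisfying $\rho\,\mathfrak{G}(\mathbb{A},\disk)\,\mathcal{C}_F=\mathfrak{G}(A,\riem_1)$. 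You instead work directly on $A$: you decompose $\mathcal{D}_{\mathrm{harm}}(A)=\mathcal{D}(A)+\overline{\mathcal{D}(A)}+\mathbb{C}\log|\phi|$, use surjectivity of $\mathfrak{G}$ and the vanishing of $\mathfrak{G}$ on $\log|\phi|$ to reduce to showing $\mathfrak{G}(\overline{\mathcal{D}(A)})$ lies in the closure of $\mathfrak{G}(\mathcal{D}(A))$, approximate the anti-holomorphic boundary data by holomorphic Laurent polynomials, and then repair the failure of interior convergence by re-extending the boundary discrepancy harmonically with zero data on the inner boundary, so its Dirichlet norm is controlled by the $H^{1/2}(\mathbb{S}^1)$ norm of the Fourier tail; Theorem \ref{th:iota_bounded} then finishes. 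Your identification of the obstacle (that $f_N\not\to\overline{g}$ in $\mathcal{D}_{\mathrm{harm}}(A)$, since only the boundary values converge) and the re-extension trick are exactly what make this work, and they exploit correctly that $\mathfrak{G}$ depends only on CNT boundary values. What each approach buys: the paper's proof needs no Sobolev trace theory at all, only boundedness of the transmission/extension operators and density of $\mathbb{C}[z,\bar{z}]$ in $\mathcal{D}_{\mathrm{harm}}(\disk)$, whereas yours is more self-contained at the level of the annulus but imports classical inputs ($H^{1/2}$ traces of $\mathcal{D}(\mathbb{A})$ and boundedness of the harmonic extension with mixed boundary data).

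Two small repairs. First, $\phi$ cannot literally be ``the canonical collar chart on $A$'': the canonical chart's domain is a particular collar determined by Green's function, not the prescribed $A$. Take instead any conformal equivalence $\phi:A\to\mathbb{A}$ (it extends homeomorphically to $\Gamma$ since $\Gamma$ is a strip-cutting Jordan curve), and note that identifying CNT boundary values and null sets with non-tangential values on $\mathbb{S}^1$ under $\phi$ is then a conformal-invariance statement from \cite{Schippers_Staubach_general_transmission}; this is the same kind of background fact the paper's own proof invokes in its factorization step, but it should be stated and cited. (Shrinking to a canonical collar $A'\subset A$ would not do: the image of $\mathcal{D}(A)$ under $\mathfrak{G}$ is contained in the image of $\mathcal{D}(A')$ under the corresponding map for $A'$, and density of the larger image says nothing about the smaller.) Second, the closing appeal to Theorem \ref{th:slightly_bigger_density}(2) is superfluous: it plays no role in your argument, and in the paper it is used elsewhere (in the proof of Theorem \ref{th:jump_reflection_invariant}), not here.
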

  \begin{proof}
   First, we prove this in the case that $A = \mathbb{A}$ is an annulus with outer boundary $\mathbb{S}^1$ and $\riem_1 = \disk$, and $\mathfrak{G}$ is
   \[  \mathfrak{G}(\mathbb{A},\mathbb{D}):\mathcal{D}_{\text{harm}}(\mathbb{A}) \rightarrow \mathcal{D}_{\text{harm}}(\disk).  \]
   Now the set of Laurent polynomials $\mathbb{C}[z,z^{-1}]$ are contained in  $\mathbb{D}_{\text{harm}}(\mathbb{A})$, and
   \[  \mathfrak{G}(\mathbb{A},\disk) z^n = z^n \ \ \text{and} \ \
   \mathfrak{G}(\mathbb{A},\disk) z^{-n}  = \bar{z}^n.  \]
   Since the set $\mathbb{C}[z,\bar{z}]$ of polynomials in $z$, $\bar{z}$ is dense in $\mathcal{D}_{\text{harm}}(\disk)$, this proves the claim.

   Next, let $F:\mathbb{A} \rightarrow A$ be a conformal map.  Define the
   composition map
   \begin{align*}
     \mathcal{C}_F :\mathcal{D}_{\text{harm}}(A) & \rightarrow \mathcal{D}_{\text{harm}}(\mathbb{A}) \\
     h & \mapsto h \circ F,
   \end{align*}
   which is bounded by conformal invariance of the Dirichlet norm, and furthermore is a bijection with bounded inverse $\mathcal{C}_{F^{-1}}$.  Similarly the restriction of $\mathcal{C}_F$ to $\mathcal{D}(A)$ is a bounded bijection onto $\mathcal{D}(\mathbb{A})$.
   Thus, the image of $\mathcal{D}(A)$ under
   $\mathfrak{G}(\mathbb{A},\mathbb{D}) \mathcal{C}_F$ is dense in $\mathcal{D}_{\text{harm}}(\disk)$.

   Now denote the restriction map from $\mathcal{D}_{\text{harm}}(\disk)$
   to $\mathcal{D}_{\text{harm}}(\mathbb{A})$ by $\text{Res}(\disk,\mathbb{A})$
   and similarly for $\text{Res}(\riem_1,A)$.  Define the linear map
   \[ \rho = \mathfrak{G}(A,\riem_1) \, \mathcal{C}_{F^{-1}} \, \text{Res}(\disk,\mathbb{A}) : \mathcal{D}_{\text{harm}}(\disk)
   \rightarrow \mathcal{D}_{\text{harm}}(\riem_1). \]
   This is obviously bounded, with bounded inverse
   \[ \rho^{-1} = \mathfrak{G}(\mathbb{A},\mathbb{D})\, \mathcal{C}_F\, \text{Res}(\riem_1,A) \]
   by uniqueness of Dirichlet bounded harmonic extensions of elements of $\mathcal{H}(\mathbb{S}^1)$ and $\mathcal{H}(\Gamma)$ to $\mathcal{D}_{\text{harm}}(\disk)$ and $\mathcal{D}_{\text{harm}}(\riem_1)$
   respectively.

   Now by definition of $\mathfrak{G}(\mathbb{A},\disk)$, for any $h \in \mathcal{D}_{\text{harm}}(A)$, the $\mathrm{CNT}$ boundary values of
   \[  \mathcal{C}_{F^{-1}} \, \text{Res}(\disk,\mathbb{A})\,\mathfrak{G}(\mathbb{A},\disk)\,
   \mathcal{C}_F h \] equal those of $h$.  Thus we obtain the following factorization of $\mathfrak{G}(A,\riem_1)$:
   \[   \rho \, \mathfrak{G}(\mathbb{A},\mathbb{D}) \mathcal{C}_F =
      \mathfrak{G}(A,\riem_1) \mathcal{C}_{F^{-1}}\, \text{Res}(\disk,\mathbb{A})
       \,\mathfrak{G}(\mathbb{A},\disk) \, \mathcal{C}_F = \mathfrak{G}(A,\riem_1).
      \]
   Since the image of $\mathcal{D}(A)$ under $\mathfrak{G}(\mathbb{A},\mathbb{D}) \mathcal{C}_F$ is dense in $\mathcal{D}_{\text{harm}}(\disk)$, and $\rho$
   is a bounded bijection with bounded inverse, this completes the proof.

  \end{proof}

 \end{subsection}
 \begin{subsection}{Limiting integrals from two sides}
  In this section, we show that for quasicircles, the limiting integral defining $J_q(\Gamma)$
  can be taken from either side of $\Gamma$, with the same result.

 We will need to write the limiting integral in terms of holomorphic extensions to collar neighbourhoods.
  The integral in the definition of
  $J_q(\Gamma)$ is easier to work with when restricting to $\mathcal{D}(A)$.  To make use of this simplification, we must first show that the limiting
  integrals of $\mathfrak{G} h$ and $h$ are equal.

  For $h \in \mathcal{D}(A)$, letting $\Gamma_\epsilon$ be level curves of Green's
  function of $\riem_1$ with respect to some fixed point $p \in \riem_1$, denote
  \[  J_q(\Gamma)'h(z) = - \frac{1}{\pi i} \lim_{\epsilon \searrow 0} \int_{\Gamma_\epsilon}
    \partial_w g(w;z,q) h(w) \]
  for $q$ fixed in $\riem_2$.
  We use the notation $J_q(\Gamma)'$ to distinguish it from the operator
  $J_q(\Gamma)$, which applies only to elements of $\mathcal{D}_{\text{harm}}(\riem_1)$.
  For $\epsilon$ in some interval $(0,R)$ the curve $\Gamma_\epsilon$
 lies entirely in $A$, so this makes sense.  Because the integrand is holomorphic,
 the integral is independent of $\epsilon$ for  $\epsilon \in (0,R)$.

 We have the following.
 \begin{theorem}  \label{th:integral_in_DofA_equal} Let $\Gamma$ be a quasicircle
 and $A$ be a collar neighbourhood of $\Gamma$ in $\riem_1$.  Fix $q \in R \backslash \Gamma$.
 Then for all $h \in \mathcal{D}(A)$ and $z \in R \backslash \Gamma$
  \begin{equation} \label{eq:equality_of_integrals_annulus}
    J_q(\Gamma)'h(z)  = J_q(\Gamma) \mathfrak{G} h(z).
  \end{equation}

 \end{theorem}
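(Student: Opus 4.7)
The plan is to reduce the identity to showing that a limiting integral of the difference $u := \mathfrak{G} h - h$ against the Cauchy-type kernel $\partial_w g(\cdot;z,q)$ vanishes, where $u \in \mathcal{D}_{\mathrm{harm}}(A)$ has zero $\mathrm{CNT}$ boundary values on $\Gamma$ by the definition of $\mathfrak{G}$, and then to prove this vanishing by an explicit Fourier--Laurent computation in the canonical collar chart.

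First, since $J_q(\Gamma)'h$ depends only on the germ of $h$ near $\Gamma$, I may replace $A$ by a sub-collar $B \subset A$ bounded by $\Gamma$ and an analytic Jordan curve $\Gamma'$, chosen close enough to $\Gamma$ that $z, q \notin \overline{B}$. On $B$ the integrand $\partial_w g(\cdot;z,q) \cdot h$ is holomorphic in $w$, so by Cauchy's theorem $\int_{\Gamma_\epsilon} \partial_w g \cdot h = \int_{\Gamma'} \partial_w g \cdot h$ once $\Gamma_\epsilon \subset B$. Replacing $h$ by $\mathfrak{G} h$ in the integrand and applying Stokes' theorem on the region $B_\epsilon$ between $\Gamma_\epsilon$ and $\Gamma'$ (using that $\partial_w g$ is closed, so $d(\partial_w g \cdot \mathfrak{G} h) = \partial_w g \wedge \overline{\partial} \mathfrak{G} h$), and passing to the limit $\epsilon \to 0$, gives an analogous formula for $\lim_\epsilon \int_{\Gamma_\epsilon} \partial_w g \cdot \mathfrak{G} h$. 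Subtracting these and using $\overline{\partial} h = 0$ yields
\[ J_q(\Gamma)'h(z) - J_q(\Gamma) \mathfrak{G} h(z) = \frac{1}{\pi i} \lim_{\epsilon \to 0} \int_{\Gamma_\epsilon} \partial_w g(w;z,q) \, u(w), \]
so the theorem reduces to proving this limit vanishes.

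For this I pull back via the canonical collar chart $\phi$ of $(\riem_1, p_1)$, which maps $B$ conformally onto an annulus $\mathbb{A} = \{r_0 < |\zeta| < 1\}$ and sends $\Gamma_\epsilon$ to circles $\{|\zeta| = s_\epsilon\}$ with $s_\epsilon \to 1^-$. Setting $\tilde u = u \circ \phi^{-1}$ and $\tilde G(\zeta)\, d\zeta := (\phi^{-1})^* \partial_w g(\cdot;z,q)$, the function $\tilde u$ is Dirichlet-bounded harmonic on $\mathbb{A}$ with zero $L^2$ trace on $|\zeta|=1$, while $\tilde G$ is holomorphic on $\mathbb{A}$ with $\tilde G\, d\zeta \in L^2(\mathbb{A})$. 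The mode-by-mode ODE for the Fourier components $u_n(\rho)$ of $\tilde u$, together with the vanishing-trace condition $u_n(1) = 0$, force
\[ \tilde u(\rho e^{i\theta}) = b_0 \log \rho + \sum_{n \neq 0} a_n (\rho^n - \rho^{-n}) e^{in\theta}, \]
and expanding $\tilde G(\zeta) = \sum_m G_m \zeta^m$, a direct orthogonality computation gives
\[ \int_{|\zeta| = s} \tilde G(\zeta) \, \tilde u(\zeta) \, d\zeta = 2\pi i \left[ b_0 G_{-1} \log s + \sum_{n \neq 0} G_{-n-1}\, a_n\, (1 - s^{-2n}) \right]. \]

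The main obstacle is showing this tends to zero as $s \to 1^-$. Each term vanishes individually, but a uniform bound is required to interchange limit and sum. The Dirichlet norm of $\tilde u$ yields the decay $|a_n|^2 \le C\, r_0^{2|n|}/|n|$ in a summable sense, and finiteness of the $L^2$ norm of $\tilde G\, d\zeta$ yields the dual bound $|G_{-n-1}|^2 \le C\, |n|\, r_0^{2|n|}$ for $n > 0$ (with an analogous bound for $n < 0$). Combined with the elementary estimates $|1 - s^{-2n}| \le C|n|(1-s)$ when $|n|(1-s) \le 1$ and $|1 - s^{-2n}| \le s^{-2|n|}$ otherwise, splitting the sum at $|n| \sim 1/(1-s)$ shows that the low-frequency portion is of order $(1-s)$ while the high-frequency tail is geometric in $(r_0/s)^{4|n|}$ and hence super-polynomially small in $1/(1-s)$; the logarithmic term $b_0 G_{-1} \log s$ trivially tends to zero, completing the argument.
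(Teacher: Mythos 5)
Your argument is correct in substance, but it takes a genuinely different route from the paper's proof. The paper first verifies \eqref{eq:equality_of_integrals_annulus} on the dense subspace $\mathcal{D}_{\text{poly}}(A)$ of Laurent-type functions $p\circ f^{-1}$ in the canonical collar chart, using the relation $\overline{f^{-1}(w)}^{\,n}=e^{-2nc\epsilon}(f^{-1}(w))^{-n}$ on $\Gamma_\epsilon$ together with the $\epsilon$-independence of integrals of holomorphic integrands, and then passes to general $h\in\mathcal{D}(A)$ by a density/continuity argument resting on the boundedness of $J_q(\Gamma)$ and $\mathfrak{G}$ and the sup-norm estimate of Lemma \ref{th:uniform_controlled_by_L^2}. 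You instead subtract the two sides, reducing the theorem to the single statement that $\lim_{\epsilon\searrow 0}\int_{\Gamma_\epsilon}\partial_w g(w;z,q)\,u(w)=0$ for $u=\mathfrak{G}h-h$, a Dirichlet-bounded harmonic function on a collar with vanishing $\mathrm{CNT}$ boundary values, and you prove this by an explicit Fourier--Laurent computation in the canonical collar chart, playing the coefficient decay forced by the Dirichlet norm of $u$ and the vanishing outer trace against the growth permitted by the $L^2$ norm of the pulled-back kernel. This is self-contained (no density argument is needed), quantitative, and it isolates exactly the mechanism that the monomial computation exploits implicitly, namely that the limiting integral of a collar-harmonic Dirichlet-bounded function is determined by its $\mathrm{CNT}$ boundary data; what the paper's route buys instead is brevity, given that the boundedness of $J_q(\Gamma)$ and $\mathfrak{G}$ is already in hand. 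Two points in your sketch deserve care in a full write-up, though neither is a genuine gap: (i) passing from ``$\mathrm{CNT}$ boundary values vanish off a null set'' to the trace conditions $a_0=0$ and $a_n+b_n=0$ uses the standard identification, for Dirichlet-finite harmonic functions on an annulus, of a.e.\ nontangential limits with the $L^2$ limit of the circle restrictions (for instance, split $\tilde u$ into a Dirichlet-finite harmonic function on the disk plus a remainder whose series in $\rho^{-|n|}$ and $\log\rho$ converges uniformly up to the outer circle); (ii) your claimed bound on the kernel coefficients is available only for $n>0$; for $n<0$ the $L^2$ norm gives merely $|G_{-n-1}|^2\lesssim |n|$ in a summable sense, with no geometric factor, but this is harmless since there $|1-s^{-2n}|\le 1$ and the geometric decay of $a_n$ alone dominates the tail; similarly the tail estimate should involve $(r_0/s)^{2|n|}$ rather than $(r_0/s)^{4|n|}$. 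With these small adjustments your proof goes through.
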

 \begin{proof}
  First we exhibit a dense subset of functions for which this is true.  Let $f:\{ z\,:\, r<|z|<1 \} \rightarrow A$ be the
  inverse of a canonical
  collar chart.   Letting $\mathbb{C}(z)$ denote the Laurent polynomials in $z$, define
  \[  \mathcal{D}_{\text{poly}}(A) = \{  p \circ f^{-1} \,:\, p \in \mathbb{C}(z) \} \subset \mathcal{D}(A).    \]
  Now $\mathcal{D}_{\text{poly}}(A)$ is dense in $\mathcal{D}(A)$, because the polynomials are dense in $\mathcal{D}(\mathbb{A})$
  where $\mathbb{A} = \{z : r<|z|<1 \}$, and $\mathcal{C}_f h = h \circ f$ is an isometry.
  If one fixes any point $x \in A$, and denotes $\mathcal{D}_{\text{poly}}(A)_x = \{ u \in
  \mathcal{D}_{\text{poly}}(A): u(x)=0 \}$
   then $\mathcal{D}_{\text{poly}}(A)_x$ is dense in $\mathcal{D}(A)_x$.
   Since $J_q(\Gamma)$ and $J_q'(\Gamma)$ clearly agree on constants,
   we may assume that elements of
   $\mathcal{D}(A)$ are so normalized if need be.

  We claim that (\ref{eq:equality_of_integrals_annulus}) holds for all $h \in \mathcal{D}_{\text{poly}}(A)$.  To see this,
  observe that for any $\epsilon$, $f^{-1}$ takes the curve $\Gamma_\epsilon$ to the curve
  $|z|=e^{-c\epsilon}$ for some $c>0$ which is independent of $\epsilon$.  So for $z \in \Gamma_\epsilon$, $\overline{f^{-1}(z)}^n = e^{-2nc\epsilon} (f^{-1}(z))^{-n}$ for any $n \in \mathbb{Z}$.   Thus
  \begin{align*}
       \frac{1}{\pi i} \lim_{\epsilon \searrow 0} \int_{\Gamma_\epsilon}\partial_w g(w;z,q)
       \overline{f^{-1}(w)}^n & =  \frac{1}{\pi i} \lim_{\epsilon \searrow 0} e^{-2n c \epsilon} \int_{\Gamma_\epsilon}\partial_w g(w;z,q)  (f^{-1}(w))^{-n} \\
       & = \frac{1}{\pi i} \lim_{\epsilon \searrow 0} \int_{\Gamma_\epsilon}\partial_w g(w;z,q)  (f^{-1}(w))^{-n}
  \end{align*}
  where we use the fact that the integrand on the right side of the first line is holomorphic in $w$
  and thus the integral is independent of $\epsilon$.  This proves the claim.

 By Lemma \ref{th:uniform_controlled_by_L^2}, for compact $K$ there is an $M_K$ such that
 for all $g \in \mathcal{D}_{\text{harm}}(A)_x$
  \begin{equation}  \label{eq:uniform_Dirichlet_control}
  \sup_{z\in K}|g(z)| \leq M_K \| g \|_{\mathcal{D}_{\text{harm}}(A)}.
  \end{equation}

   The theorem now follows from the boundedness of $\mathfrak{G}$ and of $J_q(\Gamma)$.  To see this let $h \in \mathcal{D}(A)$.  Given $u \in \mathcal{D}_{\text{poly}}(A)$ we have that
  \[  \| J_q(\Gamma) \mathfrak{G} u - J_q(\Gamma) \mathfrak{G} h \|_{\mathcal{D}_{\text{harm}} (\riem_1 \cup\riem_2)}
     \leq \| J_q(\Gamma) \| \| \mathfrak{G} \| \| u - h \|_{\mathcal{D}(A)}.  \]
     Fixing any $z \in \riem_1 \cup \riem_2$,  let $K$ be a compact set containing $z$.  By \eqref{eq:uniform_Dirichlet_control}, the norm estimate above, and the normalization of $J_q(\Gamma)$
      we have that for any
     $\varepsilon >0$, there is a $\delta_1 >0$ so that
     \begin{equation}  \label{eq:temp_second_term_epsilon}
        \sup_{z\in K}|J_q(\Gamma) (\mathfrak{G} u -  \mathfrak{G} h)(z)| < \varepsilon/2
     \end{equation}
   whenever $\| u - h \|_{\mathcal{D}(A)} < \delta_1$.

   Because the integrands of $J_q(\Gamma)' u$ and $J_q(\Gamma)' h$ are holomorphic,
   the limiting integral is equal to the integral over $\Gamma'$ for some fixed level curve $\Gamma'= \Gamma_\epsilon \subset A$.   Recall (see above) that we can assume that
   $h$ and $u$ are in $\mathcal{D}(A)_x$ for some fixed $x \in A$.
   Again by (\ref{eq:uniform_Dirichlet_control}), there is a $M_{\Gamma'}$ so that $\sup_{z\in \Gamma'}| (u - h)(z) | \leq M_{\Gamma'} \| u - h \|_{\mathcal{D}(A)}$.
  Using the fact that $\partial_w g(w;z,q)$ is bounded on the compact set $K$ for $w \in \Gamma'$, there is a $\delta_2$ such that $ \| u - h \|_{\mathcal{D}(A)} < \delta_2$ implies that
  \begin{equation} \label{eq:temp_first_term_epsilon}
   \sup_{z\in K}|J_q(\Gamma)' (u-h) (z)| < \varepsilon/2.
  \end{equation}

  Using the fact that $J_q(\Gamma)' u = J_q(\Gamma) \mathfrak{G} u$, we have that
  \[  \sup_{z\in K} |J_q(\Gamma)' h (z)- J_q(\Gamma) \mathfrak{G} h (z) | \leq \sup_{z\in K}| J_q(\Gamma)'(h-u)(z) | + \sup_{z\in K}|J_q(\Gamma) (\mathfrak{G} u - \mathfrak{G} h)(z) |.    \]
  Combining this with (\ref{eq:temp_first_term_epsilon}) and (\ref{eq:temp_second_term_epsilon}),
  by the density of $\mathcal{D}(A)_{\text{poly}}$ we see that
  \[ \sup_{z\in K}   |J_q(\Gamma)' h (z)- J_q(\Gamma) \mathfrak{G} h (z) | < \varepsilon   \]
  for all $\varepsilon >0$, which proves the theorem.


 \end{proof}
 Since every collar neighbourhood of $\Gamma$ contains a canonical collar neighbourhood $\Gamma$, we have

\begin{corollary}  Proposition $\ref{th:integral_in_DofA_equal}$ holds for any collar neighbourhood
  $A$ of $\Gamma$ in $\riem_1$.
 \end{corollary}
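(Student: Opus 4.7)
The plan is to reduce the statement to the case of a canonical collar, for which Theorem \ref{th:integral_in_DofA_equal} has already been established in the preceding proof (which relied on the canonical chart $f:\{r<|z|<1\}\to A$). The key observation, flagged in the sentence just before the corollary, is that every collar neighbourhood $A$ of $\Gamma$ in $\riem_1$ contains a canonical collar neighbourhood $A'$: by choosing the parameter $r$ in the definition of the canonical collar chart close enough to $1$, the domain of univalence $\{e^{-2\pi r/m} < |z| < 1\}$ pulls back under $\phi = \exp[-2\pi(g_{p_1} + i\tilde g)/m]$ to a canonical collar arbitrarily close to $\Gamma$, hence eventually contained in $A$.

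Given $h \in \mathcal{D}(A)$, I would set $h' = h|_{A'} \in \mathcal{D}(A')$. Two things then need to be checked. First, $\mathfrak{G}h = \mathfrak{G}h'$: by definition $\mathfrak{G}$ produces the unique element of $\mathcal{D}_{\mathrm{harm}}(\riem_1)$ with prescribed CNT boundary values on $\Gamma$, and since $h$ and $h'$ agree on the neighbourhood $A'$ of $\Gamma$, they have identical CNT limits off a null set. Second, $J_q(\Gamma)'h(z) = J_q(\Gamma)'h'(z)$ for every $z \in R \setminus \Gamma$: the level curves $\Gamma_\epsilon$ of $g_{\riem_1}(\cdot,p_1)$ lie in $A'$ for all sufficiently small $\epsilon$, and because the integrand $\partial_w g(w;z,q)\,h(w)$ is holomorphic in $w$ throughout $A'$, the integral is independent of $\epsilon$ on that range; thus the limit uses only values of $h$ inside $A'$.

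Combining these two equalities with Theorem \ref{th:integral_in_DofA_equal} applied to the canonical collar $A'$ and to $h'$ yields
\[
  J_q(\Gamma)'h(z) \;=\; J_q(\Gamma)'h'(z) \;=\; J_q(\Gamma)\,\mathfrak{G}h'(z) \;=\; J_q(\Gamma)\,\mathfrak{G}h(z),
\]
which is the desired identity for the arbitrary collar $A$. There is no real obstacle; the content is a short reduction argument, and the only point requiring a moment's care is the equality $\mathfrak{G}h = \mathfrak{G}h'$, which rests on the fact (Theorem \ref{th:transmission_equivalences}) that an element of $\mathcal{D}_{\mathrm{harm}}(\riem_1)$ is determined by its CNT boundary values up to a null set, so that only a neighbourhood of $\Gamma$ inside $A$ is relevant.
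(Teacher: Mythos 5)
Your argument is correct and is precisely the expansion of the paper's one-line justification: the paper also reduces to a canonical collar $A'\subseteq A$ (obtained by taking $r$ close to $1$), using that the limiting integral only sees the level curves eventually contained in $A'$ and that $\mathfrak{G}$ depends only on the CNT boundary values, which $h$ and $h|_{A'}$ share.
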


 Finally we observe that the same proof, replacing $\partial_w g(w;z,q)$ with any holomorphic
  one-form $\alpha$ on a neighbourhood of $\Gamma$ shows the following.
  \begin{theorem}  \label{th:general_annulus_holo_integral_same}
   Let $\Gamma$ be a quasicircle.  Let $\alpha$ be a holomorphic one-form
    on an open neighbourhood of $\Gamma$ in $R$.  Let $A$ be a collar neighbourhood of $\Gamma$ in $\riem_1$.  Let $\Gamma_\epsilon$ be level
    curves of Green's function in $\riem_1$.
    Then for all $h \in \mathcal{D}(A)$,
    \[   \lim_{\epsilon \searrow 0} \int_{\Gamma_\epsilon} \alpha(w) h(w) =
     \lim_{\epsilon \searrow 0} \int_{\Gamma_\epsilon} \alpha(w) \mathfrak{G} h(w).   \]
  \end{theorem}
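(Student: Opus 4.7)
The plan is to transplant the proof of Theorem \ref{th:integral_in_DofA_equal} verbatim, with $\alpha$ playing the role of $\partial_w g(\cdot;z,q)$. First, the left-hand limit is essentially immediate: since $\alpha h$ is a holomorphic one-form on $A \cap U$, where $U$ is an open neighbourhood of $\Gamma$ on which $\alpha$ is defined, Cauchy's theorem gives $\int_{\Gamma_\epsilon} \alpha h = \int_{\Gamma_{\epsilon_0}} \alpha h$ whenever $\Gamma_\epsilon,\Gamma_{\epsilon_0} \subset A \cap U$, so the limit exists and equals this constant. Moreover, the functional $h \mapsto \int_{\Gamma_{\epsilon_0}} \alpha h$ is bounded on $\mathcal{D}(A)_x$ for any fixed $x \in A$: Lemma \ref{th:uniform_controlled_by_L^2} applied to the pair $(A,\Gamma_{\epsilon_0})$ controls $\sup_{\Gamma_{\epsilon_0}}|h|$ by the Dirichlet norm of $h$, and $\alpha$ is uniformly bounded on the compact curve $\Gamma_{\epsilon_0}$.

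Since both sides agree on constants ($\mathfrak{G}$ fixes constants), it suffices to treat $h \in \mathcal{D}(A)_x$. Following the proof of Theorem \ref{th:integral_in_DofA_equal}, I would reduce to the dense subspace $\mathcal{D}_{\text{poly}}(A)_x = \{p \circ f^{-1} \in \mathcal{D}(A)_x : p \in \mathbb{C}[z,z^{-1}]\}$, where $f:\mathbb{A} \to A$ is the inverse of the canonical collar chart. For $h \in \mathcal{D}_{\text{poly}}(A)_x$ the argument of that theorem applies directly: the identity $\overline{f^{-1}(w)}^n = e^{-2nc\epsilon}(f^{-1}(w))^{-n}$ on the level curve $\Gamma_\epsilon$ allows each antiholomorphic Laurent monomial (representing the boundary values of $\mathfrak{G}h$ in the canonical collar chart) to be replaced, on $\Gamma_\epsilon$, by its holomorphic counterpart up to an exponential factor tending to $1$; holomorphic invariance of the contour integral of $\alpha$ against the resulting holomorphic expression then yields the desired equality.

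The main obstacle is extending to all $h \in \mathcal{D}(A)_x$ by continuity, which requires the right-hand limit to exist and to depend continuously on $h$. For this I invoke Stokes' theorem: since $d\alpha = 0$ (as $\alpha$ is a holomorphic one-form) and $\mathfrak{G}h$ is harmonic, one computes $d(\alpha\,\mathfrak{G}h) = -\alpha \wedge \overline{\partial}(\mathfrak{G}h)$, so for $\epsilon < \epsilon_0$ with $\Gamma_{\epsilon_0} \subset A \cap U$,
\[
 \int_{\Gamma_\epsilon} \alpha\,\mathfrak{G}h \;-\; \int_{\Gamma_{\epsilon_0}} \alpha\,\mathfrak{G}h \;=\; \pm \iint_{B_{\epsilon,\epsilon_0}} \alpha \wedge \overline{\partial}(\mathfrak{G}h),
\]
where $B_{\epsilon,\epsilon_0} \subset \riem_1 \cap U$ is the annular region bounded by the two level curves. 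Since $\Gamma$ is a quasicircle and has measure zero, as $\epsilon \to 0$ the right-hand side converges to $\pm\iint_{B_{0,\epsilon_0}} \alpha \wedge \overline{\partial}(\mathfrak{G}h)$, which is finite and, by Cauchy--Schwarz, bounded by $\|\alpha\|_{L^2(B_{0,\epsilon_0})}$ times the Dirichlet seminorm of $\mathfrak{G}h$. Combined with Lemma \ref{th:uniform_controlled_by_L^2} to bound $\int_{\Gamma_{\epsilon_0}} \alpha\,\mathfrak{G}h$ and with Theorem \ref{th:iota_bounded} for the boundedness of $\mathfrak{G}$, this yields both existence of the limit and its continuity in $h \in \mathcal{D}(A)_x$; density of $\mathcal{D}_{\text{poly}}(A)_x$ then completes the proof.
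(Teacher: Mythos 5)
Your proposal matches the paper's argument: the paper proves this theorem simply by observing that the proof of Theorem \ref{th:integral_in_DofA_equal} goes through verbatim with $\alpha$ in place of $\partial_w g(\cdot\,;z,q)$, which is exactly what you do (density of $\mathcal{D}_{\text{poly}}(A)$, the $e^{-2nc\epsilon}$ identity on the level curves, then extension by continuity). Your Stokes-theorem argument for the existence and continuity in $h$ of $\lim_{\epsilon \searrow 0}\int_{\Gamma_\epsilon}\alpha\,\mathfrak{G}h$ is a sensible substitute for the one ingredient of that proof that does not transfer literally, namely the appeal to the boundedness of the operator $J_q(\Gamma)$, so the adaptation is sound.
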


   We now show that for quasicircles, one can define the jump operator $J(\Gamma)$ using either
   limiting integrals from within $\riem_1$ or from within $\riem_2$ with the same result.  We use the following temporary notation.  For $q \in R \backslash \Gamma$ let $J_q(\Gamma,\riem_i):\mathcal{D}_{\text{harm}}(\riem_1)
   \rightarrow \mathcal{D}_{\text{harm}}(\riem_1 \cup \riem_2)$ be defined
   by
   \[    J_q(\Gamma,\riem_i)h (z) = - \lim_{\epsilon \searrow 0}
        \frac{1}{\pi i}   \int_{\Gamma^{p_i}_\epsilon} \partial_w g(w;z,q) h(w).   \]
   For definiteness, we assume that all curves $\Gamma^{p_i}_\epsilon$ are oriented positively
   with respect to {\color{blue}$\riem_i$}.  Aside from this change of sign, all previous theorems apply
   equally to $J_q(\Gamma,\riem_1)$ and $J_q(\Gamma,\riem_2)$.
  \begin{theorem} \label{th:jump_reflection_invariant} Let $\Gamma$ be a quasicircle.  Then for all $q \in R \backslash \Gamma$
    \[ J_q(\Gamma,\riem_1) = J_q(\Gamma,\riem_2) \mathfrak{O}(\riem_1,\riem_2). \]
   \end{theorem}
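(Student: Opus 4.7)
My approach is to reduce the identity to an equality of contour integrals on a dense subspace using the Cauchy-type representation of Theorem \ref{th:integral_in_DofA_equal}, and then extend by continuity.

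First, combining Theorem \ref{th:Dirichlet_annulus_dense} with Theorem \ref{th:slightly_bigger_density}(2), I observe that for any doubly-connected neighbourhood $U$ of $\Gamma$, writing $A_i := U \cap \riem_i$, the set
\[
\mathcal{S} := \{\,\mathfrak{G}_1(h|_{A_1}) \,:\, h \in \mathcal{D}(U)\,\}
\]
is dense in $\mathcal{D}_{\mathrm{harm}}(\riem_1)$. Since $J_q(\Gamma, \riem_1)$, $J_q(\Gamma, \riem_2)$, and $\mathfrak{O}(\riem_1, \riem_2)$ are all bounded (Corollary \ref{co:boundedness_and_holomorphicity} and Theorem \ref{th:transmission_bounded}), it suffices to verify the identity on $\mathcal{S}$.

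For $h \in \mathcal{D}(U)$, put $H_i := \mathfrak{G}_i(h|_{A_i}) \in \mathcal{D}_{\mathrm{harm}}(\riem_i)$. Because $h$ is holomorphic across $\Gamma$, both $H_1$ and $H_2$ have CNT boundary values equal to $h|_\Gamma$, so by the defining property of the transmission operator $\mathfrak{O}(\riem_1,\riem_2) H_1 = H_2$. Applying Theorem \ref{th:integral_in_DofA_equal} on the $\riem_1$-side, and the entirely analogous statement on the $\riem_2$-side (the proof there used only that the integrand is holomorphic in $w$ on a collar neighbourhood), the theorem reduces to the equality
\[
\lim_{\epsilon \searrow 0}\int_{\Gamma^{p_1}_\epsilon} \partial_w g(w;z,q)\, h(w) \;=\; \lim_{\epsilon \searrow 0}\int_{\Gamma^{p_2}_\epsilon} \partial_w g(w;z,q)\, h(w).
\]

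For $z, q \notin U$, both integrands coincide as a holomorphic one-form in $w \in U$, and the two level curves are homologous to $\Gamma$ within $U$. Cauchy's theorem applied to the annular region of $U$ bounded by $\Gamma^{p_1}_\epsilon$ and $\Gamma^{p_2}_\epsilon$, together with the orientation convention fixed just above the theorem (positive with respect to $\riem_i$), delivers the desired equality. When $z$ or $q$ lies in $U$ one excises small loops around them and handles the residue contributions using Theorem \ref{th:jump_derivatives}, whose formulas have a mirrored counterpart on the $\riem_2$-side arising from the same orientation convention.

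The main obstacle is the careful bookkeeping of orientations: the curves $\Gamma^{p_1}_\epsilon$ and $\Gamma^{p_2}_\epsilon$ approach $\Gamma$ from opposite sides with opposite intrinsic orientations, and one must verify that the ``change of sign'' remark preceding the theorem, combined with Cauchy's theorem in the annular region between the two curves, yields the stated equality rather than its negative. Once this is set up correctly, the rest of the argument is a routine combination of the density results and Theorem \ref{th:integral_in_DofA_equal}.
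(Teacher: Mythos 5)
Your proposal follows essentially the same route as the paper's proof: verify the identity on the dense subspace $\mathfrak{G}_i\,\mathrm{Res}_i\,\mathcal{D}(U)$ (density coming from Theorems \ref{th:slightly_bigger_density} and \ref{th:Dirichlet_annulus_dense}), use the relation $\mathfrak{O}(\riem_1,\riem_2)\mathfrak{G}_1\mathrm{Res}_1 h = \mathfrak{G}_2\mathrm{Res}_2 h$, apply Theorem \ref{th:integral_in_DofA_equal} on both sides, compare the two contour integrals of the holomorphic form $\partial_w g(w;z,q)h(w)$, and conclude by boundedness of $J_q$, $\mathfrak{G}_i$, $\mathrm{Res}_i$ and $\mathfrak{O}$. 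One minor simplification over your write-up: for fixed $z,q\in R\setminus\Gamma$ and $\epsilon$ sufficiently small, the annular region between $\Gamma^{p_1}_\epsilon$ and $\Gamma^{p_2}_\epsilon$ contains neither $z$ nor $q$, so no excision or residue corrections (and no appeal to Theorem \ref{th:jump_derivatives}) are needed --- only the orientation bookkeeping you flag remains to be checked.
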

   \begin{proof}
    Let $U$ be a doubly-connected neighbourhood of $\Gamma$, bounded by
    $\Gamma_i \subset \riem_i$.  Let $A_i = U \cap \riem_i$.
    Then $A_i$ are collar neighbourhoods of $\Gamma$ in $\riem_i$. Let  $\mathfrak{G}_i: \mathcal{D}(A_i) \rightarrow \mathcal{D}_{\text{harm}}(\riem_i)$ be induced by these collar neighbourhoods for
    $i=1,2$.

    For any $h \in \mathcal{D}(U)$, let $\text{Res}_i\, h = \left. h \right|_{A_i}$.
    It follows immediately from the definition of $\mathfrak{G}_i$ that
    \begin{equation} \label{eq:reflection_true}
      \mathfrak{G}_2 \text{Res}_2\, h =
     \mathfrak{O}(\riem_1,\riem_2) \mathfrak{G}_1 \text{Res}_1\, h.
    \end{equation}

    Therefore
    \begin{align*}
     \lim_{\epsilon \searrow 0} \int_{\Gamma_\epsilon^{p_1}} \partial_w g(w;z,q) \mathfrak{G}_1 h(w)
     & = \lim_{\epsilon \searrow 0} \int_{\Gamma_\epsilon^{p_1}} \partial_w g(w;z,q) h(w) \\
     & = \lim_{\epsilon \searrow 0} \int_{\Gamma_\epsilon^{p_2}} \partial_w g(w;z,q) h(w) \\
     & = \lim_{\epsilon \searrow 0} \int_{\Gamma_\epsilon^{p_2}} \partial_w g(w;z,q) \mathfrak{G}_2 h(w),
    \end{align*}
    where we have used holomorphicity of the integrand in the second equality, and
    Proposition \ref{th:integral_in_DofA_equal} to obtain the first and the third  equalities.  Thus
    for all $h \in \mathfrak{G}_i \text{Res}_i\, \mathcal{D}(U)$,
    \[  J(\Gamma,\riem_1) h = J(\Gamma,\riem_2) \mathfrak{O} (\riem_1,\riem_2) h.  \]

    Now by Theorem
    \ref{th:slightly_bigger_density} $\text{Res}_i\, \mathcal{D}(U)$ is dense in $\mathcal{D}(A_i)$ for $i=1,2$ , and therefore by Theorem \ref{th:Dirichlet_annulus_dense} part (2)
    $\mathfrak{G}_i \text{Res}_i\, \mathcal{D}(U)$ is dense in $\mathcal{D}_{\text{harm}}(\riem_i)$.
    Since $\text{Res}_i$, $\mathfrak{G}_i$ and $J(\Gamma,\riem_i)$ are all bounded, this completes the proof.
   \end{proof}

   Thus one may think of $J(\Gamma)$ as an operator on $\mathcal{H}(\Gamma)$.

   In the rest of the paper, we return to the convention that $J_q(\Gamma)$ is an operator
   on $\mathcal{D}_{\text{harm}}(\riem_1)$.  However, Theorem \ref{th:jump_reflection_invariant} plays an important role in the proof that $T(\riem_1,\riem_2)$ is surjective.

   Also, by using Theorem \ref{th:general_annulus_holo_integral_same} and proceeding exactly
   as in the proof of Theorem \ref{th:jump_reflection_invariant} we obtain
   \begin{theorem} \label{th:holomorphic_sides_equal}  Let $\Gamma$ be a quasicircle.  Let $\alpha$ be a holomorphic one-form
    in an open neighbourhood of $\Gamma$.  For any $h \in \mathcal{D}_{\mathrm{harm}}(\riem_1)$
    \[ \lim_{\epsilon \searrow 0} \int_{\Gamma^{p_1}_\epsilon} h(w) \alpha(w)
     =   \lim_{\epsilon \searrow 0} \int_{\Gamma^{p_2}_\epsilon} [\mathfrak{O}(\riem_1,\riem_2) h](w) \alpha(w) \]
   \end{theorem}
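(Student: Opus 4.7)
The plan is to imitate the proof of Theorem \ref{th:jump_reflection_invariant} almost verbatim, substituting the holomorphic one-form $\alpha$ for the Green's kernel $\partial_w g(w;z,q)$. First I would choose a doubly-connected neighbourhood $U$ of $\Gamma$ contained in the open set where $\alpha$ is holomorphic, and set $A_i = U \cap \riem_i$, so that each $A_i$ is a collar neighbourhood of $\Gamma$ in $\riem_i$. Let $\mathfrak{G}_i : \mathcal{D}(A_i) \to \mathcal{D}_{\mathrm{harm}}(\riem_i)$ be the harmonic extension operators (bounded by Theorem \ref{th:iota_bounded}). Exactly as in equation \eqref{eq:reflection_true}, for any $\tilde{h} \in \mathcal{D}(U)$ the two harmonic extensions $\mathfrak{G}_1 \mathrm{Res}_1 \tilde{h}$ and $\mathfrak{G}_2 \mathrm{Res}_2 \tilde{h}$ share $\mathrm{CNT}$ boundary values on $\Gamma$, so $\mathfrak{G}_2 \mathrm{Res}_2 \tilde{h} = \mathfrak{O}(\riem_1,\riem_2)\, \mathfrak{G}_1 \mathrm{Res}_1 \tilde{h}$.

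The core of the argument, for such $\tilde{h} \in \mathcal{D}(U)$, consists of two steps. First, Theorem \ref{th:general_annulus_holo_integral_same} applied on each side (its proof is symmetric in $\riem_1$ and $\riem_2$) yields
\[ \lim_{\epsilon \searrow 0} \int_{\Gamma^{p_i}_\epsilon} \alpha\, (\mathfrak{G}_i \mathrm{Res}_i \tilde{h}) \;=\; \lim_{\epsilon \searrow 0} \int_{\Gamma^{p_i}_\epsilon} \alpha\, \tilde{h}, \qquad i=1,2. \]
Second, the one-form $\alpha \tilde{h}$ is holomorphic on $U$, and the level curves $\Gamma^{p_1}_\epsilon$ and $\Gamma^{p_2}_\epsilon$ are freely homotopic to $\Gamma$ inside $U$ with the same orientation, so Cauchy's theorem forces the two right-hand sides to coincide. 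Hence the identity of the theorem holds for every $h$ in the subspace $\mathfrak{G}_1 \mathrm{Res}_1 \mathcal{D}(U) \subset \mathcal{D}_{\mathrm{harm}}(\riem_1)$.

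To finish, I would extend the identity by density and continuity. Theorem \ref{th:slightly_bigger_density}(2), Theorem \ref{th:Dirichlet_annulus_dense}, and the boundedness of $\mathfrak{G}_1$ together show that $\mathfrak{G}_1 \mathrm{Res}_1 \mathcal{D}(U)$ is dense in $\mathcal{D}_{\mathrm{harm}}(\riem_1)$. The right-hand side of the identity depends continuously on $h$ through the bounded transmission operator $\mathfrak{O}(\riem_1,\riem_2)$ of Theorem \ref{th:transmission_bounded}, so after noting that constants can be handled trivially (both sides reduce to $c \lim_\epsilon \int_{\Gamma^{p_i}_\epsilon} \alpha$, and these agree again by Cauchy's theorem on $U$), it suffices to verify that each linear functional $h \mapsto \lim_\epsilon \int_{\Gamma^{p_i}_\epsilon} \alpha h$ is bounded on the Hilbert space $\mathcal{D}_{\mathrm{harm}}(\riem_i)_q$. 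This boundedness is the main technical point; I would establish it by fixing a Jordan curve $\Gamma' \subset A_i$ homotopic to $\Gamma$ and applying Stokes' theorem to the annular region between $\Gamma^{p_i}_\epsilon$ and $\Gamma'$: since $d\alpha = 0$ and $\partial h \wedge \alpha = 0$ for type reasons, $d(\alpha h) = \bar\partial h \wedge \alpha$, so the limit equals $\int_{\Gamma'} \alpha h \,+\, \iint \bar\partial h \wedge \alpha$, whose two terms are controlled by $\|h\|_{\mathcal{D}_{\mathrm{harm}}}$ via Lemma \ref{th:uniform_controlled_by_L^2} on the compact curve $\Gamma'$ and by the Cauchy--Schwarz inequality on the area integral.
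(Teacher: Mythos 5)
Your proposal is correct and follows essentially the same route as the paper, which proves this statement simply by invoking Theorem \ref{th:general_annulus_holo_integral_same} and repeating the argument of Theorem \ref{th:jump_reflection_invariant} (the factorization \eqref{eq:reflection_true}, holomorphicity of $h\alpha$ on $U$ to pass between the two families of level curves, and density via Theorems \ref{th:slightly_bigger_density} and \ref{th:Dirichlet_annulus_dense}). Your only addition is to spell out the continuity of the functionals $h \mapsto \lim_{\epsilon \searrow 0}\int_{\Gamma^{p_i}_\epsilon} h\alpha$ via Stokes' theorem and Lemma \ref{th:uniform_controlled_by_L^2}, a step the paper leaves implicit (it uses the same Stokes device later when bounding $Q_1$ in the proof of Theorem \ref{th:reflection_special_case}), so this is a welcome but not divergent elaboration.
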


\end{subsection}
\begin{subsection}{A transmission formula}
 In this section we prove an explicit formula for the transmission operator $\mathfrak{O}$ on the image of  the jump operator.
\begin{definition}\label{def: W}
 We denote by $W_k$ the linear subspace of $\mathcal{D}_{\text{harm}}(\riem_i)$ given by
  \[ W_k = \left\{ h \in \mathcal{D}_{\text{harm}}(\riem_k) \,:\, \lim_{\epsilon \searrow 0} \int_{\Gamma^{{p_k}}_\epsilon} h(w) \alpha(w) =0  \right\}\]
  for all $\alpha \in A(R)$ and for $k=1,2$. The elements of $W_k$ are the admissible functions for the jump problem.
\end{definition}  Let
 \[  J(\Gamma)_{\riem_k} h = \left. J(\Gamma) h \right|_{\riem_k}  \]
 for $k=1,2$.  We have the following result:
 \begin{theorem}  \label{th:reflection_special_case}
  Let $R$ be a compact surface and $\Gamma$ be a quasicircle separating
  $R$ into components $\riem_1$ and $\riem_2$.  Let $q \in R \backslash \Gamma$. If $h \in W_1$
  then
  \[  - \mathfrak{O}(\riem_2,\riem_1) J_q(\Gamma)_{\riem_2}   h  =   h - J_q(\Gamma)_{\riem_1}
      h.  \]
 \end{theorem}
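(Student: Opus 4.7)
The plan is to use the integral representation of $J_q(\Gamma) h$ to reduce the claim to an equality of CNT boundary values of a Cauchy-type potential across $\Gamma$, and then to prove that equality by density and continuity.

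Both sides of the identity lie in $\mathcal{D}_{\mathrm{harm}}(\riem_1)$, so by uniqueness of Dirichlet-bounded harmonic extensions of boundary data in $\mathcal{H}(\Gamma)$ it suffices to show that their CNT boundary values on $\Gamma$ coincide. Assume $q \in \riem_2$ for definiteness; the case $q \in \riem_1$ adds only an extra constant $h(q)$ on each side, which cancels. Introduce
\[
I(z) := -\frac{1}{\pi i}\iint_{\riem_1}\partial_w g(w;z,q)\wedge\overline{\partial} h(w),\qquad z\in R\setminus\{q\}.
\]
Equations \eqref{eq:J_double_integral_in} and \eqref{eq:J_double_integral_out} then give $J_q(\Gamma)_{\riem_1}h = h + I|_{\riem_1}$ on $\riem_1$ and $J_q(\Gamma)_{\riem_2}h = I|_{\riem_2}$ on $\riem_2$. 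Substituting into the identity, it becomes $\mathfrak{O}(\riem_2,\riem_1)(I|_{\riem_2}) = I|_{\riem_1}$, which by the definition of $\mathfrak{O}$ says precisely that the CNT boundary values of $I$ on $\Gamma$ agree from the two sides.

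To establish this matching, I pass to a dense class of $h \in W_1$ on which the claim is immediate, and then use continuity. For admissible $h$ whose antiholomorphic derivative $\overline{\partial} h$ is smooth and supported away from a neighbourhood of $\Gamma$---which one obtains by cutting off $\overline{\partial} h$ and orthogonally projecting back onto $\overline{A(R)}^\perp \subset \overline{A(\riem_1)}$ to preserve the admissibility condition---the kernel $\partial_w g(w;z,q)$ has no singularity for $w \in \mathrm{supp}\,\overline{\partial} h$ as $z$ ranges in a neighbourhood of $\Gamma$. Consequently $I$ extends smoothly (in fact harmonically) across $\Gamma$ there, and its CNT boundary values from the two sides trivially coincide. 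Since $J_q(\Gamma)_{\riem_k}$ is bounded by Corollary \ref{co:boundedness_and_holomorphicity}, $\mathfrak{O}(\riem_2,\riem_1)$ is bounded by Theorem \ref{th:transmission_bounded}, and Lemma \ref{th:uniform_controlled_by_L^2} controls pointwise values on compacta by Dirichlet norms, the identity extends by continuity from this dense subspace to all of $W_1$.

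The principal obstacle is the density step. A naive cutoff of $\overline{\partial} h$ destroys both its antiholomorphicity and the admissibility condition $\overline{\partial} h \in \overline{A(R)}^\perp$ characterising $W_1$; restoring admissibility via orthogonal projection requires verifying that the projected form is the $\overline{\partial}$ of some $h_n \in \mathcal{D}_{\mathrm{harm}}(\riem_1)$ with $h_n \to h$ in the Dirichlet seminorm. This rests on the orthogonal decomposition of $\overline{A(\riem_1)}$ and the isometry between antiholomorphic Dirichlet functions vanishing at a base point and their $\overline{\partial}$-forms; once this density is in place, the remainder of the argument is routine continuity.
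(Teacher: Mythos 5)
Your reduction is correct and, in spirit, matches the paper: writing $I(z)=-\tfrac{1}{\pi i}\iint_{\riem_1}\partial_w g(w;z,q)\wedge\overline{\partial}h(w)$, equations (\ref{eq:J_double_integral_out}) and (\ref{eq:J_double_integral_in}) do reduce the theorem to the statement that the CNT boundary values of $I$ from $\riem_2$ and from $\riem_1$ agree, and the paper's proof is likewise ``verify this on a dense subclass where the matching is visible, then extend by boundedness of $J_q(\Gamma)$, $\mathfrak{O}$ and the collar extension operator.'' The gap is in your choice of dense subclass: it is empty. For $h\in\mathcal{D}_{\mathrm{harm}}(\riem_1)$ the form $\overline{\partial}h$ is \emph{anti-holomorphic} on the connected surface $\riem_1$, so if it vanishes on an open neighbourhood of $\Gamma$ it vanishes identically; the only functions in your class are the holomorphic ones, which are certainly not dense in $W_1$ (they are Dirichlet-orthogonal to all non-constant anti-holomorphic elements). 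Your proposed repair is circular: cutting off $\overline{\partial}h$ destroys anti-holomorphicity, and projecting back onto $V_1\subset\overline{A(\riem_1)}$ restores anti-holomorphicity but thereby destroys the compact support away from $\Gamma$, which was the only reason the kernel $\partial_w g(w;z,q)$ was nonsingular for $z$ near $\Gamma$. So the step on which the whole argument rests --- that $I$ extends harmonically across $\Gamma$ for a dense set of admissible $h$ --- cannot be obtained this way. A further warning sign is that your boundary-matching argument for the ``special'' $h$ never uses $h\in W_1$ at all; in positive genus the constraint is genuinely needed for the two-sided matching, so any argument that ignores it must fail somewhere.

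For comparison, the paper's dense class is of a different nature: it consists of $\mathfrak{G}h$ for $h\in\mathcal{D}(A)$ holomorphic on a collar neighbourhood $A$ of $\Gamma$ in $\riem_1$ and satisfying $\int_{\Gamma'}h\alpha=0$ for all $\alpha\in A(R)$ (Lemma \ref{Lem:Prametrix lemma}). For such $h$, Royden's theorem produces holomorphic $H_1$ on $\riem_1$ and $H_2$ extending holomorphically to $\mathrm{cl}\,\riem_2\cup A$ with $H_1-H_2=h$ on $A$, and Theorem \ref{th:integral_in_DofA_equal} identifies $H_k$ with $J_q(\Gamma)_{\riem_k}\mathfrak{G}h$; the transmission identity then follows because $H_2$ is literally harmonic across $\Gamma$, which is the analogue of the smooth-extension step you wanted. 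Density of this class in $W_1$ is then proved quantitatively, by projecting onto the kernel of the period functionals $Q$ and using $Q_1(\mathfrak{G}u)=Q(u)$ together with Theorem \ref{th:Dirichlet_annulus_dense}. If you want to keep your outline, you must replace your cutoff class with something of this kind --- data that extend analytically (or at least harmonically) across $\Gamma$ and satisfy the period constraints --- rather than data whose $\overline{\partial}$ vanishes near $\Gamma$.
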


  To prove this theorem we need a lemma.
 \begin{lemma}\label{Lem:Prametrix lemma}  Let $\Gamma$ be a quasicircle and let $A$ be a collar neighbourhood of $\Gamma$ in $\riem_1$.
  Fix a smooth curve $\Gamma'$ in $A$ homotopic to $\Gamma$, and assume that $h \in \mathcal{D}(A)$ satisfies
  \begin{equation} \label{eq:DofA_jump_condition}
   \int_{\Gamma'} h \alpha =0
  \end{equation}
  for all $\alpha \in A(R)$.  Then $\mathfrak{G}  h \in W_1$ and
  \[  - \mathfrak{O}(\riem_2,\riem_1) J_q(\Gamma)_{\riem_2} \mathfrak{G} h  = \mathfrak{G} h - J_q(\Gamma)_{\riem_1}
    \mathfrak{G}   h.  \]
 \end{lemma}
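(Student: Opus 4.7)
My plan is to introduce the auxiliary function
\[ \Psi(z) := -\frac{1}{\pi i}\int_{\Gamma'} \partial_w g(w,w_0;z,q)\, h(w), \qquad z\in R\setminus\Gamma', \]
exploit the algebraic hypothesis on $h$ to show $\Psi$ is holomorphic on $R\setminus\Gamma'$ (and in particular smooth across $\Gamma$), recover $J_q(\Gamma)\mathfrak{G}h$ piecewise in terms of $\Psi$ by contour deformation, and finally match CNT boundary values on $\Gamma$ and invoke uniqueness of Dirichlet-bounded harmonic extensions. The membership $\mathfrak{G}h\in W_1$ is disposed of first: for any $\alpha\in A(R)$, Theorem \ref{th:general_annulus_holo_integral_same} identifies the limiting integral of $\mathfrak{G}h$ against $\alpha$ with the corresponding one for $h$, and since $h\alpha$ is holomorphic on $A$ one can deform $\Gamma^{p_1}_\epsilon$ to $\Gamma'$ inside $A$, obtaining $\int_{\Gamma'}h\alpha=0$ by hypothesis.

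The core step is to show $\Psi$ is holomorphic on $R\setminus\Gamma'$. Differentiating under the integral sign (legitimate because $\Gamma'$ is a smooth compact curve and $z$ stays off it) and using the identity
\[ \overline{\partial}_z\partial_w g(w,w_0;z,q) = \pi i\,\overline{K_R(z,w)}, \]
which comes from conjugating $K_R=-\tfrac{1}{\pi i}\partial_z\overline{\partial}_w g$ together with the reality of $g$, one finds
\[ \overline{\partial}_z\Psi(z) = -\int_{\Gamma'}\overline{K_R(z,w)}\, h(w). \]
For each fixed $z\in R$ the form $\overline{K_R(z,\cdot)}$ is holomorphic in $w$ on all of $R$ and square integrable, hence lies in $A(R)$; the hypothesis therefore forces $\overline{\partial}_z\Psi(z)=0$. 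Since $\Gamma\cap\Gamma'=\emptyset$, this makes $\Psi$ holomorphic on a neighbourhood of $\Gamma$ in $R$, so its boundary values $\Psi|_\Gamma$ are unambiguous from either side.

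Next I would recover $J_q(\Gamma)\mathfrak{G}h$ from $\Psi$. By Theorem \ref{th:integral_in_DofA_equal}, $J_q(\Gamma)\mathfrak{G}h(z)=-\tfrac{1}{\pi i}\lim_{\epsilon\to 0}\int_{\Gamma^{p_1}_\epsilon}\partial_w g(w;z,q)h(w)$, and for small $\epsilon$ both $\Gamma^{p_1}_\epsilon$ and $\Gamma'$ lie in $A$. Taking $q\in\riem_2$ (the case $q\in\riem_1$ adds only the constant $h(q)$ and requires no modification), the integrand is holomorphic in $w$ on $A\setminus\{z\}$. Deforming $\Gamma^{p_1}_\epsilon$ to $\Gamma'$ across the thin strip between $\Gamma'$ and $\Gamma$ picks up the residue $h(z)$ at $w=z$ precisely when $z$ lies in that strip, and none otherwise. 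This yields $J_q(\Gamma)\mathfrak{G}h=\Psi$ on $\riem_2$ and on the component of $\riem_1\setminus\Gamma'$ farther from $\Gamma$, and $J_q(\Gamma)\mathfrak{G}h=\Psi+h$ on the strip adjacent to $\Gamma$. Both sides of the claimed identity lie in $\mathcal{D}_{\mathrm{harm}}(\riem_1)$ by Corollary \ref{co:boundedness_and_holomorphicity} and Theorem \ref{th:transmission_bounded}; comparing CNT boundary values on $\Gamma$, the left side gives $-\Psi|_\Gamma$ (since $\mathfrak{O}$ preserves CNT boundary values and $J_q(\Gamma)_{\riem_2}\mathfrak{G}h|_\Gamma=\Psi|_\Gamma$), while the right side gives $h|_\Gamma-(\Psi+h)|_\Gamma=-\Psi|_\Gamma$. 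Uniqueness of the Dirichlet-bounded harmonic extension with prescribed CNT boundary values (cf.\ Theorem \ref{th:transmission_equivalences}) then delivers the identity. The main obstacle is the careful tracking of orientations in the residue calculation and the verification that the piecewise pointwise formula for $J_q(\Gamma)\mathfrak{G}h$ descends to the correct CNT boundary values on $\Gamma$; this is where the Dirichlet regularity of $h$ on $A$ and the smoothness of $\Psi$ across $\Gamma$ combine.
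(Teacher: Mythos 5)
Your argument is correct, and it reaches the conclusion by a genuinely different route at the central step. Where the paper simply invokes Royden's decomposition theorem (Theorem 4 of \cite{Royden}) to produce holomorphic functions $H_1$ on $\riem_1$ and $H_2$ on $\mathrm{cl}\,\riem_2 \cup A$ with $H_1-H_2=h$ on $A$, given by the restrictions of $J_q(\Gamma)'h$, you reprove exactly this decomposition by hand: you introduce $\Psi(z)=-\tfrac{1}{\pi i}\int_{\Gamma'}\partial_w g(w,w_0;z,q)h(w)$, use the correct identity $\overline{\partial}_z\partial_w g=\pi i\,\overline{K_R(z,w)}$ together with the hypothesis $\int_{\Gamma'}h\alpha=0$ for all $\alpha\in A(R)$ (noting $\overline{K_R(z,\cdot)}\in A(R)$ since $R$ is compact and $K_R$ is nonsingular) to kill $\overline{\partial}_z\Psi$, and then recover $J_q(\Gamma)'h$ as $\Psi$ off the strip and $\Psi+h$ on the strip by deforming $\Gamma^{p_1}_\epsilon$ to $\Gamma'$ and computing the residue at $w=z$. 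The remaining scaffolding is the same as the paper's: $W_1$-membership via Theorem \ref{th:general_annulus_holo_integral_same}, the replacement of $J_q(\Gamma)\mathfrak{G}h$ by $J_q(\Gamma)'h$ via Theorem \ref{th:integral_in_DofA_equal}, and the conclusion by matching CNT boundary values on $\Gamma$ and invoking the uniqueness built into the definitions of $\mathfrak{G}$ and $\mathfrak{O}(\riem_2,\riem_1)$ (note the uniqueness statement you want is the harmonic one implicit in those definitions, rather than Theorem \ref{th:transmission_equivalences} itself). What your route buys is self-containment and transparency: it makes explicit that the algebraic conditions against $A(R)$ are precisely what forces both pieces of the jump to be holomorphic, at the cost of the bookkeeping (orientations, the location of $q$, the jump of $\Psi$ across $\Gamma'$) that the citation to Royden absorbs; the paper's proof is shorter but leans on an external decomposition theorem.
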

  \begin{proof}  The fact that $\mathfrak{G} h \in W_1$ follows immediately from
  Theorem \ref{th:general_annulus_holo_integral_same}.
   By Royden \cite[Theorem 4]{Royden} and the explicit formula on the following page, there
   are holomorphic functions $H_1$ on $\riem_1$ and $H_2$ on $\text{cl}\, \riem_2 \cup A$ such that
   $H_1 - H_2 = h$ on $A$.  Furthermore, these functions are given by
    the restrictions of $J_q(\Gamma)' h$ to $\riem_1$ and $\riem_2$.  Thus, by Proposition \ref{th:integral_in_DofA_equal}, we have that
    \begin{equation} \label{eq:Hi_are_what_we_need}
     H_k = \left. J_q(\Gamma) \right|_{\riem_k} \mathfrak{G} h
    \end{equation}
    for $k=1,2$ (where $H_2$ has a holomorphic extension to
    $\text{cl}\riem_2 \cup A$).

    Since $H_1$, $H_2$ and $h$ are all in $\mathcal{D}(A)$, they have conformally
    non-tangential boundary values in $\mathcal{H}(\Gamma)$ with respect to $\riem_1$.  Since $H_1 - H_2 = h$
    on $A$, then the boundary values also satisfy this equation.  Thus
    \[ H_1 - \mathfrak{O}(\riem_2,\riem_1) H_2 = \mathfrak{G} h \]
    by definition of $\mathfrak{G}$ and $\mathfrak{O}(\riem_2,\riem_1)$.  Finally
    equation (\ref{eq:Hi_are_what_we_need}) completes the proof.
  \end{proof}
   We continue with the proof of Theorem \ref{th:reflection_special_case}.
  \begin{proof}
   Let $E$ be the linear subspace of $\mathcal{D}(A)$ consisting of those elements of $\mathcal{D}(A)$
   for which (\ref{eq:DofA_jump_condition}) is satisfied.  It suffices to show that $\mathfrak{G} E$ is
   dense in $W_1$.

   Fix a basis $\alpha_1,\ldots \alpha_g$ for $A(R)$.   Let $\mathcal{P}:\mathcal{D}(A) \rightarrow E$
   denote the orthogonal projection in $\mathcal{D}(A)$.

   For $u \in \mathcal{D}(A)$ define
   \[ Q(u) = \left( \int_{\Gamma'}u \alpha_1, \ldots, \int_{\Gamma'} u \alpha_g \right).   \]
   By Lemma \ref{th:uniform_controlled_by_L^2}  and the fact that $Q(u + c) = Q(u)$ for
   any constant $c$, it follows that
   each component of $Q$ is a bounded linear functional on  $\mathcal{D}(A)$.
   Once again, a simple argument based on Riesz representation theorem and the Gram-Schmidt process yields that there is a $C$ such that
   \begin{equation} \label{eq:plausiblemaybe}
     \| \mathcal{P} u - u  \|_{\mathcal{D}(A)} \leq C \| Q(u) \|_{\mathbb{C}^g}.
   \end{equation}


   For $H \in \mathcal{D}(\riem_1)_{\text{harm}}$ define now
   \[ Q_1(H) = \lim_{\epsilon \searrow 0}
    \left( \int_{\Gamma^{p_1}_\epsilon}H \alpha_1, \ldots, \int_{\Gamma^{p_1}_\epsilon} H \alpha_g \right). \]
    We have that there is a $C'$ such that
    \[  \| Q_1(H) \|_{\mathbb{C}^g} \leq C' \| H \|_{\mathcal{D}_{\text{harm}}(\riem_1)}.   \]
    This follows by applying Stokes' theorem to each component:
    \[   \lim_{\epsilon \searrow 0} \int_{\Gamma^{p_1}_\epsilon} H \alpha_k =
      \iint_{\riem_1}   \overline{\partial} H \wedge \alpha_k   \]
    which is proportional to $(\overline{\partial} H, \overline{\alpha_k})_{A(\riem_1)}$.
    Observe also that $Q_1( \mathfrak{G} u) = Q(u)$ for all $u \in \mathcal{D}(A)$ by Proposition
    \ref{th:integral_in_DofA_equal}.

      Let $h \in W_1 \subseteq \mathcal{D}_{\text{harm}}(\riem_1)$ be arbitrary.
   By density of $\mathfrak{G} \mathcal{D}(A)$, there is a $u \in \mathcal{D}(A)$ such
   that
   \[  \| \mathfrak{G} u - h \|_{\mathcal{D}_{\text{harm}}(\riem_1)}< \varepsilon.  \]
   We then have
   \begin{align*}
      \| \mathfrak{G} \mathcal{P} u - h \|_{\mathcal{D}_{\text{harm}}(\riem_1)}
      & \leq \| \mathfrak{G} \mathcal{P} u - \mathfrak{G} u \|_{\mathcal{D}_{\text{harm}}(\riem_1)} +
      \| \mathfrak{G} u - h \|_{\mathcal{D}_{\text{harm}}(\riem_1)} \\
    & \leq \| \mathfrak{G} \|  \| \mathcal{P} u - u \|_{\mathcal{D}(A)}
     + \| \mathfrak{G} u - h \|_{\mathcal{D}_{\text{harm}}(\riem_1)}.
   \end{align*}

   Now
   \[  \| Q(u) \| = \| Q_1(\mathfrak{G} u) \| = \| Q_1(\mathfrak{G} u - h) \|
    \leq C' \| \mathfrak{G} u - h \| < C' \varepsilon \]
   so by (\ref{eq:plausiblemaybe})
   \[  \| \mathcal{P} u - u \|_{\mathcal{D}(A)} \leq C C' \varepsilon.  \]
   Thus
   \[  \| \mathfrak{G} \mathcal{P} u - h \|_{\mathcal{D}_{\text{harm}}(\riem_1)}
     \leq (C C'\| \mathfrak{G} \|   +1 ) \varepsilon.  \]
  \end{proof}

 We also define a transmission operator for exact one-forms as follows:
\begin{definition}\label{defn: transmission of exact}
 For an
 exact one-form $\alpha \in A_e(\riem_2)_{\text{harm}}$ let $h_2$ be a harmonic function on $\riem_2$ such that $dh_2= \alpha$.  Let $h_1$ be the unique element of
 $\mathcal{D}(\riem_1)_{\text{harm}}$ with boundary values agreeing with $h_2$.
 Then we define
 \begin{align*}
  \mathfrak{O}_e(\riem_2,\riem_1):A_e(\riem_2)_{\text{harm}} & \rightarrow
   A_e(\riem_1)_{\text{harm}} \\
   \alpha & \mapsto d h_1.
 \end{align*}
 The transmission from $A_e(\riem_1)_{\text{harm}}$ to $A_e(\riem_2)_{\text{harm}}$
 is defined similarly.
 \end{definition}

 To prove the transmission formula for $\mathfrak{O}_e$, we require the following
 elementary lemma.
 \begin{lemma} \label{le:dbar_solvability}
  Let $\riem$ be a Riemann surface of finite genus $g$ bordered by
  a curve homeomorphic to a circle.  Let $\overline{\alpha} \in \overline{A(\riem)}$.
  There is an $h \in \mathcal{D}_{\mathrm{harm}}(\riem)$ such that $\overline{\partial} h =
  \overline{\alpha}$.  If $\tilde{h} \in \mathcal{D}_{\mathrm{harm}}(\riem)$ is
  any other such function, then $\tilde{h} - h \in \mathcal{D}(\riem)$.
 \end{lemma}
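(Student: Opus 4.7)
The plan is to prove uniqueness directly from the $(1,0)/(0,1)$ decomposition of harmonic one-forms, and to establish existence by finding a holomorphic $L^2$ correction $\beta \in A(\riem)$ whose periods on $H_1(\riem;\mathbb{Z})$ cancel those of $\overline{\alpha}$, so that $\beta + \overline{\alpha}$ becomes globally exact and $h$ can be defined by path-integration.

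For uniqueness, if $h,\tilde{h} \in \mathcal{D}_{\mathrm{harm}}(\riem)$ both satisfy $\overline{\partial} h = \overline{\partial}\tilde{h} = \overline{\alpha}$, then $u := \tilde{h}-h$ is harmonic with $\overline{\partial} u = 0$, hence holomorphic on $\riem$; since $du = \partial u \in A(\riem)$, this places $u \in \mathcal{D}(\riem)$. For existence, first note that $\overline{\alpha}$ is automatically $d$-closed ($\partial\overline{\alpha} = 0$ since its coefficient is antiholomorphic, and $\overline{\partial}\overline{\alpha} = 0$ for dimensional reasons on a one-complex-dimensional manifold). Since $\riem$ has finite genus $g$ with one boundary circle, $H_1(\riem;\mathbb{Z})$ has rank $2g$; fix a basis $\gamma_1,\ldots,\gamma_{2g}$ represented by cycles in a fixed compact subset of $\riem$, so that by Lemma \ref{th:uniform_controlled_by_L^2} the periods $p_j := \oint_{\gamma_j}\overline{\alpha}$ depend continuously on $\overline{\alpha}$. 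Assuming for the moment that the period map $P:A(\riem) \to \mathbb{C}^{2g}$, $\beta \mapsto \bigl(\oint_{\gamma_j}\beta\bigr)_j$, is surjective, I would pick $\beta \in A(\riem)$ with $P(\beta) = -(p_1,\ldots,p_{2g})$. Then $\omega := \beta+\overline{\alpha}$ is a closed $L^2$ one-form with all periods zero, hence $\omega = dh$ for a single-valued function $h:\riem \to \mathbb{C}$ obtained by integration from a base-point. By uniqueness of the decomposition $dh = \partial h + \overline{\partial} h$ into pure types, $\partial h = \beta$ (holomorphic) and $\overline{\partial} h = \overline{\alpha}$, so $h$ is harmonic; since $dh = \omega \in L^2(\riem)$, we conclude $h \in \mathcal{D}_{\mathrm{harm}}(\riem)$.

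The main obstacle is verifying surjectivity of $P$. My proposed approach is to form the Schottky double $R^d$, a compact Riemann surface of genus $2g$ equipped with an antiholomorphic involution $\sigma$ fixing $\partial\riem$ pointwise. Global holomorphic one-forms on $R^d$ restrict to elements of $A(\riem)$, but more importantly, meromorphic abelian differentials on $R^d$ with singularities confined to $\sigma(\riem)$ remain holomorphic and $L^2$ on $\riem$, since their poles are separated from $\riem$ by the boundary. Classical existence results for abelian differentials of the second and third kind on $R^d$, together with Legendre-type relations between their periods (verifiable directly in low genus, e.g.\ from the periods of $dz$ and $\wp\,dz$ when $\riem$ is a torus minus a disk), should provide enough such restrictions that their periods on $\gamma_1,\ldots,\gamma_{2g}$ span all of $\mathbb{C}^{2g}$. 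This is the technical heart of the argument and the step at which the hypothesis of finite genus is essential.
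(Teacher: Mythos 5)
Your uniqueness argument is exactly the paper's, and your overall existence strategy --- cancel the periods of $\overline{\alpha}$ over a homology basis of $\riem$ by a holomorphic $L^2$ correction $\beta$, so that $\overline{\alpha}+\beta$ is closed with vanishing periods, hence $=dh$, and then read off $\overline{\partial}h=\overline{\alpha}$ from the type decomposition --- is also the paper's. The problem is that the step you yourself call the technical heart, surjectivity of the period map $P\colon A(\riem)\rightarrow\mathbb{C}^{2g}$, is never established: you explicitly assume it, and the proposed route (abelian differentials of the second and third kind on the double with poles in $\sigma(\riem)$, ``Legendre-type relations,'' verification ``in low genus'') is not carried out and would not, as sketched, prove the span statement for arbitrary genus. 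Since the entire existence half of the lemma reduces to precisely this surjectivity, this is a genuine gap rather than a routine omission.

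The gap can be closed much more cheaply, and this is what the paper does: take $\beta$ to be the restriction to $\riem$ of a \emph{holomorphic} one-form on the compact double $R$ (such restrictions lie in $A(\riem)$ automatically, holomorphic forms on a compact surface being bounded). The double of a genus-$g$ surface with one boundary curve has genus $2g$, so $\dim A(R)=2g$, and the classical nondegeneracy of periods for doubles --- a holomorphic one-form on $R$ whose periods over a homology basis of $\riem$ all vanish is zero (Schiffer--Spencer) --- makes the map $A(R)\rightarrow\mathbb{C}^{2g}$, $\beta\mapsto\bigl(\int_{\gamma_k}\beta\bigr)_k$, an isomorphism; one then solves $\int_{\gamma_k}\beta=-\int_{\gamma_k}\overline{\alpha}$ directly. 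In particular the holomorphic differentials on the double, which you set aside as presumably insufficient, already do the whole job; no meromorphic differentials, Legendre relations, or low-genus computations are needed. With that substitution the rest of your argument goes through verbatim.
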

 \begin{proof}  Let $R$ be the double of $\riem$; so $A(R)$ has dimension $2g$ where $g$
 is the genus of $\riem$.
  Let $a_1,\ldots,a_{2g}$ be a collection of smooth curves which generate the fundamental group of $\riem$.   Let
  \[ c_k = \int_{a_k}  \overline{\alpha} \]
  for $k=1,\ldots,2g$.  Since $A(R)$ has dimension $2g$, there is a $\beta \in A(R)$
  such that
  \[ \int_{a_k} \beta = -c_k \]
  for $k=1,\ldots,2g$.  Thus $\overline{\alpha} + \beta$ is exact in $\riem$ and hence is equal to $dh$ for some $h \in \mathcal{D}_{\text{harm}}(\riem)$.  But clearly $\overline{\partial} h =\overline{\alpha}$.

  If $\tilde{h}$ is any other such function then $\overline{\partial} (\tilde{h} - h) =0$,
  which completes the proof.
 \end{proof}

 Recall that $\overline{A(R)}^\perp$ denotes the set of elements in $A_{\text{harm}}(\riem)$
 which are orthogonal to the restrictions of elements of $\overline{A(R)}$ with respect to $(\cdot,\cdot)_{A_{\text{harm}}}(\riem)$.

 \begin{definition}\label{def: V and Vprime}
  Given $R$ and $\riem_i$ as above, let
 \[  V_k = \overline{A(\riem_k)} \cap \overline{A(R)}^\perp,  \]
 and
 \[   V_k' =  \{ \overline{\alpha} + \beta \in A_{\text{harm}}(\riem_k)_e \,:\, \overline{\alpha} \in
     V_k\},   \]
     for $k=1,2$.
 \end{definition}
 \begin{theorem}  \label{th:transmission_formula_with_T}
  Let $R$ be a compact Riemann surface and let $\Gamma$ be a quasicircle separating $R$ into components $\riem_1$ and $\riem_2$. If $\overline{\alpha} \in V_1$ then
  \[ - \mathfrak{O}_e (\riem_2,\riem_1) T(\riem_1,\riem_2) \overline{\alpha}
     = \overline{\alpha} - T(\riem_1,\riem_1) \overline{\alpha}.  \]
 \end{theorem}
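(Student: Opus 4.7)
The plan is to reduce the identity for $\mathfrak{O}_e$ on one-forms to the already-established identity for $\mathfrak{O}$ on Dirichlet functions by choosing a suitable primitive of $\overline{\alpha}$. Given $\overline{\alpha} \in V_1$, Lemma \ref{le:dbar_solvability} (applied to $\riem_1$, which is bordered by a quasicircle, hence by a Jordan curve homeomorphic to $\mathbb{S}^1$) yields an $h \in \mathcal{D}_{\text{harm}}(\riem_1)$ with $\overline{\partial} h = \overline{\alpha}$. The first task is then to show that this $h$ actually lies in the admissible subspace $W_1$ of Definition \ref{def: W}. For any $\beta \in A(R)$, Stokes' theorem applied to the regions bounded by $\Gamma^{p_1}_\epsilon$ (and the fact that $\beta$ is holomorphic, so $d\beta=0$ and $\partial h \wedge \beta = 0$ for type reasons) gives
\[
\lim_{\epsilon \searrow 0} \int_{\Gamma^{p_1}_\epsilon} h(w)\, \beta(w) \;=\; \iint_{\riem_1} \overline{\partial} h \wedge \beta \;=\; \iint_{\riem_1} \overline{\alpha} \wedge \beta,
\]
and a short computation in local coordinates identifies the right-hand side with a nonzero constant times the inner product $(\overline{\alpha}, \overline{\beta})_{A_{\text{harm}}(\riem_1)}$. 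Since $\overline{\alpha} \in V_1 = \overline{A(\riem_1)} \cap \overline{A(R)}^\perp$, this pairing vanishes, and therefore $h \in W_1$.

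With $h \in W_1$ in hand, Theorem \ref{th:reflection_special_case} gives
\[
-\mathfrak{O}(\riem_2,\riem_1)\, J_q(\Gamma)_{\riem_2} h \;=\; h - J_q(\Gamma)_{\riem_1} h.
\]
Moreover, since $\overline{\partial} h = \overline{\alpha} \in \overline{A(R)}^\perp$, Corollary \ref{co:boundedness_and_holomorphicity}(2) guarantees that $J_q(\Gamma) h \in \mathcal{D}(\riem_1 \cup \riem_2)$, that is, $J_q(\Gamma)_{\riem_k} h$ is holomorphic on $\riem_k$ for $k=1,2$. In particular $T(\riem_1,\riem_2)\overline{\alpha} = \partial J_q(\Gamma)_{\riem_2} h = d\bigl(J_q(\Gamma)_{\riem_2} h\bigr)$ is exact on $\riem_2$, so that $\mathfrak{O}_e(\riem_2,\riem_1)$ may legitimately be applied to it.

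The next step is to differentiate the transmission identity. By Definition \ref{defn: transmission of exact}, applied with $h_2 = J_q(\Gamma)_{\riem_2} h$ and $h_1 = \mathfrak{O}(\riem_2,\riem_1) h_2$,
\[
\mathfrak{O}_e(\riem_2,\riem_1)\, T(\riem_1,\riem_2)\overline{\alpha} \;=\; d\bigl[\mathfrak{O}(\riem_2,\riem_1)\, J_q(\Gamma)_{\riem_2} h\bigr].
\]
Applying $d$ to the identity from Theorem \ref{th:reflection_special_case} and invoking Theorem \ref{th:jump_derivatives} — which gives $d J_q(\Gamma)_{\riem_1} h = \partial h + T(\riem_1,\riem_1)\overline{\alpha}$ (since $J_q(\Gamma)_{\riem_1} h$ is holomorphic) and $dh = \partial h + \overline{\alpha}$ — yields
\[
-\mathfrak{O}_e(\riem_2,\riem_1)\, T(\riem_1,\riem_2)\overline{\alpha} \;=\; dh - dJ_q(\Gamma)_{\riem_1} h \;=\; \overline{\alpha} - T(\riem_1,\riem_1)\overline{\alpha},
\]
which is the claim.

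The main obstacle is the verification that $h \in W_1$: one must correctly match the orientation and sign conventions so that Stokes' theorem on the region enclosed by $\Gamma^{p_1}_\epsilon$ really converts the limiting boundary integral into $\iint_{\riem_1} \overline{\alpha} \wedge \beta$, and then recognize this wedge product as (a multiple of) the Hilbert-space pairing that defines $V_1$. A secondary subtlety is the identification $\mathfrak{O}_e \circ T(\riem_1,\riem_2) = d \circ \mathfrak{O} \circ J_q(\Gamma)_{\riem_2}$, which requires the holomorphicity statement from Corollary \ref{co:boundedness_and_holomorphicity}(2) to realize $T(\riem_1,\riem_2)\overline{\alpha}$ as an exact form with an explicit primitive; without this step, $\mathfrak{O}_e$ is not even defined on the argument.
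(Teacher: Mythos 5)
Your proof is correct and follows essentially the same route as the paper: solve $\overline{\partial} h = \overline{\alpha}$ via Lemma \ref{le:dbar_solvability}, apply Theorem \ref{th:reflection_special_case}, use holomorphicity of $J_q(\Gamma)h$ coming from $\overline{\alpha}\in\overline{A(R)}^\perp$, then differentiate and invoke Theorem \ref{th:jump_derivatives}. Your explicit Stokes-theorem verification that $h\in W_1$ is a detail the paper leaves implicit, but it matches the computation used there in the proof of Theorem \ref{th:reflection_special_case}.
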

 \begin{proof}
  Let $\overline{\alpha} \in V_1$, then  by
  Lemma \ref{le:dbar_solvability} there is an $h \in \mathcal{D}_{\text{harm}}(\riem_1)$ such
  that $\overline{\partial} h = \overline{\alpha}$.

  Since $\overline{\partial} h = \overline{\alpha} \in \overline{A(R)}^\perp$,
  $\overline{S}(\riem_1) \overline{\partial} h = 0$, so by Theorem \ref{th:jump_derivatives}
  $\overline{\partial} J(\Gamma)h =0$.

  Applying $d$ to both sides of (\ref{th:reflection_special_case}) and using this fact yields
  \begin{equation*}
   - \mathfrak{O}_e(\riem_2,\riem_1) {\partial} J(\Gamma)_{\riem_2} h
    = dh - \partial J(\Gamma)_{\riem_1} h.
  \end{equation*}
  The Theorem now follows from the remaining relations in Theorem \ref{th:jump_derivatives}.

 \end{proof}
 For $k=1,2$ denote by
 \begin{align*}
  P(\riem_k): A_{\text{harm}}(\riem_k) & \rightarrow A(\riem_k) \\
  \overline{P}(\riem_k):A_{\text{harm}}(\riem_k) & \rightarrow \overline{A(\riem_k)}
 \end{align*}
 the orthogonal projections onto the holomorphic and anti-holomorphic parts of a given harmonic one-form.
 \begin{corollary}  \label{co:Tonetwo_injectivity}
  Let $R$ be a compact Riemann surface and $\Gamma$ be a quasicircle separating $R$ into components $\riem_1$ and $\riem_2$.  Then $-\overline{P}(\riem_1) \mathfrak{O}(\riem_2,\riem_1)$ is a left inverse of $\left. T(\riem_1,\riem_2) \right|_{V_1}$. In particular, the restriction of $T(\riem_1,\riem_2)$ to $V_1$ is injective.
 \end{corollary}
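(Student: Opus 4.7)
The plan is to read this off from Theorem \ref{th:transmission_formula_with_T} by applying the orthogonal projection $\overline{P}(\riem_1)$ to both sides of the transmission identity and exploiting the bigrading of $A_{\text{harm}}(\riem_1) = A(\riem_1) \oplus \overline{A(\riem_1)}$.

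First I would fix $\overline{\alpha} \in V_1$ and quote Theorem \ref{th:transmission_formula_with_T} to get
\[ -\mathfrak{O}_e(\riem_2,\riem_1)\, T(\riem_1,\riem_2)\overline{\alpha} \;=\; \overline{\alpha} - T(\riem_1,\riem_1)\overline{\alpha}. \]
Then I would apply $\overline{P}(\riem_1)$ to both sides. On the right, $T(\riem_1,\riem_1)\overline{\alpha}$ is holomorphic on $\riem_1$ by Theorem \ref{th:T_boundedness}, so $\overline{P}(\riem_1) T(\riem_1,\riem_1)\overline{\alpha}=0$. Since $\overline{\alpha} \in V_1 \subseteq \overline{A(\riem_1)}$ is already anti-holomorphic, $\overline{P}(\riem_1)\overline{\alpha} = \overline{\alpha}$. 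Hence
\[ -\overline{P}(\riem_1)\,\mathfrak{O}_e(\riem_2,\riem_1)\, T(\riem_1,\riem_2)\overline{\alpha} \;=\; \overline{\alpha}, \]
which is exactly the assertion that $-\overline{P}(\riem_1)\mathfrak{O}(\riem_2,\riem_1)$ is a left inverse of $T(\riem_1,\riem_2)|_{V_1}$ (with $\mathfrak{O}$ read as the exact-form transmission $\mathfrak{O}_e$ on its image). Injectivity then follows at once from the existence of a left inverse.

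There is essentially no obstacle, since the analytic content has already been extracted in Theorem \ref{th:transmission_formula_with_T}; the only point requiring a brief justification is the well-posedness of $\mathfrak{O}_e(\riem_2,\riem_1)$ on $T(\riem_1,\riem_2)\overline{\alpha}$, i.e.\ that this one-form is exact on $\riem_2$. This is implicit in the proof of Theorem \ref{th:transmission_formula_with_T}: choosing $h \in \mathcal{D}_{\mathrm{harm}}(\riem_1)$ with $\overline{\partial} h = \overline{\alpha}$ (as in Lemma \ref{le:dbar_solvability}), the defining condition $\overline{\alpha} \in \overline{A(R)}^\perp$ gives $\overline{S}(\riem_1)\overline{\partial} h = 0$, so Theorem \ref{th:jump_derivatives} yields $\overline{\partial} J_q(\Gamma) h = 0$ on $\riem_2$, whence $T(\riem_1,\riem_2)\overline{\alpha} = \partial J_q(\Gamma) h = d J_q(\Gamma) h$ is indeed exact, and $\mathfrak{O}_e$ applies.
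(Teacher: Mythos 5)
Your proposal is correct and follows essentially the same route as the paper: the paper's proof likewise reads the left-inverse identity off Theorem \ref{th:transmission_formula_with_T}, using that $T(\riem_1,\riem_1)\overline{\alpha}$ and $T(\riem_1,\riem_2)\overline{\alpha}$ are holomorphic so that the anti-holomorphic projection isolates $\overline{\alpha}$. Your extra remark on why $T(\riem_1,\riem_2)\overline{\alpha}$ is exact (so that $\mathfrak{O}_e$ applies) is a sensible piece of bookkeeping already implicit in the proof of that theorem.
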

 \begin{proof}
  This follows immediately from Theorem \ref{th:transmission_formula_with_T} and the fact that for $\overline{\alpha} \in V_1$, $T(\riem_1,\riem_1) \overline{\alpha}$ and $T(\riem_1,\riem_2) \overline{\alpha}$ are holomorphic.
 \end{proof}

 As another consequence of Theorem \ref{th:transmission_formula_with_T} we are able to prove an inequality analogous
 to the strengthened Grunsky inequality for quasicircles \cite{Pommerenkebook}.

 \begin{theorem}  \label{th:Grunsky_souped_up} Let $R$ be a compact Riemann surface and $\Gamma$ be a quasicircle separating
  $R$ into disjoint components $\riem_1$ and $\riem_2$.  Then $\| \left. T(\riem_1,\riem_1) \right|_{V_1} \| <1$.
 \end{theorem}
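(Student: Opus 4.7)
The plan is to combine the Parseval-type identity of Theorem~\ref{th:two_kernels_adjoint_identity} with the left inverse provided by Corollary~\ref{co:Tonetwo_injectivity}. Pairing the operator identity of Theorem~\ref{th:two_kernels_adjoint_identity} with any $\overline{\alpha} \in \overline{A(\riem_1)}$ gives the ``energy decomposition''
\begin{equation*}
\|T(\riem_1,\riem_1)\overline{\alpha}\|^2 + \|T(\riem_1,\riem_2)\overline{\alpha}\|^2 + \|\overline{S}(\riem_1)\overline{\alpha}\|^2 = \|\overline{\alpha}\|^2.
\end{equation*}
The non-strict bound $\|T(\riem_1,\riem_1)|_{V_1}\| \leq 1$ is then immediate once the last two terms are controlled, and the whole game is to show that $T(\riem_1,\riem_2)\overline{\alpha}$ is bounded below by a positive multiple of $\|\overline{\alpha}\|$ on $V_1$.

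First I would reduce the identity on $V_1$. Taking complex conjugates of Theorem~\ref{th:Bergman_comparison_restriction}, $\overline{S}(\riem_1)$ is (up to conjugation) the adjoint of restriction $\overline{A(R)} \to \overline{A(\riem_1)}$. Its kernel is therefore the orthogonal complement in $\overline{A(\riem_1)}$ of the restrictions of antiholomorphic forms from $R$, which by Definition~\ref{def: V and Vprime} is exactly $V_1$. Hence for $\overline{\alpha} \in V_1$ we have $\overline{S}(\riem_1)\overline{\alpha}=0$ and the identity simplifies to
\begin{equation*}
\|T(\riem_1,\riem_1)\overline{\alpha}\|^2 + \|T(\riem_1,\riem_2)\overline{\alpha}\|^2 = \|\overline{\alpha}\|^2.
\end{equation*}

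Next I would invoke Corollary~\ref{co:Tonetwo_injectivity}, which gives, for every $\overline{\alpha}\in V_1$,
\begin{equation*}
 \overline{\alpha} = -\overline{P}(\riem_1)\,\mathfrak{O}_e(\riem_2,\riem_1)\, T(\riem_1,\riem_2)\overline{\alpha}.
\end{equation*}
The orthogonal projection $\overline{P}(\riem_1)$ has norm at most $1$, and $\mathfrak{O}_e(\riem_2,\riem_1)$ is bounded as an operator between $L^2$ spaces of exact one-forms because this is precisely the content of Theorem~\ref{th:transmission_bounded} in the Dirichlet semi-norm (which, on exact one-forms, is the $A_{\mathrm{harm}}$-norm of the differential). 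Setting $C = \|\overline{P}(\riem_1)\,\mathfrak{O}_e(\riem_2,\riem_1)\|<\infty$ we obtain the lower bound $\|T(\riem_1,\riem_2)\overline{\alpha}\|^2 \geq C^{-2}\|\overline{\alpha}\|^2$. Substituting this into the reduced energy identity yields
\begin{equation*}
  \|T(\riem_1,\riem_1)\overline{\alpha}\|^2 \leq (1-C^{-2})\,\|\overline{\alpha}\|^2,
\end{equation*}
and taking the supremum over unit vectors in $V_1$ gives $\|T(\riem_1,\riem_1)|_{V_1}\|^2 \leq 1-C^{-2}<1$.

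The main obstacle is the first reduction: cleanly identifying $V_1$ as the kernel of $\overline{S}(\riem_1)$. Once this is done, the strict inequality comes out quantitatively and rests squarely on the finiteness of the norm of the transmission operator, which is precisely where the quasicircle hypothesis enters through Theorem~\ref{th:transmission_bounded}.
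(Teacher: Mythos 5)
Your proposal is correct, and it proves the theorem by a simpler rearrangement of essentially the same ingredients, so a comparison is worthwhile. Both arguments rest on the identity of Theorem \ref{th:two_kernels_adjoint_identity}, on Theorem \ref{th:transmission_formula_with_T} (which you access only through its Corollary \ref{co:Tonetwo_injectivity}), and on the boundedness of transmission (Theorem \ref{th:transmission_bounded}); your identification of $V_1$ inside $\ker \overline{S}(\riem_1)$ via Theorem \ref{th:Bergman_comparison_restriction} is exactly how the paper discards the $\overline{S}(\riem_1)^*\overline{S}(\riem_1)$ term too (note only the inclusion $V_1 \subseteq \ker\overline{S}(\riem_1)$ is needed, which your adjoint argument gives; also the Corollary's $\mathfrak{O}(\riem_2,\riem_1)$ is indeed to be read as $\mathfrak{O}_e(\riem_2,\riem_1)$, as you do). The difference is in how the transmission bound is exploited. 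You use the left inverse $-\overline{P}(\riem_1)\mathfrak{O}_e(\riem_2,\riem_1)$ to get the coercivity estimate $\|\overline{\alpha}\| \leq \|\mathfrak{O}_e(\riem_2,\riem_1)\|\,\|T(\riem_1,\riem_2)\overline{\alpha}\|$ on $V_1$ and feed it into the energy identity, getting $\|T(\riem_1,\riem_1)|_{V_1}\|^2 \leq 1 - C^{-2}$. The paper instead inserts the full formula $-\mathfrak{O}_e(\riem_2,\riem_1)T(\riem_1,\riem_2)\overline{\alpha} = \overline{\alpha} - T(\riem_1,\riem_1)\overline{\alpha}$ into the transmission estimate written as $\tfrac{1+c}{1-c}$ with $c\in(0,1)$, works with real parts of primitives, and after a rotation trick ($\alpha \mapsto e^{i\theta}\alpha$) and the relation $T(\riem_1,\riem_1)^* = \overline{T}(\riem_1,\riem_1)$ arrives at $|\mathrm{Re}(\alpha, T(\riem_1,\riem_1)\overline{\alpha})| \leq c\|\overline{\alpha}\|^2$. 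Both yield the stated conclusion; what the paper's longer computation buys is the sharper, strengthened-Grunsky-type bound $\|T(\riem_1,\riem_1)|_{V_1}\| \leq c$ with $c$ tied directly to the transmission constant, whereas your constant $\sqrt{1-C^{-2}}$ (which works out to $\sqrt{2c/(1+c)} \geq c$) is generally weaker. What your route buys is brevity: no real-part bookkeeping and no polarization step, with the quasicircle hypothesis entering, as in the paper, solely through the boundedness of $\mathfrak{O}$. The degenerate case $V_1=\{0\}$, where $1-C^{-2}$ could be negative, is trivial and causes no harm.
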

 \begin{proof}
  Since $d:\mathcal{D}_{\text{harm}}(\riem_k) \rightarrow A_{\text{harm}}(\riem_k)$ is norm-preserving (with respect to the Dirichlet semi-norm), it follows from Theorem \ref{th:transmission_bounded} that there is
 a $c \in (0,1)$ which is independent of $\overline{\alpha}$ such that
 \begin{equation}\label{estim: boundedness of action of O}
 \| \mathfrak{O}_e (\riem_2,\riem_1) \,  T(\riem_1,\riem_2)\overline{\alpha} \|^2 \leq
   \frac{1+c}{1-c} \, \| T(\riem_1,\riem_2) \overline{\alpha} \|^2.
   \end{equation}
   We will insert the identity
  \begin{equation} \label{eq:isomorphism_proof_temphalf}
   -\mathfrak{O}_e (\riem_2,\riem_1) T(\riem_1,\riem_2) \overline{\alpha} = \overline{\alpha}   - T(\riem_1,\riem_1) \overline{\alpha}
  \end{equation}
  of Theorem \ref{th:transmission_formula_with_T} into (\ref{estim: boundedness of action of O}).

In the following computation, we need two observations.  First, if a function $H$ is
  holomorphic on a domain $\Omega$, then $\| H \|^2_{\mathcal{D}_{\text{harm}}}(\Omega) = 2\| \text{Re} (H) \|^2_{\mathcal{D}(\Omega)}$.  Second, if $H_2$ is a primitive of $T(\riem_1,\riem_2) \overline{\alpha}$ and if we let $H_1 = \mathfrak{O}(\riem_2,\riem_1) H_2$ (so that $H_1$ is a primitive of $\overline{\alpha} - T(\riem_1,\riem_1) \overline{\alpha}$ by definition), then we observe that $\mathfrak{O}(\riem_2,\riem_1)
  \text{Re}(H_2) = \text{Re}(H_1)$, and therefore the boundedness of transmission estimate applies to $\text{Re}(H_i)$.

  Since $\alpha - T(\riem_1,\riem_1) \overline{\alpha}$ has the same real part as the right hand side of (\ref{eq:isomorphism_proof_temphalf}), combining with (\ref{estim: boundedness of action of O}) (applied to the real part of the primitives) we obtain
  \begin{align} \label{eq:isomorphism_proof_temp2}
    \frac{1+c}{1-c} \, \| T(\riem_1,\riem_2) \overline{\alpha} \|^2
     & = \frac{1 + c}{1-c} \, 2 \| \text{Re}( H_2) \|^2_{\mathcal{D}_{\text{harm}}(\riem_2)}
     \nonumber \\
     & \geq  2 \| \text{Re} (H_1 )\|^2_{\mathcal{D}_{\text{harm}}(\riem_1)}  \nonumber  \\
     & = 2 \| d \,\text{Re} (H_1) \|^2_{A_{\text{harm}}}(\riem_1)
     = 2 \| \text{Re} (d H_1) \|^2_{A_{\text{harm}(\riem_1)}}  \nonumber \\
     & = 2  \| \text{Re} \left( \overline{\alpha} - T(\riem_1,\riem_1) \overline{\alpha}\right)  \|^2
     = \nonumber
     2 \| \text{Re} \left( \alpha - T(\riem_1,\riem_1) \overline{\alpha}\right)  \|^2 \\
     & = \| \overline{\alpha} \|^2 - 2 \text{Re}\,(T(\riem_1,\riem_1) \overline{\alpha},
      \alpha) + \| T(\riem_1,\riem_1) \overline{\alpha}\|^2.
  \end{align}
  where we have used the fact that $\alpha - T(\riem_1,\riem_1) \overline{\alpha}$ is
  holomorphic.
 By Theorem \ref{th:two_kernels_adjoint_identity} we have that
  \begin{align*}
  \| \overline{\alpha}\|^2  = (\overline{\alpha},\overline{\alpha})
  & = \left( \overline{\alpha},   T(\riem_1,\riem_1)^* T(1,1) \overline{\alpha} + T(\riem_1,\riem_2)^* T(\riem_1,\riem_2) \overline{\alpha}
   \right) \\
  & = \left( \overline{\alpha}, T(\riem_1,\riem_1)^* T(\riem_1,\riem_1) \overline{\alpha} \right)
    + \left( T(\riem_1,\riem_2)^* T(\riem_1,\riem_2) \overline{\alpha}
   \right) \\
   & = \| T(\riem_1,\riem_1) \overline{\alpha} \|^2 + \| T(\riem_1,\riem_2) \overline{\alpha} \|^2.
 \end{align*}

 Combining this with (\ref{eq:isomorphism_proof_temp2})  yields

  \[  - \frac{1-c}{1+c} \text{Re}(\alpha, T(\riem_1,\riem_1) \overline{\alpha})
     \leq \frac{c}{1+c} \| \overline{\alpha}\|^2    - \frac{1}{1+c}
      \| T(\riem_1,\riem_1) \overline{\alpha} \|^2.  \]
Hence
  \begin{align*}
   -\text{Re} (\alpha, T(\riem_1,\riem_1) \overline{\alpha}) & \leq \frac{c}{1+c}
   \| \overline{\alpha}\|^2 - \frac{2c}{1+c} \text{Re} (\alpha, T(\riem_1,\riem_1) \overline{\alpha}) - \frac{1}{1+c} \| T(\riem_1,\riem_1) \overline{\alpha}\|^2 \\
  & = c  \| \overline{\alpha}\|^2 - \frac{1}{1+c} \| T(\riem_1,\riem_1) \overline{\alpha} + c \alpha \|^2.
  \end{align*}
 Applying this to $e^{i\theta} \alpha$, we see that the same inequality holds with the
  left hand side replaced by $-e^{-2i\theta} \text{Re}(\alpha, T(\riem_1,\riem_1) \overline{\alpha})$
  for any $\theta$.
  So
  \[ |\text{Re} (\alpha, T(\riem_1,\riem_1)\overline{\alpha})| \leq c \| \overline{\alpha}\|^2. \]
  Together with the fact that $T(\riem_1,\riem_1)^*=\overline{T(\riem_1,\riem_1)}$
  this proves the theorem.

 \end{proof}

 \begin{remark}  This gives another proof that $T(\riem_1,\riem_2)$ is injective.  Let $\nu=\|T(\riem_1,\riem_1)\| <1$.  Observe that if $\overline{\alpha} \in \overline{A(\riem_1)}$ is in $V_1$, then since the kernel
of the operator $S(\Sigma_1)$ is holomorphic we have that $\overline{\alpha}
\in \text{Ker}\, \overline{S}(\riem_1)$.   Thus by Theorem \ref{th:two_kernels_adjoint_identity}
\begin{align*}
  \| T(\riem_1,\riem_2) \overline{\alpha} \|_{A(\riem_2)}^2 & = \| \overline{\alpha}\|^2_{\overline{A(\riem_1)}}
     -  \|  T(\riem_1,\riem_1) \overline{\alpha}   \|^2_{\overline{A(\riem_1)}}  \\
     & \geq (1-\nu^2) \| \overline{\alpha} \|^2_{\overline{A(\riem_1)}}.
\end{align*}
Since $1-\nu^2 >0$ this completes the proof.
\end{remark}

\end{subsection}
\begin{subsection}{Isomorphism theorem for the Schiffer operator}
 In this section,  we prove the isomorphism theorem for the Schiffer operators.
 We require two facts.

\begin{theorem} \label{th:isomorphism_theorem} Let $\Gamma$ be a quasicircle.  Then the restriction of $T(\riem_1,\riem_2)$
 to $V_1$ is an isomorphism onto $A(\riem_2)_e$.
\end{theorem}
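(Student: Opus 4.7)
I would split the isomorphism into three claims: (a) $T(\riem_1,\riem_2)V_1\subseteq A(\riem_2)_e$, (b) injectivity on $V_1$, and (c) surjectivity onto $A(\riem_2)_e$. Claim (b) is already known (Corollary~\ref{co:Tonetwo_injectivity} and the Remark after Theorem~\ref{th:Grunsky_souped_up}). For (a), given $\overline{\alpha}\in V_1$ pick, by Lemma~\ref{le:dbar_solvability}, some $h\in\mathcal{D}_{\mathrm{harm}}(\riem_1)$ with $\overline{\partial}h=\overline{\alpha}$. Because $\overline{\alpha}\in\overline{A(R)}^{\perp}$, Corollary~\ref{co:boundedness_and_holomorphicity}(2) yields $J_q(\Gamma)h\in\mathcal{D}(\riem_1\cup\riem_2)$, and Theorem~\ref{th:jump_derivatives} then identifies $T(\riem_1,\riem_2)\overline{\alpha} = \partial J_q(\Gamma)h|_{\riem_2} = d(J_q(\Gamma)h|_{\riem_2})$, an exact holomorphic one-form.

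For (c), given $\beta\in A(\riem_2)_e$ I would take a primitive $H_2\in\mathcal{D}(\riem_2)$ with $dH_2=\beta$, transmit to $H_1=\mathfrak{O}(\riem_2,\riem_1)H_2\in\mathcal{D}_{\mathrm{harm}}(\riem_1)$, and set $\overline{\alpha}=\overline{\partial}H_1\in\overline{A(\riem_1)}$ (an overall sign may be needed at the end). The first thing to verify is that $\overline{\alpha}\in V_1$, i.e., $\iint_{\riem_1}\overline{\partial}H_1\wedge\gamma=0$ for every $\gamma\in A(R)$. Since $\gamma$ is holomorphic, $d(H_1\gamma)=\overline{\partial}H_1\wedge\gamma$, so Stokes' theorem converts the integral to $\lim_\epsilon\int_{\Gamma_\epsilon^{p_1}}H_1\gamma$. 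By Theorem~\ref{th:holomorphic_sides_equal} (applied with the holomorphic one-form $\gamma$) this equals $\lim_\epsilon\int_{\Gamma_\epsilon^{p_2}}H_2\gamma$, and a second application of Stokes in $\riem_2$ reduces it to $\iint_{\riem_2}\overline{\partial}H_2\wedge\gamma=0$ because $H_2$ is holomorphic. Hence $\overline{\alpha}\in V_1$.

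Next I would compute $T(\riem_1,\riem_2)\overline{\alpha}=dJ_q(\Gamma)H_1|_{\riem_2}$ explicitly. Choose $q\in\riem_2$; since the Schiffer operator is independent of $q$ this is permissible. Applying Theorem~\ref{th:holomorphic_sides_equal} now to the holomorphic one-form $\partial_w g(\cdot;z,q)$ gives
\[
\lim_\epsilon\int_{\Gamma_\epsilon^{p_1}}\partial_w g(w;z,q)H_1(w) = \lim_\epsilon\int_{\Gamma_\epsilon^{p_2}}\partial_w g(w;z,q)H_2(w).
\]
The right-hand integrand is holomorphic in $w$ off the poles of $\partial_w g$ at $w=z$ and $w=q$, against the holomorphic function $H_2$. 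Applying Stokes on the subregion $\{g_{\riem_2}(\cdot,p_2)>\epsilon\}\subset\riem_2$ with small discs around $z$ and $q$ excised, and using the local expansions $\partial_w g\sim -dw/(2(w-z))$ near $z$ and $\partial_w g\sim dw/(2(w-q))$ near $q$, the sum of residues yields $\pm\pi i(H_2(z)-H_2(q))$. Hence $J_q(\Gamma)H_1|_{\riem_2}(z)=\pm(H_2(z)-H_2(q))$, so differentiating gives $T(\riem_1,\riem_2)\overline{\alpha}=\pm dH_2=\pm\beta$; replacing $\overline{\alpha}$ by $-\overline{\alpha}$ if necessary produces the required preimage.

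The main obstacle is the orientation bookkeeping in the last residue calculation. The two families $\Gamma_\epsilon^{p_1}$ and $\Gamma_\epsilon^{p_2}$ lie on opposite sides of $\Gamma$, and the orientation of $\Gamma_\epsilon^{p_2}$ that is compatible with Theorem~\ref{th:holomorphic_sides_equal} (positive with respect to $\riem_1$) is opposite to the natural positive boundary orientation of $\{g_{\riem_2}(\cdot,p_2)>\epsilon\}$ as a subregion of the compact surface $R$. In particular, on the compactified $\riem_2$ one must correctly identify which circles are ``outer'' and which are ``inner'' when Stokes is applied. Once these signs are settled, the identity $J_q(\Gamma)H_1|_{\riem_2}=\pm(H_2-H_2(q))$ follows and establishes surjectivity.
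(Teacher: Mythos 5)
Your proposal is correct and follows essentially the same route as the paper: injectivity via Corollary \ref{co:Tonetwo_injectivity}, exactness of the image from the vanishing of the $\overline{\partial}_z$-part of the potential, and surjectivity by transmitting a primitive $H_2$ of $\beta$ to $H_1=\mathfrak{O}(\riem_2,\riem_1)H_2$ and using the equality of limiting integrals from the two sides together with the reproducing property of $\partial_w g$ on $\riem_2$ (the paper cites Theorem \ref{th:jump_reflection_invariant} and Theorem \ref{th:Greens_reproducing} where you redo the residue computation by hand). The one genuinely different ingredient is your verification that $\overline{\partial}H_1\in V_1$: the paper establishes this by showing $\iint_{\riem_1}\overline{\partial}_z\partial_w g(w;z,q)\wedge\overline{\partial}H_1(w)=0$ and then pairing against $\overline{A(R)}$ via the reproducing property of $K_R$ and Fubini, whereas you apply Stokes on both sides and Theorem \ref{th:holomorphic_sides_equal} to reduce to $\iint_{\riem_2}\overline{\partial}H_2\wedge\gamma=0$; this is a valid and arguably cleaner argument, and it is in fact the mechanism the paper itself uses later to prove $\mathfrak{O}_e(\riem_1,\riem_2)V_1'=V_2'$. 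The orientation issue you flag is real but harmless: with the paper's conventions one gets $J_q(\Gamma)H_1|_{\riem_2}=-(H_2-H_2(q))$ (compare Proposition \ref{th:jump_dependable}), and since $V_1$ is a linear subspace the sign is absorbed into $\overline{\alpha}$ exactly as you propose.
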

\begin{proof}
   Injectivity of $T(\riem_1,\riem_2)$ is Corollary \ref{co:Tonetwo_injectivity}.

   We show that $T(\riem_1,\riem_2)(V_1) \subseteq A(\riem_2)_e$. If we take $\overline{\alpha} \in V_1$, then  since
  \[ \iint_{\riem_1,w} \partial_{\bar{z}} \partial_w g(w,w_0;z,q) \wedge \overline{\alpha(w)}
      =  0, \]
   for any fixed $q \in \riem_2$ we have (without loss of generality, because $T(\riem_1,\riem_2)$ is
   independent of $q$)
   \begin{align*}
    T(\riem_1,\riem_2) \overline{\alpha}(z)
   & = - \frac{1}{\pi i} \iint_{\riem_1,z} \partial_z \partial_w g(w,w_0;z,q) \wedge \overline{\alpha(w)} \\
   & = - d_z \frac{1}{\pi i} \iint_{\riem_1,z} \partial_w g(w,w_0;z,q) \wedge \overline{\alpha(w)} \in A(\riem_2)_e,
   \end{align*}
   and therefore $T(\riem_1,\riem_2)(V_1) \subseteq A(\riem_2)_e$.

  To show that $T(\riem_1,\riem_2) (V_1)$ contains $A(\riem_2)_e$, let $\beta \in A(\riem_2)_e$, and let $h$ be the unique element of $\mathcal{D}(\riem_2)_q$ such that
  $\partial h = \beta$.  By Theorem \ref{th:transmission_bounded} there is an $H \in \mathcal{D}_{\text{harm}}(\riem_1)$
  such that $h$ and $H$ have the same boundary values on $\Gamma$.  Now $dH = \delta_1 + \overline{\delta_2}$ for $\delta_1, \delta_2 \in A(\riem_1)$ (specifically, $\delta_1 = \partial H$
  and $\overline{\delta_2} = \overline{\partial} H$). Now by Theorem \ref{th:jump_reflection_invariant} we have
  \begin{align*}   
  \beta(z)  & = - { \partial_z} \lim_{\epsilon \searrow 0} \frac{1}{\pi i} \int_{\Gamma^{p_2}_\epsilon} \partial_wg(z,q;w) h(w) \\
  & = -{ \partial_z} \lim_{\epsilon \searrow 0} \frac{1}{\pi i} \int_{\Gamma^{p_1}_\epsilon} \partial_wg(z,q;w) H(w)  \\  
  & = -{ \partial_z}\frac{1}{\pi i} \iint_{\riem_1} \partial_w g(z,q;w) \wedge \overline{\delta_2(w)} \\
  & = - \frac{1}{\pi i}\iint_{\riem_1} \partial_z \partial_w(z,q;w) \wedge \overline{\delta_2(w)} \nonumber
  \end{align*}
  which proves that $A(\riem_2)_e \subseteq \text{Im}(T_R(\riem_1,\riem_2))$.
  Now we need to show
  that $\overline{\partial} H \in V_1$.

Since $h$ is holomorphic by assumption, we have that $\partial h = dh$, hence
  \begin{align*}
    - \frac{1}{\pi i} \iint_{\riem_1}  \partial_z
      \partial_w g(w,w_0;z,q) \wedge \overline{\partial} H(w) & = \partial h(z)  = dh(z)
       \\ &  = - \frac{1}{\pi i} \iint_{\riem_1}  \partial_z
      \partial_w g(w,w_0;z,q) \wedge \overline{\partial} H(w) \\ & \ \  \ \ -
      \frac{1}{\pi i}
       \iint_{\riem_1}  \overline{\partial}_{{z}}
      \partial_w g(w,w_0;z,q) \wedge \overline{\partial} H(w).
  \end{align*}
    Thus
    \begin{equation} \label{eq:integral_against_bergman_zero}
     - \frac{1}{\pi i} \iint_{\riem_1}  \overline{\partial}_{{z}}
      \partial_w g(w,w_0;z,q) \wedge \overline{\partial} H(w) =0
    \end{equation}
  for all $z \in \riem_2$.  If we now let $\overline{\alpha} \in \overline{A(R)}$, then we have

  \begin{align*}
   (\overline{\partial} H, \overline{\alpha})_{\riem_1} & = - \frac{1}{2} \iint_{\riem_1}
    \overline{\partial} H(w) \wedge \alpha(w) \\
    & = - \frac{1}{2} \iint_{\riem_1,w} \overline{\partial} H(w) \wedge_w
    \iint_{R,z} K_R(w;z) \wedge_z \alpha(z) \\
    & = \frac{1}{2} \iint_{R,z} \alpha(z) \wedge_z \iint_{\riem_1,w} \overline{K_R(z;w)}
    \wedge_w \overline{\partial} H(w)
  \end{align*}
  which is zero by (\ref{eq:integral_against_bergman_zero}).  Thus $\overline{\partial} H \in V_1$
  as claimed.
\end{proof}

\begin{remark}  Although we have only proven that $T(\riem_1,\riem_2)$ is injective for quasicircles,
 we conjecture that this is true in greater generality, as in Napalkov and Yulmakhumetov \cite{Nap_Yulm} in the planar case.
 It would also be of interest to give a proof of surjectivity using their approach.  One would use the adjoint identity
 of Theorem \ref{th:T_adjoint} in place of the symmetry of the $L$ kernel, which is used implicitly in their proof.  One would also need to take into account the topological obstacles as we did above.
\end{remark}

 \begin{proposition}  \label{th:exactness_theorem}
  Let $R$ be a compact Riemann surface and let $\Gamma$ be a quasicircle separating $R$
  into  components $\riem_1$ and $\riem_2$. For any $h \in \mathcal{D}_{\mathrm{harm}}(\riem)$
  such that $\overline{\partial} h \in V_1$
  \[ \partial h + T(\riem_1,\riem_1) \overline{\partial} h \in A(\riem_1)_e. \]
 \end{proposition}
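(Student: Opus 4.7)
The plan is to recognize that the expression $\partial h + T(\riem_1,\riem_1)\overline{\partial}h$ is exactly the $\partial$-derivative of the restriction of the jump operator, and then use the fact that the jump of an element in $V_1$ lands in the holomorphic Dirichlet space.

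More precisely, I would first invoke Theorem \ref{th:jump_derivatives}, whose second identity says that on $\riem_1$,
\[
\partial J_q(\Gamma) h(z) = \partial h(z) + T(\riem_1,\riem_1)\overline{\partial} h(z).
\]
Thus it suffices to show that $J_q(\Gamma)_{\riem_1} h \in \mathcal{D}(\riem_1)$, since then $\partial J_q(\Gamma)_{\riem_1} h = d J_q(\Gamma)_{\riem_1} h$ is manifestly exact on $\riem_1$.

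The second step is to observe that $V_1 = \overline{A(\riem_1)} \cap \overline{A(R)}^\perp$ by Definition \ref{def: V and Vprime}, so the hypothesis $\overline{\partial} h \in V_1$ in particular gives $\overline{\partial} h \in \overline{A(R)}^\perp$. Corollary \ref{co:boundedness_and_holomorphicity}(2) then immediately yields $J_q(\Gamma) h \in \mathcal{D}(\riem_1 \cup \riem_2)$, i.e.\ $J_q(\Gamma) h$ is holomorphic on each component. Restricting to $\riem_1$ and combining with the first step gives the desired conclusion.

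There is no real obstacle: the statement is essentially a direct corollary of Theorem \ref{th:jump_derivatives} together with Corollary \ref{co:boundedness_and_holomorphicity}, once one notices that membership in $V_1$ encodes exactly the orthogonality condition needed to kill the $\overline{S}(\riem_1)\overline{\partial} h$ term and render $J_q(\Gamma)h$ holomorphic. The only point worth emphasizing in the write-up is that the choice of $q \in R\setminus\Gamma$ is immaterial because, by Proposition \ref{pr:bunch_o_identities}(1) and the structure of the integral in (\ref{eq:jump_definition}), the dependence on $q$ contributes only an additive constant to $J_q(\Gamma) h$ on each component, which is annihilated by $\partial$.
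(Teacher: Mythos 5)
Your proof is correct, and it is a legitimately shorter route than the one the paper takes. You extract everything from two previously established facts: the identity $\partial J_q(\Gamma)h = \partial h + T(\riem_1,\riem_1)\overline{\partial}h$ on $\riem_1$ (Theorem \ref{th:jump_derivatives}) and the holomorphicity statement of Corollary \ref{co:boundedness_and_holomorphicity}(2), which for $\overline{\partial}h\in V_1\subseteq\overline{A(R)}^\perp$ gives $J_q(\Gamma)h|_{\riem_1}\in\mathcal{D}(\riem_1)$, so that $\partial h + T(\riem_1,\riem_1)\overline{\partial}h = \partial\bigl(J_q(\Gamma)h|_{\riem_1}\bigr) = d\bigl(J_q(\Gamma)h|_{\riem_1}\bigr)$ is exact, and it lies in $A(\riem_1)$ since $\partial h\in A(\riem_1)$ and $T(\riem_1,\riem_1)$ maps boundedly into $A(\riem_1)$. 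The paper instead uses Corollary \ref{co:boundedness_and_holomorphicity} only to reduce the problem to exactness, and then constructs a primitive by hand: it regularizes the kernel by subtracting $\partial_w g_{\riem_1}$, applies Stokes' theorem to define a potential $\omega$ (which is essentially $J_q(\Gamma)h - h$ on $\riem_1$), and computes $\overline{\partial}\omega$ using the $V_1$-orthogonality to kill the $\overline{\partial}_z\partial_w g$ term and the reproducing property of $K_{\riem_1}$ to identify the remaining term, concluding that $T(\riem_1,\riem_1)\overline{\partial}h - \overline{\partial}h = d\omega$ and hence that $\partial h + T(\riem_1,\riem_1)\overline{\partial}h = dh + d\omega$. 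So both arguments rest on the same two ingredients; yours quotes them, while the paper reproves them at the kernel level and thereby exhibits the primitive explicitly. Your closing remark about the choice of $q$ is correct but unnecessary: the conclusion follows for any fixed $q\in R\setminus\Gamma$, since Theorem \ref{th:jump_derivatives} is stated for every such $q$ and the statement of the proposition does not involve $q$ at all.
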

 \begin{proof}
 By Corollary \ref{co:boundedness_and_holomorphicity} we need only show that $\partial h + T(\riem_1,\riem_1) \overline{\partial} h$
 is exact.  As usual let $\Gamma_\epsilon$ be level curves of $g_{\riem_1}$ for fixed $z$.
 Since $L_R$ and hence $T(\riem_1,\riem_1)$ is independent of $q$, we can assume
 that $q \in \riem_2$.
  By Stokes' theorem
  \begin{align*}
     - \frac{1}{\pi i}  &\lim_{\epsilon \rightarrow 0} \int_{\Gamma_{\epsilon}}
    \left( \partial_w g(w;z,q) - \partial_w g_{\riem_1}(w,z)  \right) h(w)  \\
    & = - \frac{1}{\pi i} \iint_{\riem_1} \left( \partial_w g(w;z,q) - \partial_w g_{\riem_1}(w,z) \right) \wedge
    \overline{\partial}  h(w) =: \omega(z).
  \end{align*}
    The  integral on the left hand side exists by \eqref{eq:jump_definition}
    and Theorem \ref{th:Greens_reproducing}.  Thus the right hand side is a well-defined function $\omega(z)$
    on $\riem_1$.

   Thus
   \begin{align*}
    T(\riem_1,\riem_1) \overline{\partial} h (z) & = \partial  \omega(z)  \\
    & = d  \omega(z) - \overline{\partial} \omega(z) \\
    & =  d\omega(z) + \frac{1}{\pi i} \iint_{\riem_1} \partial_{\bar{z}}
    \partial_w g(w;z,q) \wedge \overline{\partial} h(w) - \frac{1}{\pi i} \iint_{\riem_1} \partial_{\bar{z}}
    \partial_w g_{\riem_1}(w,z) \wedge \overline{\partial} h(w) \\
    & = d\omega(z) + 0 + \overline{\partial} h
   \end{align*}
where the middle term vanishes because $\overline{\partial} h \in V_1$, and we have observed that the last term is just
the conjugate of the Bergman kernel applied to $\overline{\partial} h$.
Thus $- \overline{\partial} h + T(\riem_1,\riem_1) \overline{\partial} h$ is exact.  Since $dh = \partial h
+ \overline{\partial} h$ is exact, the claim follows.

 \end{proof}


 The following theorem is in some sense a derivative of the jump decomposition.

 \begin{theorem}  \label{th:derivative_jump_isomorphism}
  Let $R$ be a compact Riemann surface and let $\Gamma$ be a quasicircle separating $R$
  into  components $\riem_1$ and $\riem_2$ and $V_1'$ be given as in \emph{Definition \ref{def: V and Vprime}}.
  \begin{align*}
   \hat{\mathfrak{H}}:V_1' & \rightarrow A(\riem_1)_e \oplus A(\riem_2)_e \\
   dh & \mapsto \left( \partial h + T(\riem_1,\riem_1) \overline{\partial} h,
    -T(\riem_1,\riem_2) \overline{\partial} h \right)
  \end{align*}
  is an isomorphism.
 \end{theorem}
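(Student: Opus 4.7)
The plan is to verify well-definedness, then injectivity, then surjectivity, using the previously established isomorphism Theorem \ref{th:isomorphism_theorem} for $T(\riem_1,\riem_2)|_{V_1}$ together with Proposition \ref{th:exactness_theorem} and Lemma \ref{le:dbar_solvability}. Boundedness will be automatic and the inverse is bounded by the open mapping theorem once bijectivity is established.

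First I would check that $\hat{\mathfrak{H}}$ genuinely lands in $A(\riem_1)_e \oplus A(\riem_2)_e$. If $dh \in V_1'$ then $\overline{\partial}h \in V_1$ by definition. Proposition \ref{th:exactness_theorem} says exactly that $\partial h + T(\riem_1,\riem_1)\overline{\partial}h \in A(\riem_1)_e$, and Theorem \ref{th:isomorphism_theorem} says $T(\riem_1,\riem_2)\overline{\partial}h \in A(\riem_2)_e$. Boundedness follows from the boundedness of $T(\riem_1,\riem_k)$ (Theorem \ref{th:T_boundedness}) and the contraction of $d$ with its components $\partial, \overline{\partial}$.

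For injectivity, suppose $\hat{\mathfrak{H}}(dh)=0$. The second coordinate gives $T(\riem_1,\riem_2)\overline{\partial}h =0$; since $\overline{\partial}h \in V_1$, Corollary \ref{co:Tonetwo_injectivity} forces $\overline{\partial}h=0$. Plugging back into the first coordinate yields $\partial h =0$, so $dh=0$.

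The main work is surjectivity; this is the step I expect to require the most care, since one must solve a $\overline{\partial}$-equation and then correct by a holomorphic exact form. Given $(\alpha_1,\alpha_2) \in A(\riem_1)_e \oplus A(\riem_2)_e$, Theorem \ref{th:isomorphism_theorem} produces a unique $\overline{\beta} \in V_1$ with $T(\riem_1,\riem_2)\overline{\beta}=-\alpha_2$. By Lemma \ref{le:dbar_solvability}, choose any $h_0 \in \mathcal{D}_{\mathrm{harm}}(\riem_1)$ with $\overline{\partial}h_0 = \overline{\beta}$. By Proposition \ref{th:exactness_theorem}, the one-form
\[
\gamma := \alpha_1 - \bigl(\partial h_0 + T(\riem_1,\riem_1)\overline{\beta}\bigr)
\]
is the difference of two elements of $A(\riem_1)_e$, hence lies in $A(\riem_1)_e$. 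So there exists a holomorphic $H \in \mathcal{D}(\riem_1)$ with $dH = \gamma$. Setting $h := h_0 + H$, one has $\overline{\partial}h = \overline{\beta} \in V_1$ (so $dh \in V_1'$), while
\[
\partial h + T(\riem_1,\riem_1)\overline{\partial}h = \partial h_0 + \gamma + T(\riem_1,\riem_1)\overline{\beta} = \alpha_1,
\]
and $-T(\riem_1,\riem_2)\overline{\partial}h = -T(\riem_1,\riem_2)\overline{\beta} = \alpha_2$. Thus $\hat{\mathfrak{H}}(dh)=(\alpha_1,\alpha_2)$, proving surjectivity. The open mapping theorem then guarantees the inverse is bounded, completing the proof that $\hat{\mathfrak{H}}$ is an isomorphism.
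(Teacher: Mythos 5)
Your proposal is correct and follows essentially the same route as the paper: surjectivity via Theorem \ref{th:isomorphism_theorem} to solve $T(\riem_1,\riem_2)\overline{\beta}=-\alpha_2$, Lemma \ref{le:dbar_solvability} to produce a harmonic primitive, and Proposition \ref{th:exactness_theorem} to correct the first component by an exact holomorphic form, with injectivity read off from the second component exactly as in the paper (its injectivity statement is Corollary \ref{co:Tonetwo_injectivity}, which you cite directly). The only difference is cosmetic bookkeeping of the sign and your added remarks on boundedness and the open mapping theorem.
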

 \begin{proof} First we show surjectivity.
  Let $(\alpha,\beta) \in  A(\riem_1)_e \oplus A(\riem_2)_e$.  By Theorem \ref{th:isomorphism_theorem}, $T(\riem_1,\riem_2)$ is surjective so there is a $\overline{\delta}
   \in V_1$
  such that $T(\riem_1,\riem_2) \overline{\delta} =\beta$.  By Lemma \ref{le:dbar_solvability}
  there is a $\tilde{h} \in \mathcal{D}_{\text{harm}}(\riem_1)$ such that $\overline{\partial} \tilde{h} = -\overline{\delta}$.

  Now set  $\mu = \alpha - \partial \tilde{h} - T(\riem_1,\riem_1) \overline{\partial} \tilde{h}$.
  By construction $\mu$ is holomorphic and it is exact by Proposition \ref{th:exactness_theorem}.
  Let $u \in \mathcal{D}(\riem_1)$ be such that $\partial u = \mu$.  Setting $h = \tilde{h} + u$
  we see that
  \begin{align*}
   \hat{\mathfrak{H}}(dh) & = \left( \partial h + T(\riem_1,\riem_1) \overline{\partial} h,
     -T(\riem_1,\riem_2) \overline{\partial} h \right) \\
     & = \left( \partial \tilde{h} + \mu + T(\riem_1,\riem_1) \overline{\partial} \tilde{h},
      -T(\riem_1,\riem_2) \overline{\partial} \tilde{h} \right)\\
      & = (\alpha,\beta).
  \end{align*}
  Thus $\hat{\mathfrak{H}}$ is surjective.

  Now assume that $\hat{\mathfrak{H}}(dh) = 0$.  The vanishing of the second component yields that $-T(\riem_1,\riem_2) \overline{\partial} h =0$, so by Theorem \ref{th:isomorphism_theorem} we have that $\overline{\partial} h =0$. Thus the vanishing of the first component of $\hat{\mathfrak{H}}(dh)$ yields that $\partial h =0$, hence $dh = 0$.
  \end{proof}
\end{subsection}
\begin{subsection}{The jump isomorphism}
 In this section we establish the existence of a jump decomposition for functions in $\mathcal{H}(\Gamma)$.

 \begin{theorem}  Let $R$ be a compact Riemann surface, and let $\Gamma$ be a
 quasicircle separating $R$ into two connected components $\riem_1$ and $\riem_2$.  Fix $q \in \riem_2$ and let $W_1$ be given as in \emph{Definition \ref{def: W}}.
 Then the map
 \begin{align*}
 {\mathfrak{H}}: \mathcal{D}_{\mathrm{harm}}(\riem_1) & \rightarrow \mathcal{D}(\riem_1) \oplus \mathcal{D}(\riem_2)_q \\
  h &\mapsto \left(\left. J_q(\Gamma) h \right|_{\riem_1},\left. J_q(\Gamma) h \right|_{\riem_2} \right)
 \end{align*}
 is a bounded isomorphism from $W_1$ to $\mathcal{D}(\riem_1) \oplus \mathcal{D}(\riem_2)_q$.
 \end{theorem}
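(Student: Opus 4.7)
The plan is to assemble the previously established pieces: the boundedness and holomorphicity result of Corollary \ref{co:boundedness_and_holomorphicity}, the derivative-level isomorphism of Theorem \ref{th:derivative_jump_isomorphism}, and the injectivity of the Schiffer operator on $V_1$ from Corollary \ref{co:Tonetwo_injectivity}. The one ingredient not yet explicit is the identification of $W_1$ in terms of $\overline{\partial}$, and a short calculation showing the map is automatically normalized at $q$.

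First, I would prove that for $h \in \mathcal{D}_{\text{harm}}(\riem_1)$, one has $h \in W_1$ if and only if $\overline{\partial} h \in V_1$. This is an application of Stokes' theorem on the region bounded by $\Gamma_\epsilon^{p_1}$ and shrinking circles around $p_1$: since $\alpha \in A(R)$ is closed and $\partial h \wedge \alpha = 0$ for bidegree reasons,
\[ \lim_{\epsilon \searrow 0} \int_{\Gamma_\epsilon^{p_1}} h\,\alpha = \iint_{\riem_1} \overline{\partial} h \wedge \alpha, \]
which is (up to a constant) the $A_{\text{harm}}(\riem_1)$-inner product of $\overline{\partial} h$ with the restriction of $\overline{\alpha}$; since $\overline{\partial} h \in \overline{A(\riem_1)}$ always, the admissibility condition is exactly $\overline{\partial} h \in V_1$. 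With this in hand, Corollary \ref{co:boundedness_and_holomorphicity} immediately gives both boundedness on $\mathcal{D}_{\text{harm}}(\riem_1)$ and holomorphicity of both components on $W_1$. To confirm the target space, I would note that when $z=q$, the harmonic function $g(w,w_0;q,q)$ vanishes identically on $R$ (it is harmonic in $w$ on all of $R$ and is zero at $w_0$), so $\partial_w g(w;q,q) \equiv 0$, and formula (\ref{eq:J_double_integral_out}) forces $J_q(\Gamma) h(q) = 0$, placing the second component in $\mathcal{D}(\riem_2)_q$.

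For injectivity, suppose $h \in W_1$ and $\mathfrak{H}(h) = 0$. Theorem \ref{th:jump_derivatives} then gives $T(\riem_1,\riem_2) \overline{\partial} h = 0$ on $\riem_2$; since $\overline{\partial} h \in V_1$, Corollary \ref{co:Tonetwo_injectivity} yields $\overline{\partial} h = 0$, after which the $\riem_1$ identity gives $\partial h = 0$, so $h$ is a constant $c$. Applying Theorem \ref{th:Greens_is_Cauchy_kernel} to $h=c$ with $q \in \riem_2$ shows $J_q(\Gamma)c = c$ on $\riem_1$, forcing $c=0$.

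For surjectivity, given $(F_1, F_2) \in \mathcal{D}(\riem_1) \oplus \mathcal{D}(\riem_2)_q$, I would apply the isomorphism $\hat{\mathfrak{H}}$ of Theorem \ref{th:derivative_jump_isomorphism} to the pair $(dF_1, -dF_2)$ to produce $dh \in V_1'$ with $\overline{\partial} h \in V_1$ (hence $h \in W_1$ up to a constant) satisfying $\partial h + T(\riem_1,\riem_1)\overline{\partial} h = dF_1$ and $T(\riem_1,\riem_2)\overline{\partial} h = dF_2$. Combining these with $\overline{\partial} J_q(\Gamma) h = \overline{S}(\riem_1)\overline{\partial} h = 0$ (since $\overline{\partial} h \in V_1 \subset \mathrm{Ker}\,\overline{S}(\riem_1)$) and Theorem \ref{th:jump_derivatives} gives $d(J_q(\Gamma) h|_{\riem_k}) = dF_k$ for $k=1,2$, so $J_q(\Gamma) h|_{\riem_k} = F_k + c_k$. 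Evaluating at $q$ gives $c_2 = 0$, and replacing $h$ by $h - c_1$ (which shifts $J_q(\Gamma) h$ by $c_1$ on $\riem_1$ and by $0$ on $\riem_2$, by the constant calculation in the injectivity step) achieves $\mathfrak{H}(h - c_1) = (F_1, F_2)$. The only real obstacle is the $W_1 \leftrightarrow V_1$ identification in the first step; everything else is an almost mechanical translation of the derivative-level isomorphism, and the constant bookkeeping takes care of itself thanks to the automatic vanishing $J_q(\Gamma) h(q) = 0$.
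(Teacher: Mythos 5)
Your proposal is correct and follows essentially the same route as the paper: reduce to the derivative-level isomorphism $\hat{\mathfrak{H}}$ of Theorem \ref{th:derivative_jump_isomorphism} and handle the additive constants via the normalization $J_q(\Gamma)h(q)=0$, with your explicit Stokes-theorem identification $h \in W_1 \Leftrightarrow \overline{\partial}h \in V_1$ merely spelling out what the paper uses implicitly. One small quibble: for the fact that $J_q(\Gamma)c = c$ on $\riem_1$ (and $0$ on $\riem_2$) it is cleaner to invoke formula (\ref{eq:J_double_integral_in}) with $\overline{\partial}c=0$ rather than Theorem \ref{th:Greens_is_Cauchy_kernel}, which is stated for analytic curves.
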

 \begin{proof}
  By Corollary \ref{co:boundedness_and_holomorphicity} we have that the image of the map is
  in $\mathcal{D}(\riem_1) \oplus \mathcal{D}(\riem_2)$.  Now since $g(w_0,w_0;z,q)=0$ by definition
  of $g$, (\ref{eq:g_interchange_both}) yields that $g(w,w_0;q,q)=0$.  Therefore $\partial_w g(w;q,q)=0$ and so
  \[  J_q(\Gamma) h(q) = -\frac{1}{\pi i} \lim_{\epsilon \rightarrow 0} \int_{\Gamma_\epsilon}
     \partial_w g(w,w_0;q,q) h(w) =0.  \]
Thus the image of the map is in $\mathcal{D}(\riem_1) \oplus \mathcal{D}_q(\riem_2)$.

  By Theorem \ref{th:jump_derivatives} $\partial {\mathfrak{H}} h = \hat{\mathfrak{H}}\, dh$, so since $\hat{\mathfrak{H}}$
 is an isomorphism by Theorem \ref{th:derivative_jump_isomorphism}, we only need to
  deal with constants.  If $J_q(\Gamma) h =0$ then $dh=0$ so $h$ is constant on $\riem_1$.
  Since the second component of $\mathfrak{H} h$ vanishes at $q$ we see that $h=0$, so ${\mathfrak{H}}$
  is injective.  Now observe that ${\mathfrak{H}}(h+c) = {\mathfrak{H}}h +(c,0)$ for any constant $c$.
  Thus surjectivity follows from surjectivity of $\hat{\mathfrak{H}}$.
 \end{proof}

 \begin{proposition} \label{th:jump_dependable} Let $R$ be a compact Riemann surface, and let $\Gamma$ be a quasicircle separating
 $R$ into components $\riem_1$ and $\riem_2$.  Assume that $\Gamma$ is positively oriented with
  respect to $\Gamma_1$.  For $q \in \riem_2$, let $J_q(\Gamma)$ be defined using limiting integrals from within $\riem_1$.
  If $h \in \mathcal{D}(\riem_1)$ then $J_q(\Gamma) h = (h,0)$, and if $h \in \mathcal{D}_q(\riem_2)$
 then $J_q(\Gamma) \mathfrak{O}(\riem_2,\riem_1) h = (0,-h)$.
 \end{proposition}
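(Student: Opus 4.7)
For the first claim, take $h \in \mathcal{D}(\riem_1)$, so that $\overline{\partial} h = 0$. The integral representations (\ref{eq:J_double_integral_out}) and (\ref{eq:J_double_integral_in}) derived in the text then collapse immediately: on $\riem_2$,
\[
  J_q(\Gamma) h(z) \;=\; -\tfrac{1}{\pi i}\iint_{\riem_1} \partial_w g(w;z,q) \wedge \overline{\partial}h(w) \;=\; 0,
\]
while on $\riem_1$ the same double integral is supplemented by the pointwise term $h(z)$, giving $J_q(\Gamma) h(z) = h(z)$. Hence $J_q(\Gamma) h = (h, 0)$.

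For the second claim, the plan is to transfer the limiting integral to the $\riem_2$ side and then evaluate it by a residue computation. Writing $H = \mathfrak{O}(\riem_2,\riem_1) h \in \mathcal{D}_{\mathrm{harm}}(\riem_1)$, and using Theorem \ref{th:jump_reflection_invariant} together with the fact that $\mathfrak{O}(\riem_1,\riem_2)$ and $\mathfrak{O}(\riem_2,\riem_1)$ are mutually inverse transmissions, one has
\[
 J_q(\Gamma) \mathfrak{O}(\riem_2,\riem_1) h \;=\; J_q(\Gamma,\riem_1) H \;=\; J_q(\Gamma,\riem_2)\, \mathfrak{O}(\riem_1,\riem_2) H \;=\; J_q(\Gamma,\riem_2) h.
\]
Thus it suffices to show $J_q(\Gamma,\riem_2) h = (0,-h)$ by direct computation on $\riem_2$.

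Because $h$ is holomorphic on $\riem_2$, the 1-form $\partial_w g(w;z,q)\, h(w)$ is meromorphic on $\riem_2$ with simple poles only at $w=z$ (when $z \in \riem_2$) of residue $-\tfrac{1}{2} h(z)$ and at $w=q$ of residue $+\tfrac{1}{2} h(q) = 0$. I would apply the residue theorem to the core region $K_{2,\epsilon} = \{w \in \riem_2 : g_{\riem_2}(w,p_2) > \epsilon\}$, whose boundary is $\Gamma^{p_2}_\epsilon$, and pass to the limit as $\epsilon \searrow 0$. For $z \in \riem_1$ the only pole inside $K_{2,\epsilon}$ is at $q$ and contributes zero, so $J_q(\Gamma,\riem_2) h(z) = 0$. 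For $z \in \riem_2$ the combined residue is $-\tfrac{1}{2} h(z)$, and combining the definitional factor $-1/(\pi i)$ with the orientation of $\Gamma^{p_2}_\epsilon$ used in Theorem \ref{th:jump_reflection_invariant} (which matches the fixed positive orientation of $\Gamma$ and is therefore opposite to the standard outward boundary orientation of $K_{2,\epsilon}$), one obtains the limiting value $J_q(\Gamma,\riem_2) h(z) = -h(z)$.

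The main technical subtlety, and essentially the only real obstacle, is keeping track of the orientation of $\Gamma^{p_2}_\epsilon$: one must use the convention implicit in Theorem \ref{th:jump_reflection_invariant}, in which $\Gamma^{p_2}_\epsilon$ inherits the orientation of $\Gamma$ (positive with respect to $\riem_1$), and not the standard Stokes orientation of $\partial K_{2,\epsilon}$. It is exactly this reversal that produces the sign $-h$ on the $\riem_2$ component rather than $+h$.
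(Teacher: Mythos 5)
Your proof is correct and takes essentially the same route as the paper: the first claim is the Cauchy/reproducing property of $\partial_w g$ (trivial here since $\overline{\partial}h=0$), and the second transfers the limiting integral to the $\riem_2$ side via Theorem \ref{th:jump_reflection_invariant} and evaluates it by the residue computation that is exactly the content of Theorem \ref{th:Greens_is_Cauchy_kernel}. Your orientation bookkeeping --- that the transferred level curves are negatively oriented with respect to $\riem_2$, which is what produces $-h$ rather than $+h$ --- is precisely the point the paper records in its parenthetical remark.
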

 \begin{proof}  The first claim follows immediately from Theorem \ref{th:Greens_reproducing}.
  The second claim follows from Theorems \ref{th:Greens_reproducing} and  \ref{th:jump_reflection_invariant} (note that $\Gamma$ is negatively oriented with
  respect to $\riem_2$).
 \end{proof}

 We then have a version of the Plemelj-Sokhotski jump formula.
 \begin{corollary}  Let $R$, $\Gamma$, $\riem_1$ and $\riem_2$ be as above.
  Let $H \in \mathcal{H}(\Gamma)$ be such that its extension $h$ to $\mathcal{D}_{\mathrm{harm}}(\riem_1)$
is in $W_1$.   There are unique $h_k \in \mathcal{D}(\riem_k)$, $k=1,2$ so that
  if $H_k$ are their CNT boundary values then $H=- H_2 + H_1$.  These unique $h_i$'s are given by
  \[    h_k = \left. J_q(\Gamma) h \right|_{\riem_k} \]
  for $k=1,2$.
 \end{corollary}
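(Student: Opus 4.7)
The plan is to identify the desired pair with the restrictions $\bigl(\left. J_q(\Gamma) h \right|_{\riem_1}, \left. J_q(\Gamma) h \right|_{\riem_2}\bigr)$ and to read off both halves of the statement from the jump isomorphism theorem proved just above together with Theorem \ref{th:reflection_special_case}.

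For existence, I would set $h_k := \left. J_q(\Gamma) h \right|_{\riem_k}$. The jump isomorphism theorem immediately yields $h_1 \in \mathcal{D}(\riem_1)$ and $h_2 \in \mathcal{D}(\riem_2)_q$. Since $h \in W_1$ by hypothesis, Theorem \ref{th:reflection_special_case} rearranges to
\[ h_1 - \mathfrak{O}(\riem_2, \riem_1)\, h_2 \;=\; h. \]
Passing to CNT boundary values and using that $\mathfrak{O}(\riem_2,\riem_1) h_2$ has the same boundary values on $\Gamma$ as $h_2$ (off a null set), by the very definition of the transmission operator, produces $H_1 - H_2 = H$.

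For uniqueness, suppose $(h_1', h_2')$ is a second admissible pair with $h_k' \in \mathcal{D}(\riem_k)$, $h_2'(q)=0$, and the same boundary decomposition. Setting $d_k := h_k - h_k'$, the CNT boundary values of $d_1$ and $d_2$ coincide on $\Gamma$, so Theorem \ref{th:transmission_equivalences} forces $d_1 = \mathfrak{O}(\riem_2,\riem_1)\, d_2$. I would then evaluate $\mathfrak{H}$ on both sides: first one checks that holomorphic Dirichlet functions lie in $W_1$ — for $\alpha \in A(R)$ the form $d_1\alpha$ is holomorphic on the compact subsurface $\{g_{\riem_1}(\cdot,p_1) \geq \epsilon\}$ with boundary $\Gamma_\epsilon^{p_1}$, so Stokes' theorem gives $\int_{\Gamma_\epsilon^{p_1}} d_1\alpha = 0$; similarly Theorem \ref{th:holomorphic_sides_equal} transfers the integral to $\Gamma_\epsilon^{p_2}$ and shows $\mathfrak{O}(\riem_2,\riem_1)\, d_2 \in W_1$. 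Proposition \ref{th:jump_dependable} then yields $\mathfrak{H} d_1 = (d_1, 0)$ and $\mathfrak{H}(\mathfrak{O}(\riem_2,\riem_1)\, d_2) = (0, -d_2)$. The identification of $d_1$ with $\mathfrak{O}(\riem_2,\riem_1)\, d_2$ forces $(d_1, 0) = (0, -d_2)$, so $d_1 = d_2 = 0$.

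The existence half is essentially a one-line application of Theorem \ref{th:reflection_special_case}, and the main (if mild) obstacle is the uniqueness step: one must verify that holomorphic Dirichlet-bounded functions actually satisfy the admissibility condition defining $W_1$ on both sides, so that the injectivity half of the jump isomorphism can be deployed. This reduces to a Cauchy-theorem computation on the compact region bounded by $\Gamma_\epsilon^{p_k}$, mediated across the quasicircle by Theorem \ref{th:holomorphic_sides_equal}.
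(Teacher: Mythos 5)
Your argument is correct, and it diverges from the paper mainly in the existence half. The paper establishes $h = -\mathfrak{O}(\riem_2,\riem_1)h_2 + h_1$ by computing $\mathfrak{H}\bigl(-\mathfrak{O}(\riem_2,\riem_1)h_2 + h_1\bigr) = (h_1,h_2) = \mathfrak{H}h$ via Proposition \ref{th:jump_dependable} and then appealing to injectivity of the jump map, whereas you read the identity directly off the transmission formula of Theorem \ref{th:reflection_special_case}, which already asserts $h - J_q(\Gamma)_{\riem_1}h = -\mathfrak{O}(\riem_2,\riem_1)J_q(\Gamma)_{\riem_2}h$ for $h \in W_1$; this is more economical, and it sidesteps the injectivity step, which in the paper's version tacitly requires knowing that $-\mathfrak{O}(\riem_2,\riem_1)h_2 + h_1$ is itself admissible --- essentially the check you carry out. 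Your uniqueness argument is in substance the paper's: both reduce to the fact that a Dirichlet-harmonic function on $\riem_1$ with vanishing CNT boundary values is zero, combined with the evaluations of Proposition \ref{th:jump_dependable}. Two small remarks. First, once you have $\mathfrak{H}d_1 = (d_1,0)$ and $\mathfrak{H}\bigl(\mathfrak{O}(\riem_2,\riem_1)d_2\bigr) = (0,-d_2)$, equality of the two arguments already forces $(d_1,0)=(0,-d_2)$, so neither injectivity of $\mathfrak{H}$ on $W_1$ nor your verification that $d_1$ and $\mathfrak{O}(\riem_2,\riem_1)d_2$ lie in $W_1$ is actually needed; the Stokes computation you give is correct, but the shortest route is simply $\overline{\partial}d_1 = 0 \in \overline{A(R)}^\perp$. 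Second, your insertion of the normalization $h_2'(q)=0$ is the right reading of the statement: without it, uniqueness holds only up to adding a common constant to $h_1$ and $h_2$, and the paper's own proof carries the same implicit normalization through the target space $\mathcal{D}(\riem_1)\oplus\mathcal{D}(\riem_2)_q$.
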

 \begin{proof}
 We claim that $h = - \mathfrak{O}(\riem_2,\riem_1) h_2 + h_1$, which would imply that $H = -H_2 + H_1$.
Proposition \ref{th:jump_dependable} yields
  \[ {\mathfrak{H}} (- \mathfrak{O}(\riem_2,\riem_1) h_2 + h_1) = (h_1,h_2) = {\mathfrak{H}} h. \]
  Thus by Theorem \ref{th:derivative_jump_isomorphism} the claim follows.

  We need only show that the solution is unique.  Given any other solution $(\tilde{h}_1,\tilde{h}_2)$
  we have that $-\mathfrak{O}(\riem_2,\riem_1)(\tilde{h}_2 - h_2) + (\tilde{h}_1 - h_1) \in \mathcal{D}_{\text{harm}}(\riem_1)$ has boundary
  values zero, so by uniqueness of the extension it is zero.  Thus
  \[   0= {\mathfrak{H}}\left( -\mathfrak{O}(\riem_2,\riem_1)(\tilde{h}_2 - h_2) + (\tilde{h}_1 - h_1) \right)
    = (\tilde{h}_1- h_1,\tilde{h}_2 - h_2) \]
  which proves the claim.
 \end{proof}

 Finally, we show that the condition for existence of a jump formula is independent of the choice of side of $\Gamma$.

 \begin{theorem}  Let $\Gamma$ be a quasicircle and $V_k$, $V_k'$ be as in \emph{Definition \ref{def: V and Vprime}} and $W_k$ as in \emph{Definition \ref{def: W}}.  Then
  \begin{align*}
   \mathfrak{O}(\riem_1,\riem_2) W_1 & = W_2 \\
   \mathfrak{O}_e (\riem_1,\riem_2) V_1' & = V_2'.
  \end{align*}
 \end{theorem}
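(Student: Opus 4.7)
The plan is to reduce both equalities to Theorem \ref{th:holomorphic_sides_equal} by reinterpreting the defining condition of $W_k$ as orthogonality of $\overline{\partial}h$ against $\overline{A(R)}$, so that $V_k'$ is precisely the image of $W_k$ under $d$.

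First, I would dispatch the statement $\mathfrak{O}(\riem_1,\riem_2)W_1 = W_2$ directly. Given $h \in \mathcal{D}_{\mathrm{harm}}(\riem_1)$ and any $\alpha \in A(R)$, Theorem \ref{th:holomorphic_sides_equal} asserts
\[
   \lim_{\epsilon\searrow 0} \int_{\Gamma^{p_1}_\epsilon} h(w)\,\alpha(w) \;=\; \lim_{\epsilon\searrow 0} \int_{\Gamma^{p_2}_\epsilon} [\mathfrak{O}(\riem_1,\riem_2)h](w)\,\alpha(w).
\]
Since Definition \ref{def: W} characterises membership in $W_k$ by vanishing of exactly such integrals for every $\alpha \in A(R)$, this identity gives $h\in W_1 \Leftrightarrow \mathfrak{O}(\riem_1,\riem_2)h \in W_2$. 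The reverse inclusion follows from the symmetric statement using $\mathfrak{O}(\riem_2,\riem_1)$, which is the two-sided inverse of $\mathfrak{O}(\riem_1,\riem_2)$ by uniqueness of Dirichlet-bounded harmonic extensions.

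Next I would establish the bridge $V_k' = dW_k$. For exact harmonic $\omega = dh = \partial h + \overline{\partial} h$ on $\riem_k$, membership in $V_k'$ means $\overline{\partial}h \in V_k = \overline{A(\riem_k)}\cap \overline{A(R)}^\perp$. For any $\alpha \in A(R)$, applying Stokes' theorem on the exhaustion $\{g_{\riem_k}(\cdot,p_k) > \epsilon\}$ gives
\[
   \lim_{\epsilon\searrow 0} \int_{\Gamma^{p_k}_\epsilon} h(w)\,\alpha(w) \;=\; \iint_{\riem_k} \overline{\partial} h \wedge \alpha,
\]
where the right-hand integral exists because $\overline{\partial}h \in A_{\mathrm{harm}}(\riem_k)$ and $\alpha \in A(R)$ are both $L^2$. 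Using $\ast\alpha = -i\alpha$ for holomorphic $\alpha$, this integral is a nonzero constant multiple of $(\overline{\partial}h,\overline{\alpha})_{A_{\mathrm{harm}}(\riem_k)}$. Therefore the vanishing of these boundary integrals for all $\alpha \in A(R)$, which is the $W_k$-condition, is equivalent to $\overline{\partial} h \perp \overline{A(R)}$, which is the $V_k'$-condition. Every exact harmonic one-form arises as $dh$ for some $h \in \mathcal{D}_{\mathrm{harm}}(\riem_k)$ (unique up to a constant, which is killed by $d$), so $V_k' = dW_k$.

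With these in hand, the second equality is automatic: Definition \ref{defn: transmission of exact} is precisely $\mathfrak{O}_e(\riem_1,\riem_2) \circ d = d \circ \mathfrak{O}(\riem_1,\riem_2)$, whence
\[
  \mathfrak{O}_e(\riem_1,\riem_2) V_1' \;=\; \mathfrak{O}_e(\riem_1,\riem_2)\, dW_1 \;=\; d\, \mathfrak{O}(\riem_1,\riem_2) W_1 \;=\; dW_2 \;=\; V_2'.
\]
I do not expect a genuine obstacle in this argument; the only delicate point is bookkeeping orientations of $\Gamma^{p_k}_\epsilon$ against the conventions fixed in Section \ref{Sec:Schiffer's comparison operators}, but since the $W_k$-condition only asks for vanishing (insensitive to sign) and the Stokes computation merely needs a consistent choice on each side, this is routine.
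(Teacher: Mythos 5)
Your proposal is correct and follows essentially the same route as the paper: the first equality is read off directly from Theorem \ref{th:holomorphic_sides_equal}, and the second rests on the same Stokes computation identifying $\lim_{\epsilon\searrow 0}\int_{\Gamma^{p_k}_\epsilon} h\,\alpha$ with a nonzero multiple of $(\overline{\partial}h,\overline{\alpha})_{A_{\mathrm{harm}}(\riem_k)}$, combined with the fact that transmission of exact forms is transmission of primitives. Your intermediate bookkeeping $V_k'=dW_k$ is just a slightly more explicit packaging of what the paper does in one pass, not a different argument.
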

 \begin{proof}
  The first claim follows immediately from Theorem \ref{th:holomorphic_sides_equal}.   Assume that
  $\overline{\alpha_k} + \beta_k \in A(\riem_k)_e$ for $k=1,2$ are such that
   \[  \mathfrak{O}_e (\riem_1,\riem_2) (\overline{\alpha_1} + \beta_1) =
    \overline{\alpha_2} + \beta_2. \]
  In other words, there are $h_k \in \mathcal{D}_{\text{harm}}(\riem_k)$ such that
  $dh_k = \overline{\alpha_k} + \beta_k$ and $\mathfrak{O}(\riem_1,\riem_2) h_1 = h_2$.  By Stokes'
  theorem, we have that for any $\overline{\alpha} \in \overline{A(R)}$
  \begin{align*}
   \left( \overline{\alpha_k}, \overline{\alpha} \right)_{A_{\text{harm}}}(\riem_k) & =
     \frac{1}{2i} \iint_{\riem_k} \alpha \wedge \overline{\alpha_k} = \frac{1}{2i}
      \iint_{\riem_k} \alpha \wedge dh_k \\
      & = \lim_{\epsilon \searrow 0}  \int_{\Gamma_\epsilon^{p_k}} h_k(w)\, \alpha(w).
  \end{align*}

  Theorem \ref{th:holomorphic_sides_equal} yields that $\overline{\alpha_1} \in V_1$ if and only if
  $\overline{\alpha_2} \in V_2$.
 \end{proof}
\end{subsection}
\end{section}




\begin{thebibliography}{99}
 \bibitem{Ahlfors_Sario} Ahlfors, L. V.; Sario, L.
 {\it  Riemann surfaces}.
   Princeton Mathematical Series, No. 26 Princeton University Press, Princeton, N.J. 1960.
 \bibitem{AskBar} Askaripour, N.; Barron, T. {\it On extension of holomorphic k-differentials on open Riemann surfaces.} Houston J. Math. {\bf 40} (2014), no. 4, 117--1126.
 \bibitem{BergmanSchiffer} Bergman, S.; Schiffer, M. {\it Kernel functions and conformal mapping.} Compositio Math. {\bf 8} (1951), 205--249.
\bibitem{Courant_Schiffer}  Courant, R.  \textit{Dirichlet's principle, conformal mapping, and minimal surfaces.} With an appendix by M. Schiffer. Reprint of the 1950 original. Springer-Verlag, New York-Heidelberg, 1977.
  {\it Kernel functions and conformal mapping.} Compositio Math. {\bf 8}, (1951), 205 -- 249.
\bibitem{DurenZalcman_Schiffer_collected_I} Schiffer, M.  {\it Menahem Max Schiffer}: {\it selected papers}. Vol. 1.  Edited by Peter Duren and Lawrence Zalcman.
  Contemporary Mathematicians. Birkh\"auser/Springer, New York, 2013.
\bibitem{DurenZalcman_Schiffer_collected_II} Schiffer, M.  {\it Menahem Max Schiffer}: {\it selected papers}. Vol. 2. Edited by Peter Duren and Lawrence Zalcman. Contemporary Mathematicians. Birkh\"auser/Springer, New York, 2014.
\bibitem{Kats_survey} Kats, B. A. {\it The Riemann boundary value problem on non-rectifiable curves and related questions}. Complex Var. Elliptic Equ. {\bf 59} (2014) no. 8, 1053--1069.
\bibitem{Kats_2017} Kats, B. A. {\it The Riemann boundary value problem for holomorphic matrices on a nonrectifiable curve.} (Russian) Izv. Vyssh. Uchebn. Zaved. Mat. (2017) no. 2, 22--33; translation in Russian Math. (Iz. VUZ) {\bf 61} (2017) no. 2, 17--27.
\bibitem{Lehto}
Lehto, O. \textit{Univalent functions and {T}eichm\"uller spaces}. Graduate Texts
  in Mathematics, Vol. 109, Springer-Verlag, New York, 1987.
 \bibitem{Neharibook}   Nehari, Z. {\it Conformal mapping}. Reprinting of the 1952 edition. Dover Publications, Inc., New York, 1975.
 \bibitem{Royden}  Royden, H. L. {\it Function theory on compact Riemann surfaces}. J. Analyse Math. 18, 1967, 295--327.
\bibitem{Nap_Yulm} Napalkov, V. V., Jr.; Yulmukhametov, R. S. On the Hilbert transform in the Bergman space. (Russian) Mat. Zametki {\bf 70} (2001), no. 1, 68--78; translation in Math. Notes {\bf 70} (2001), no. 1-2, 61--70.
\bibitem{Pommerenkebook}  Pommerenke, C. {\it{ Univalent functions}}, With a
  chapter on quadratic differentials by Gerd Jensen. Studia
  Mathematica/Mathematische Lehrb\"ucher, Band XXV. Vandenhoeck \& Ruprecht,
  G\"ottingen  1975.
\bibitem{Schiffer_first} Schiffer, M. {\it The kernel function of an orthonormal system}. Duke Math. J. {\bf 13}, (1946). 529 -- 540.
 \bibitem{Schiffer_Spencer} Schiffer, M. and Spencer, D.  {\it Functionals on finite Riemann surfaces}. Princeton University Press, Princeton, N. J., 1954.
 \bibitem{Schippers_Staubach_transmission_sphere} Schippers, E. and Staubach, W. {\it Harmonic reflection in quasicircles and well-posedness of a Riemann-Hilbert problem on quasidisks}. J. Math. Anal. Appl. {\bf 448} (2017), no. 2, 864--884.
 \bibitem{Schippers_Staubach_scattering}   Schippers, E.; and Staubach, W. {\it Riemann boundary value problem on quasidisks, Faber isomorphism and Grunsky operator}. Complex Anal. Oper. Theory {\bf 12} (2018) no. 2, 325--354.
 \bibitem{SchippersStaubach_comparison}  Schippers, E.; and Staubach, W.  {\it Comparison moduli spaces of Riemann surfaces}. Complex analysis and dynamical systems, 231 -- 271, Trends Math., Birkh\"auser/Springer, Cham, 2018.
 \bibitem{Schippers_Staubach_general_transmission} Schippers, E. and Staubach, W.  {\it Transmission of harmonic functions through quasicircles on compact Riemann surfaces}.
  arXiv:1810.02147v2
 \bibitem{Seeley} Seeley, R. T. {\it Singular integrals on compact manifolds}. Amer. J. Math. 81, 1959, 658--690.
 \bibitem{ShenFaber} Shen, Y.  {\it Faber polynomials with applications
to univalent functions with quasiconformal extensions}.  Sci. China Ser. A {\bf 52} (2009), no. 10, 2121–-2131.
\end{thebibliography}
\end{document}